\documentclass[a4paper]{amsart}
\usepackage[usenames,dvipsnames]{color}
\usepackage{amssymb} 
\usepackage{amsmath}
\usepackage{amsthm}
\usepackage[english]{babel}

\allowdisplaybreaks
\vbadness=10001 
\hbadness=10001 

\newtheorem{theorem}{Theorem}[section]
\newtheorem{lem}[theorem]{Lemma}
\newtheorem{rem}[theorem]{Remark}
\newtheorem{defi}[theorem]{Definition}

\newtheorem{corollary}[theorem]{Corollary}

\usepackage{enumerate}   
\usepackage{verbatim}

\usepackage{hyperref}
\hypersetup{
    colorlinks=true,
    citecolor=blue,
    filecolor=black,
    linkcolor=red,
    urlcolor=ForestGreen,
		linktoc=page
}

\numberwithin{equation}{section}

\begin{document}
\title[Convergence of Riemannian $4$-manifolds]{Convergence of Riemannian $4$-manifolds with \\ $L^2$-curvature bounds}
\author{Norman Zergaenge}

\date{\today.}

\begin{abstract}
In this work we prove convergence results of sequences of Riemannian $4$-manifolds with almost vanishing $L^2$-norm of a curvature tensor and a
non-collapsing bound on the volume of small balls. 

In Theorem \ref{th:1}, we consider a sequence of closed Riemannian $4$-manifolds, whose $L^2$-norm of the 
Riemannian curvature tensor tends to zero. Under the assumption of a uniform non-collapsing bound and a uniform diameter bound, we prove that there exists a subsequence that converges 
with respect to the Gromov-Hausdorff topology to a flat manifold.

In Theorem \ref{th:1aa}, we consider a sequence of closed Riemannian $4$-manifolds, whose $L^2$-norm of the 
Riemannian curvature tensor is uniformly bounded from above, and whose $L^2$-norm of the 
traceless Ricci-tensor tends to zero. Here, under the assumption of a uniform non-collapsing bound, which is very close to the euclidean situation, and a uniform diameter bound, we
show that there exists a subsequence which converges in the Gromov-Hausdorff sense to an Einstein manifold.

In order to prove Theorem \ref{th:1} and Theorem \ref{th:1aa}, we use a smoothing technique, which is called $L^2$-curvature flow or $L^2$-flow, introduced by Jeffrey Streets in the series of works 
\cite{streets2008gradient}, \cite{streets2012gradientsmall}, \cite{streets2012gradientround}, \cite{streets2013collapsing}, \cite{streets2013long} and
\cite{streets2016concentration}. In particular, we use his  ``tubular averaging technique'', which he has introduced in \cite[Section 3]{streets2016concentration}, in order to prove distance estimates of the 
$L^2$-curvature flow which only depend on significant geometric bounds. This is the content of Theorem \ref{th:2a}.
\end{abstract}

\maketitle 
\tableofcontents
\section{Introduction and statement of results}

In order to approach minimization problems in Riemannian geometry, it is often useful to know if a minimizing sequence of smooth Riemannian manifolds
contains a subsequence that converges with respect to an appropriate topology to a sufficiently smooth space. Here, in general, the minimization problem refers
to a certain geometric functional, for instance the area functional, the total scalar curvature functional, the Willmore functional or the $L^p$-norm of
a specific curvature tensor on a Riemannian manifold, to name just a few. Latter functionals are the main interest in this work. That means that we consider sequences of
Riemannian manifolds that have a uniform $L^p$-bound on the full curvature tensor, the Ricci tensor and the traceless Ricci tensor respectively.

Naturally, the situation is more transparent, if we have more precise information about the $L^p$-boundedness of curvature tensors of the underlying Riemannian manifolds, that is, that we have
a uniform $L^p$-bound, where $p\in [1,\infty]$ is large. In particular, a uniform $L^{\infty}$-bound should give the most detailled information about geometric quantities.

One of the basic results in this context is stated in \cite[Theorem 2.2, p. 464-466]{anderson1989ricci}. Here, for instance, one assumes a uniform $L^{\infty}$-bound
on the full Riemannian curvature tensor, a uniform lower bound on the injectivity radius and a uniform two sided bound
on the volume, to show the existence of a subsequence that converges with respect to the $C^{0,\alpha}$-topology to a 
Riemannian manifold of regularity $C^{1,\alpha}$. The proof uses the fact, that it is possible to find uniform 
coverings of the underlying manifolds with harmonic charts, which follows from \cite{jost1982geometrische}.

In \cite{yang1992p}, Deane Yang has considered sequences of Riemannian manifolds satisfying a suitable uniform $L^p$-bound on their full Riemannian curvature tensors, where $p>\frac{n}{2}$, and 
a uniform bound on the Sobolev constant. In order to show compactness and diffeomorphism finiteness results, he examines Hamilton's Ricci flow (cf. \cite{hamilton1982three}, \cite{chow2006hamilton} and \cite{topping2006lectures}) and he shows curvature decay estimates and existence time estimates that only depend on the significant geometric bounds. 

In \cite{yang1992convergence} and \cite{yang1992convergenceII}, Deane Yang has approached a slightly more general problem. Here, he has considered sequences of Riemannian
$n$-manifolds, $n\geq 3$, having a uniform $L^{\frac{n}{2}}$-bound on their full Riemannian curvature tensors and a suitable uniform $L^p$-bound on their Ricci tensors instead of a
uniform $L^p$-bound on their full Riemannian curvature tensors, where $p>\frac{n}{2}$. Due to the scale invariance of the bound on the Riemannian curvature tensors - we name such bound a 
``critical curvature bound''- the situation
becomes much more difficult, than in the ``supercritical'' case, that is, when $p$ is bigger than $\frac{n}{2}$.
In particular, in general, it is doubtful whether the global Ricci flow is applicable in this situation.

In \cite{yang1992convergence}, the author has introduced the idea of a
``local Ricci flow'' which is, by definition, equal to the Ricci flow weighted with a truncation function that is compactly contained in a local region of a manifold. 
The author shows that on regions, where the local $L^{\frac{n}{2}}$-norm of the full Riemannian curvature tensor is sufficiently small, the local Ricci flow
satisfies curvature decay estimates and existence time estimates that only depend on significant local geometric bounds. So, on these ``good'', regions one may apply
\cite[Theorem 2.2, pp. 464-466]{anderson1989ricci} to a slightly mollified metric, to obtain local compactness with respect to the $C^{0,\alpha}$-topology.
Since the number of local regions having too large $L^{\frac{n}{2}}$-norm of the full Riemannian curvature tensor is uniformly bounded, the author is able to show
that each sequence of closed Riemannian manifolds, satisfying a uniform diameter bound, a uniform non-collapsing bound on the volume of small balls, 
a uniform bound on the $L^{\frac{n}{2}}$-norm of the full Riemannian curvature tensor and a sufficiently small uniform bound on the $L^{p}$-norm of the Ricci curvature tensor,
where $p>\frac{n}{2}$, contains a subsequence that converges in the Gromov-Hausdorff sense to a metric space, which is, outside of a finite set of points, an open $C^1$-manifold with a Riemannian metric of regularity $C^0$.
 
In \cite{yang1992convergenceII}, the author has used the local Ricci flow to find a suitable harmonic chart around each point in whose neighborhood the 
local $L^{\frac{n}{2}}$-norm of the full Riemannian curvature tensor and the local $L^{p}$-norm of the full Riemannian curvature tensor,
where $p>\frac{n}{2}$, is not too large. Using these estimates, the author is able to improve the statements about the convergence behavior in the convergence results in \cite{yang1992convergence}
on regions having a sufficiently small curvature concentration.

It seems so, that the reliability of the Ricci flow in \cite{yang1992p}, and the local Ricci flow in \cite{yang1992convergence} and \cite{yang1992convergenceII} is based
on the appearance of the supercritical curvature bounds. For instance, in order to develop
the parabolic Moser iteration in \cite{yang1992p} and \cite{yang1992convergence} one uses a well-controlled behavior of the Sobolev constant. As shown in \cite[7, pp. 85-89]{yang1992convergence} this
behavior occurs, if one assumes suitable supercritical bounds on the Ricci curvature. The examples in \cite[Section 9, pp. 690-694]{aubry2007finiteness} show that the critical case is completely different.

Another important issue is the absence of important comparison geometry results under critical curvature bounds.
In order to understand the rough structure of Riemannian manifolds, satisfying a fixed lower bound on the Ricci tensor, one uses the well-known ``Bishop-Gromov volume comparison theorem'' \index{Bishop-Gromov volume comparison theorem}\, 
(cf. \cite[9.1.2., pp. 268-270]{petersen2006riemannian}) which allows a one-directed volume comparison of balls in Riemannian manifolds
satisfying a fixed lower Ricci curvature bound with the volume of balls in a such called ``space form'', (cf. \cite[p. 206]{lee1997riemannian})\index{space form}, which is 
a complete, connected Riemannian manifold with constant sectional curvature. Later, in \cite{petersen1997relative}, Peter Petersen and Guofang Wei have shown that it is possible to 
generalize this result to the situation, in that an $L^p$-integral of some negative part of the Ricci tensor is sufficiently small. Here the authors assume that $p$ is 
bigger that $\frac{n}{2}$. 

It seems that the treatment of Riemannian manifolds with pure critical curvature bounds needs to be based on methods that are different from the approaches we have just mentioned.
Instead of considering the Ricci flow,
which is closely related to the gradient flow of the Einstein-Hilbert functional (cf. \cite[Chapter 2, Section 4, pp. 104-105]{chow2006hamilton}), 
one could try to deform a Riemannian manifold of dimension $4$ into the direction
of the negative gradient of the $L^2$-integral of the full curvature tensor, in order to analyze slightly deformed approximations of the initial metric, having a smaller curvature energy
concentration. This evolution equation was examined by 
Jeffrey Streets in \cite{streets2008gradient}, \cite{streets2012gradientsmall}, \cite{streets2012gradientround}, \cite{streets2013collapsing}, \cite{streets2013long},
\cite{streets2016concentration}. In this series of works, J. Streets has proved a plenty of properties of this geometric flow and he also shows a couple of applications.

Using J. Streets technique, we show 
compactness results for Riemannian $4$-manifolds, that only assume a uniform diameter bound, a uniform non-collapsing bound on the volume of sufficiently small balls
and critical curvature bounds.

In the first theorem, we consider a sequence of Riemannian $4$-manifolds having almost vanishing Riemannian curvature tensor 
in some rough sense and we show that a subsequence converges with respect to the Gromov-Hausdorff topology to a flat Riemannian manifold:
 \begin{theorem}\label{th:1}
Given $D,d_0>0$,  $\delta\in(0,1)$ and let $(M_i,g_i)_{i\in\mathbb{N}}$ be a sequence of closed Riemannian 4-manifolds, satisfying the following assumptions:
\begin{alignat}{2}
\notag d_0\leq diam_{g_i}(M_i) &\leq D &&\forall i\in\mathbb{N} \\
\notag  Vol_{g_i}(B_{g_i}(x,r)) &\geq \delta\omega_4 r^4\hspace{1cm} && \forall i\in\mathbb{N},\, x\in M_i,\, \forall r\in [0,1] \\
\label{eq:2.1+}
\left\Vert Rm_{g_i} \right\Vert_{L^2(M_i,g_i)}&\leq \frac{1}{i}\hspace{1cm} &&\forall i\in\mathbb{N}
\end{alignat}
then, there exists a subsequence $(M_{i_j},d_{g_{i_j}})_{j\in\mathbb{N}}$ that converges in the Gromov-Haus\-dorff sense to a smooth flat manifold $(M,g)$.
\end{theorem}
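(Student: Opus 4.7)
The plan is to use Streets' $L^2$-curvature flow as a smoothing device to convert the critical integral curvature control into pointwise curvature control, and then to invoke a standard compactness theorem (Anderson-type) on the smoothed sequence; the key new ingredient that makes this strategy work is the distance estimate of Theorem \ref{th:2a}, which guarantees that the smoothing process itself only moves points a controlled amount, so Gromov-Hausdorff convergence of the smoothed sequence transfers to the original sequence.

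More concretely, I would first fix $i$ large and evolve $(M_i,g_i)$ by the $L^2$-gradient flow introduced by Streets. Under the non-collapsing assumption and the uniform smallness $\|Rm_{g_i}\|_{L^2} \le 1/i$, the $\varepsilon$-regularity results of Streets (and the monotonicity/non-increase of the $L^2$-energy along the flow) provide a lower bound on the maximal existence time $T_i$ together with a curvature decay estimate of the form $\sup_{M_i}|Rm_{g_i(t)}|^2 \le C \, \|Rm_{g_i}\|_{L^2}^{2}/t^{2}$ on a suitable time interval, where $C$ depends only on $\delta$. Choosing a time scale $t_i = t_i(i) \to 0$ carefully, one arranges that the smoothed metric $\tilde g_i := g_i(t_i)$ satisfies $\|Rm_{\tilde g_i}\|_{L^\infty} \to 0$, while the diameter and non-collapsing bounds are essentially preserved (up to constants depending only on $D,\delta,d_0$) by standard flow estimates plus Theorem \ref{th:2a}.

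Next I would apply Anderson's $C^{1,\alpha}$-compactness theorem (or equivalently Cheeger-Gromov compactness with $L^\infty$-curvature bound, lower injectivity radius bound from non-collapsing plus curvature, and diameter bound) to extract a subsequence of $(M_i,\tilde g_i)$ converging in the $C^{1,\alpha}$ topology, and hence in the Gromov-Hausdorff sense, to a limit Riemannian manifold $(M,g)$. Because $\|Rm_{\tilde g_i}\|_{L^\infty}\to 0$, the limiting curvature tensor vanishes, so $(M,g)$ is flat; elliptic regularity in harmonic coordinates then upgrades $(M,g)$ to a smooth flat manifold. Finally, the distance estimate of Theorem \ref{th:2a} applied to the pair $(g_i,\tilde g_i)$ gives $d_{GH}\bigl((M_i,g_i),(M_i,\tilde g_i)\bigr) \to 0$, so by the triangle inequality for $d_{GH}$ the same subsequence of $(M_i,g_i)$ converges in Gromov-Hausdorff topology to the flat $(M,g)$.

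The main obstacle, and really the heart of the argument, is ensuring that in the critical $L^2$-regime the smoothing step does not distort distances in an uncontrolled way: naive bounds on $|\partial_t g|$ involve pointwise curvature, which on a fixed time slice is at best of size $\|Rm\|_{L^2}/t$, and integrating this in time leads to divergent distance distortion. Overcoming this requires precisely the tubular averaging estimate of Theorem \ref{th:2a}, which bounds the distance change by a quantity depending only on $\delta$, the initial $L^2$-curvature and the time interval. All remaining steps (existence time, curvature decay, preservation of non-collapsing, and standard Cheeger-Gromov-Anderson compactness) are essentially off-the-shelf once Theorem \ref{th:2a} is in hand.
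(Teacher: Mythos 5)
Your proposal is correct in outline and uses exactly the paper's ingredients: Streets' existence/decay theorem (Theorem \ref{strflow}) to smooth each $(M_i,g_i)$ by the $L^2$-flow, Theorem \ref{th:2a} (via Corollary \ref{GHclose}) to keep $d_{GH}((M_i,g_i),(M_i,g_i(t)))$ small, Anderson compactness for the smoothed metrics, and the triangle inequality to transfer GH convergence back. The difference is organizational: you run a single diagonal $t_i\to 0$ and apply compactness directly to $(M_i,g_i(t_i))$, whereas the paper fixes a sequence of times $t_j$ and, for each fixed $j$, lets $i\to\infty$; at fixed $t_j$ the flow's injectivity radius bound $\iota t_j^{1/4}$ and the derivative estimates \eqref{boundcurv} are uniform in $i$, so Anderson gives $C^{m,\alpha}$ convergence for every $m$, the limits $(N_j,h_j)$ are manifestly flat, a second limit in $j$ is taken using \cite[Theorem 4.7]{cheeger1982finite} applied to these flat manifolds, and only then does one diagonalize in GH distance. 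Your route is workable but shifts onto you precisely the two points this ordering makes trivial: (i) along $t_i\to 0$ the flow's injectivity radius bound $\iota t_i^{1/4}$ and the bounds $C(m)t_i^{-(2+m)/4}$ on $\nabla^m Rm$ degenerate, so the injectivity radius lower bound must instead come from non-collapsing plus $\Vert Rm_{g_i(t_i)}\Vert_{L^\infty}\to 0$ (Cheeger--Gromov--Taylor), and the persistence of the non-collapsing bound to time $t_i$ is not a ``standard flow estimate'' --- it needs Theorem \ref{th:2a} combined with the volume estimate of Lemma \ref{L2volestimate}; (ii) with only an $L^\infty$ curvature bound you get $C^{1,\alpha}$ (weak $W^{2,p}$) compactness, so flatness and smoothness of the limit require the harmonic-coordinate/weak-convergence bootstrap you allude to, rather than being read off from smooth convergence as in the paper. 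Finally, the decay estimate has the form $\Vert Rm_{g(t)}\Vert_{L^\infty}\le A\,\mathcal{F}^{1/6}_{g(t)}t^{-1/2}$, not $\Vert Rm\Vert_{L^2}\,t^{-1}$; this is harmless, since either version lets you choose $t_i\to 0$ with $\Vert Rm_{g_i(t_i)}\Vert_{L^\infty}\to 0$, but the exponents you quote are not the ones available.
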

Throughout, a closed Riemannian\index{closed Riemannian manifold} is defined to be a smooth, compact and connected oriented Riemannian manifold without boundary.

In the second theorem, we consider a sequence of Riemannian $4$-manifolds with uniformly bounded curvature energy and almost vanishing traceless Ricci tensor in 
some rough sense. Under these assumptions, we show
 that a subsequence converges with respect to the Gromov-Hausdorff topology to an Einstein manifold, provided that the volume of small balls behaves almost euclidean:
\begin{theorem}\label{th:1aa}
Given $D,d_0,\Lambda>0$, there exists a universal constant $\delta\in(0,1)$ close to $1$ so that if 
$(M_i,g_i)_{i\in\mathbb{N}}$ is a sequence of closed Riemannian 4-manifolds
 satisfying the following assumptions:
\begin{alignat*}{2}
d_0\leq  diam_{g_i}(M_i) &\leq D &&\forall i\in\mathbb{N} \\
\Vert Rm_{g_i} \Vert_{L^2(M_i,g_i)}&\leq \Lambda\hspace{1cm} &&\forall i\in\mathbb{N}\\
\Vert \mathring{Rc}_{g_i} \Vert_{L^2(M_i,g_i)}&\leq \frac{1}{i}\hspace{1cm} &&\forall i\in\mathbb{N}\\
 Vol_{g_i}(B_{g_i}(x,r)) &\geq \delta\omega_4 r^4\hspace{1cm} && \forall i\in\mathbb{N},\, x\in M_i,\, r\in [0,1]
\end{alignat*}
then there exists a subsequence $(M_{i_j},d_{g_{i_j}})_{j\in\mathbb{N}}$ that converges in the Gromov-Haus\-dorff sense to a smooth Einstein manifold $(M,g)$.
\end{theorem}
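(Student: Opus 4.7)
The plan is to imitate the proof of Theorem \ref{th:1}: regularize each initial metric $g_i$ by Streets' $L^2$-curvature flow for a short, uniform time $T=T(D,d_0,\Lambda,\delta)>0$; extract a smoothly convergent subsequence from the smoothed metrics $(M_i,g_i(T))$ by Cheeger-Gromov compactness; and then transfer Gromov-Hausdorff convergence back to the original sequence via the distance estimate in Theorem \ref{th:2a}. The novelty relative to Theorem \ref{th:1} is that $\|Rm_{g_i}\|_{L^2}$ is only uniformly bounded rather than infinitesimally small, so the smoothed limit will generically not be flat, and the hypothesis $\|\mathring{Rc}_{g_i}\|_{L^2}\to 0$ has to be used separately to upgrade the limit to an Einstein manifold.

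For the smoothing step, the near-Euclidean non-collapsing (with the universal $\delta$ close to $1$) combined with $\|Rm_{g_i}\|_{L^2}\leq \Lambda$ forces the local curvature concentration $\int_{B_{g_i}(x,r)}|Rm_{g_i}|^2\,dV_{g_i}$ to be as small as desired on balls of a uniform radius, which is precisely the smallness hypothesis needed in Streets' $\varepsilon$-regularity theory for the $L^2$-flow (cf.\ \cite{streets2008gradient,streets2012gradientsmall,streets2016concentration}). This theory then produces a uniform existence time $T>0$ and uniform pointwise bounds $|\nabla^k Rm_{g_i(T)}|\leq C_k(D,d_0,\Lambda,\delta)$ for all $k\geq 0$. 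Together with the preservation of non-collapsing along the flow (so that the injectivity radius at $t=T$ is uniformly positive) and the displacement estimate of Theorem \ref{th:2a} (which keeps the diameter in a fixed interval), Cheeger-Gromov compactness yields a subsequence $(M_{i_j},g_{i_j}(T))$ that converges $C^{\infty}$-smoothly, modulo diffeomorphisms, to a smooth closed Riemannian $4$-manifold $(M,\tilde g)$.

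To identify $(M,\tilde g)$ as Einstein, I would track the quantity $E_i(t):=\|\mathring{Rc}_{g_i(t)}\|_{L^2(M_i,g_i(t))}^2$ along the flow. Using that $\|Rm_{g_i(t)}\|_{L^2}$ is non-increasing in $t$ (the flow is the negative $L^2$-gradient flow of $\int |Rm|^2\,dV$) and that the pointwise curvature bounds of the previous step make every term in the evolution equation of $\mathring{Rc}$ integrable, one obtains a Gr\"onwall-type inequality
\[
E_i(T)\leq \Phi(T,D,\Lambda,\delta)\,E_i(0)+o(1)\longrightarrow 0.
\]
Smooth convergence of $g_{i_j}(T)$ to $\tilde g$ then forces $\mathring{Rc}_{\tilde g}\equiv 0$, so $\tilde g$ is Einstein. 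Finally, Theorem \ref{th:2a} gives $d_{GH}((M_i,d_{g_i}),(M_i,d_{g_i(T)}))\to 0$, and combining this with the previous Gromov-Hausdorff convergence via the triangle inequality proves $(M_{i_j},d_{g_{i_j}})\to(M,d_{\tilde g})$.

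The principal obstacle I expect is the Einstein identification. Since the $L^2$-flow is not the gradient flow of $\|\mathring{Rc}\|_{L^2}^2$, no a priori monotonicity of $E_i(t)$ is available, and every cross term in its evolution equation has to be absorbed using a mixture of the bounded full curvature energy and the small traceless-Ricci energy, rather than by a single smallness assumption as in Theorem \ref{th:1}. A secondary, more technical obstacle is verifying that the near-Euclidean non-collapsing condition that activates $\varepsilon$-regularity is itself propagated in a strong enough form along the flow on $[0,T]$; this should follow from Streets' non-collapsing and pseudolocality estimates after choosing $\delta$ sufficiently close to $1$ and $T$ sufficiently small.
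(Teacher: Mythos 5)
Your overall architecture (smooth with the $L^2$-flow, extract a smooth limit, transfer back with Theorem \ref{th:2a}) is the paper's, but two of your three main steps rest on claims that fail. For the smoothing step, it is not true that the almost-Euclidean non-collapsing together with $\Vert Rm_{g_i}\Vert_{L^2}\leq\Lambda$ forces the local curvature concentration to be small at a uniform scale: a tiny localized wrinkle carries $O(1)$ scale-invariant $L^2$-curvature energy while altering volumes of balls only negligibly, so no $\epsilon$-regularity hypothesis is activated by these two assumptions alone. The uniform existence time and the bounds $\Vert Rm_{g(t)}\Vert_{L^{\infty}}\leq Kt^{-1/2}$, $inj_{g(t)}\geq t^{1/4}$, $diam_{g(t)}\leq 2(1+D)$ (Theorem \ref{existenceII}) already require the smallness of $\Vert\mathring{Rc}\Vert_{L^2}$: they are proved by a rescaling/blow-up contradiction in which the limit is Ricci-flat with bounded energy, and rigidity (a curvature bound for Ricci-flat manifolds with $inj\geq 1$, respectively Anderson's gap lemma when the volume ratio is close to $1$, which is where the universal $\delta$ enters) yields the contradiction; the needed ball inclusion is itself proved with the tubular-averaging estimates. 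For the Einstein identification, the ``principal obstacle'' you describe does not exist: by the generalized Gauss--Bonnet formula in dimension four, $\int_M|Rm|^2dV=c_0\pi^2\chi(M)+4\int_M|\mathring{Rc}|^2dV$, hence $\text{grad }\mathcal{F}=4\,\text{grad }\mathcal{G}$ with $\mathcal{G}(g)=\int_M|\mathring{Rc}_g|^2dV_g$, so the $L^2$-flow \emph{is} (up to the factor $4$) the gradient flow of the traceless-Ricci energy and $\mathcal{G}(g_i(t))\leq\mathcal{G}(g_i(0))\leq 1/i$ is automatic. Your substitute Gr\"onwall argument is not worked out and is doubtful: expanding the evolution of $\mathring{Rc}$ produces terms of order $\nabla^2\text{grad }\mathcal{F}\sim t^{-3/2}$ near $t=0$, which the available decay $\Vert\nabla^mRm\Vert_{L^{\infty}}\lesssim t^{-\frac{2+m}{4}}$ does not make time-integrable.

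The transfer-back step is also wrong as stated. For a fixed smoothing time $T$, Theorem \ref{th:2a} bounds the distortion by $C(K,\iota,D)\Lambda^{1/2}T^{1/16}+C(K,\iota,D)T^{1/24}$, a constant independent of $i$; it does \emph{not} give $d_{GH}((M_i,d_{g_i}),(M_i,d_{g_i(T)}))\to0$ as $i\to\infty$. One must instead work at a sequence of times $t_j\downarrow 0$ (Corollary \ref{GHclose}), take smooth limits of the flows at each $t_j$ (the paper uses Streets' flow compactness, after first reducing to a single underlying manifold via a diffeomorphism-finiteness corollary), observe that the limit flow is static because its traceless Ricci vanishes and hence $\text{grad }\mathcal{F}=4\,\text{grad }\mathcal{G}=0$ --- so the limits at all times $t_j$ coincide and give one Einstein manifold $(M,g)$ --- and finally conclude by a diagonal argument. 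These missing ingredients (the role of $\Vert\mathring{Rc}\Vert_{L^2}$ in the existence theorem, the Gauss--Bonnet gradient identity, and the $t_j\downarrow0$ diagonal scheme) are the substance of the paper's proof, so the proposal as written has genuine gaps.
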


As mentioned above, it is our aim to show these results, using the negative gradient flow of the following functional:
\begin{equation}
\label{eq:2.4}
\mathcal{F}(g):=\int_{M}{|Rm_{g}|_g^2\, dV_g}
\end{equation}

That is, on a fixed sequence element $(M^4,g_0)$, we want to evolve the initial metric in the following manner:

\begin{equation}
\label{eq:2.5}
\begin{cases}
\frac{\partial}{\partial t}g&=-\text{grad }\mathcal{F}=-2\delta d Rc_g +2\check{R}_{g} -\frac{1}{2} |Rm_{g}|^2_{g} g \\
g(0)&=g_0
\end{cases}
\end{equation}
where $\check{R}_{ij}:=R_i^{pqr}R_{jpqr}$ in local coordinates and the gradient formula, which appears in \eqref{eq:2.5}  can be found in \cite[Chapter 4, 4.70 Proposition, p. 134]{besse2007einstein}. Here,
$d$ denotes the exterior derivative \index{exterior derivative} acting on the Ricci tensor 
and $\delta$ denotes the adjoint of $d$.\index{exterior derivative>adjoint of the $\sim$} The gradient of a differentiable Riemannian functional is defined in \cite[Chapter 4, 4.10 Definition, p. 119]{besse2007einstein}.

In \cite[Theorem 3.1, p. 252]{streets2008gradient} J. Streets has proved short time existence of the flow given by \eqref{eq:2.5} on closed Riemannian manifolds. The author has also proved the uniqueness of the flow (cf. \cite[Theorem 3.1, p. 252]{streets2008gradient}). In this regard, the
expression ``the'', $L^2$-flow makes sense. In \cite[Theorem 1.8, p. 260]{streets2016concentration} J. Streets has proved, that
under certain assumptions, the flow given by \eqref{eq:2.5} has a solution on a controlled time interval and the solution satisfies certain
curvature decay and injectivity radius growth estimates.

In Section \ref{sec:2}, we use J. Streets ideas, in order to show that, under certain assumptions, the distance between two points does not change too much along the flow. This allows us
to bring the convergence behavior of a slightly mollified manifold back to the initial sequence. That means we will prove the following theorem:
\pagebreak
\begin{theorem}\label{th:2a}
Let $(M^4,g_0)$ be a closed Riemannian 4-manifold. Suppose that $(M,g(t))_{t\in[0,1]}$ is a solution to \eqref{eq:2.5} satisfying the following assumptions:
\begin{alignat}{2}
\label{eq:2.6-} \int_{M}{|Rm_{g_0}|_{g_0}^2\, dV_{g_0}}&\leq \Lambda && \\
\label{eq:2.6} \Vert Rm_{g(t)}\Vert_{L^{\infty}(M,g(t))}&\leq K t^{-\frac{1}{2}} &&\hspace{0.25cm}\forall t\in (0,1]\\
\label{eq:2.6a}
inj_{g(t)}(M)&\geq \iota t^{\frac{1}{4}} &&\hspace{0.25cm}\forall t\in [0,1]\\
\label{eq:2.6b}
diam_{g(t)}(M)&\leq 2(1+D) &&\hspace{0.25cm}\forall t\in [0,1]
\end{alignat}
Then we have the following estimate:
\begin{equation}\label{disex}
|d(x,y,t_2)-d(x,y,t_1)|\leq C(K,\iota,D)\Lambda^{\frac{1}{2}}\left(t_2^{\frac{1}{8}}-t_1^{\frac{1}{8}}\right)^{\frac{1}{2}} +C(K,\iota,D) \left( t_2^{\frac{1}{24}}-t_1^{\frac{1}{24}} \right)
\end{equation}
for all $t_1,t_2\in [0,1]$ where $t_1<t_2$.
\end{theorem}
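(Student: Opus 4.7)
The starting point is the standard length-variation identity: for any piecewise smooth curve $\gamma$ joining $x$ to $y$,
\[
\frac{d}{dt}L_{g(t)}(\gamma)=\tfrac{1}{2}\int_{\gamma}\frac{h(\gamma',\gamma')}{|\gamma'|_{g(t)}}\,ds,\qquad h:=\partial_t g.
\]
Taking $\gamma$ to be a $g(t_1)$-minimizing geodesic gives $d(x,y,t_2)-d(x,y,t_1)\le\tfrac{1}{2}\int_{t_1}^{t_2}\int_{\gamma}|h|_{g(t)}\,ds\,dt$, and a $g(t_2)$-minimizer yields the symmetric lower bound; the hypotheses \eqref{eq:2.6} and \eqref{eq:2.6a} ensure that the various $g(t)$-norms and lengths along such a geodesic are comparable up to constants depending on $K$, $\iota$, $D$. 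The whole problem therefore reduces to estimating the spacetime integral $\mathcal{I}:=\int_{t_1}^{t_2}\int_{\gamma_t}|h|_{g(t)}\,ds\,dt$ by a quantity of the form prescribed in \eqref{disex}.

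The main tool is Streets' tubular averaging technique. For each $t$ I surround $\gamma_t$ with a normal tube $T_{r(t)}(\gamma_t)$ of radius $r(t):=c_0\,\iota\,t^{1/4}$, with $c_0$ small; by \eqref{eq:2.6}, \eqref{eq:2.6a} and Jacobi-field comparison, the normal exponential map is a diffeomorphism onto $T_{r(t)}(\gamma_t)$ with Jacobian uniformly close to $1$. The averaging lemma converts a line integral into a tubular volume integral,
\[
\int_{\gamma}|f|\,ds\le \frac{C}{r^{3}}\int_{T_r(\gamma)}|f|\,dV+\text{(error involving }\nabla f\text{)},
\]
so the length-integral of $h$ is replaced by a 4-dimensional integral accessible to $L^2$-type bounds on curvature. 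I then split $h=-2\,\delta d\,Rc_{g(t)}+Q$ with $|Q|\le C|Rm_{g(t)}|^2$.

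For the quadratic piece $Q$, H\"older in the tube combined with \eqref{eq:2.6}, the preservation of the $L^2$-energy $\|Rm_{g(t)}\|_{L^2}^2\le\mathcal{F}(g_0)\le\Lambda$ along the gradient flow, and the volume bound $\mathrm{Vol}(T_r)\le C\,r^{3}\,\mathrm{diam}$, yields a pointwise-in-$t$ estimate
\[
\tfrac{1}{r^{3}}\!\int_{T_r}\!|Q|\,dV\le C(K,D)\,\Lambda^{1/2}\,r^{-3/2}\,t^{-1/2}.
\]
Pairing this against Cauchy--Schwarz in $t$ with the gradient-flow energy identity $\int_{t_1}^{t_2}\|h\|_{L^2}^{2}\,dt\le\Lambda$, and inserting $r(t)=c_0\iota t^{1/4}$, produces the first term on the right of \eqref{disex}, namely $C(K,\iota,D)\,\Lambda^{1/2}(t_2^{1/8}-t_1^{1/8})^{1/2}$.

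For the principal higher-order piece $-2\,\delta d\,Rc$, I extend $\gamma'$ to a vector field $X$ on the tube by parallel transport along the normal geodesics, so that $|\nabla X|$ and $|\nabla^{2}X|$ are controlled pointwise in terms of $|Rm|$ via Jacobi-field identities. Integrating by parts twice on the tube rewrites $\int_{T_r}\delta d\,Rc(X,X)\,dV$ as volume terms of the form $\int_{T_r}Rc\cdot\nabla^{2}(X\otimes X)\,dV$ plus surface integrals on $\partial T_r$, each bounded using \eqref{eq:2.6} together with Shi-type derivative estimates $|\nabla Rm|\le C(K)\,t^{-1}$ derived from it. Time-integrating these contributions together with the averaging error, and optimizing over the choice $r(t)=c_0\iota t^{1/4}$, delivers the second, $\Lambda$-independent term $C(K,\iota,D)(t_2^{1/24}-t_1^{1/24})$. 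The main obstacle will be this double integration by parts: one has to simultaneously control the Jacobian of the normal exponential map, the covariant derivatives of the extension $X$, and the surface integrals on $\partial T_r$, and match the exponents produced by these various pieces so that the result stays finite as $t_1\to 0$. It is precisely this matching that pins down the exponents $1/8$ and $1/24$ in \eqref{disex}.
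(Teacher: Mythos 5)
Your overall flavor (tubular averaging, Shi-type derivative bounds, the spacetime energy identity) is the right one, but two of your central steps do not work as stated. First, the reduction to a single fixed minimizing geodesic: you claim that \eqref{eq:2.6} and \eqref{eq:2.6a} make the $g(t)$-lengths and norms along a $g(t_1)$-minimizer comparable over all of $[t_1,t_2]$. They do not. Comparability would require a bound on $\int_{t_1}^{t_2}\Vert g'(t)\Vert_{L^{\infty}}\,dt$, and from \eqref{eq:2.5} together with \eqref{eq:2.6} and the derivative estimates \eqref{derbound} one only gets $|\mathrm{grad}\,\mathcal{F}|\lesssim t^{-1}$, which is not integrable down to $t_1=0$; this is exactly the obstruction the paper points out at the start of Section \ref{sec:2}. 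Without comparability, a curve that is minimizing (and geodesic, unit speed) at time $t_1$ need not be almost-minimizing or almost-unit-speed with respect to $g(t)$ at later times, and those are precisely the hypotheses \eqref{almlg}--\eqref{almunit} needed in Lemma \ref{tube} to obtain the foliated tube and the projection bound \eqref{dpi} on which any tubular averaging rests. The paper's way around this is the $\beta$-quasi-forward/backward geodesics of Lemma \ref{exforw}: the minimizing geodesic is re-chosen on a fine time partition (with step $S$ depending on the flow, harmless because the final estimate telescopes), so that on each subinterval the curve stays admissible for Lemma \ref{tube}. Your proposal has no substitute for this mechanism.

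Second, your treatment of the principal term $-2\,\delta d\,Rc$ by extending $\dot\gamma$ and integrating by parts twice on the tube is not carried out, and its advertised outcome is implausible with your choices. Your quoted Shi bound $|\nabla Rm|\le C t^{-1}$ is wrong ($m=1$ in \eqref{derbound} gives $t^{-3/4}$; $|\nabla\,\mathrm{grad}\,\mathcal{F}|\le Ct^{-5/4}$), and with your tube radius $r(t)\sim\iota t^{1/4}$ the oscillation/boundary errors scale like $r(t)\cdot t^{-5/4}\sim t^{-1}$, which diverges logarithmically as $t_1\to 0$; the interior terms left after integrating by parts still contain $Rc$ in $L^1/L^2$ on the tube and hence carry a factor $\Lambda^{1/2}$, so a $\Lambda$-independent $(t_2^{1/24}-t_1^{1/24})$ contribution cannot come out of this piece the way you describe. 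In the paper no splitting of $h$ and no integration by parts occur at all: one averages $|\mathrm{grad}\,\mathcal{F}|$ itself over the $3$-dimensional normal discs, controls the oscillation across a disc by $r_t\Vert\nabla\,\mathrm{grad}\,\mathcal{F}\Vert_{L^\infty}\le C r_t t^{-5/4}$, chooses the radius $r_t=R\,t^{7/24}$ (strictly smaller than the injectivity scale $t^{1/4}$, precisely so that this error becomes $t^{-23/24}$, integrable to $t^{1/24}$), converts the curve integral of disc $L^2$-averages into $\bigl(\int_M|\mathrm{grad}\,\mathcal{F}|^2\,dV\bigr)^{1/2}$ via the coarea formula and \eqref{dpi}, and finally uses Cauchy--Schwarz in time with the energy identity \eqref{eq:2.7a} to produce $\Lambda^{1/2}\bigl(t_2^{1/8}-t_1^{1/8}\bigr)^{1/2}$. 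Note the roles are opposite to your plan: the $\Lambda^{1/2}$-term absorbs the full gradient (including $\delta d\,Rc$) through the spacetime $L^2$ bound, while the $t^{1/24}$-term is the averaging error governed by $\nabla\,\mathrm{grad}\,\mathcal{F}$ and the choice of $r_t$. As written, your argument has a genuine gap at both the quasi-geodesic step and the exponent bookkeeping for the leading term.
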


These estimates allow one to prove Theorem \ref{th:1} and Theorem \ref{th:1aa} which are the main goals of Section \ref{sec:3} and Section \ref{prth12}. Here, in Section \ref{sec:3},
we may refer to the estimates 
in \cite[1.3, Theorem 1.8, p. 260]{streets2016concentration}. In Section \ref{prth12}, we write down an existence result which allows to apply Theorem \ref{th:2a} to the elements of the
sequence occurring in Theorem \ref{th:1aa}.

This work is a part of the author's doctoral thesis (\cite{ZergaengeDiss}), written under the supervision of Miles Simon at the Otto-von-Guericke-Universit\"at Magdeburg.


\section{\texorpdfstring{Distance control under the $L^2$-flow in $4$ dimensions}{Distance control under the L2-flow in 4 dimensions}}\label{sec:2}

In this section we prove Theorem \ref{th:2a}. 
In order to prove this Theorem we use the ``tubular averaging technique''  from \cite[Section 3, pp. 269-282]{streets2016concentration}. 
The method is derived from \cite[Section 3]{streets2016concentration}. 
In Subsection \ref{Backward estimates}, we apply the ``tubular averaging technique'' to the time-reversed flow. 
For the sake of understanding, we give detailed explanations of the steps in the proof, even if the argumentation is based on the content of \cite[Section 3]{streets2016concentration}.
In order to get a very rough feeling for J. Streets ``tubular averaging technique'' we recommend to read the first paragraph of \cite[p. 270]{streets2016concentration}

The proof of Theorem \ref{th:2a} is divided in two principal parts: 

In the first part of this section
we show that, along the flow, the distance between two points in manifold $M$ does not increase too much, i.e.: we derive the estimates of the shape
 $d(x,y,t)<d(x,y,0)+\epsilon$ for small $t(\epsilon)>0$. We say that this kind of an estimate is a ``forward estimate''.

The second part in this section is concerned with the opposite direction, i.e: we show that, along the flow, the distance between two points does not decay too much, 
which means that we have
$d(x,y,t)>d(x,y,0)-\epsilon$ for $t(\epsilon)>0$ sufficiently small. 

We point out that the estimate of the length change of a vector $v\in TM$ along a geometric flow usually requires an integration of the metric change $|g'(t)|_{g(t)}$ from $0$ to a later time point
 $T$ (cf. \eqref{ap:1a}). With a view to \eqref{eq:2.5}
and \eqref{eq:2.6} we note that, on the first view, this would require and integration of the function $t^{-1}$ from $0$ to $T$ which is not possible. 

In order to overcome this difficulty, we follow the ideas
in \cite[Section 3]{streets2016concentration}, 
i.e. we introduce some kind of connecting curves which have almost the properties of geodesics. Then we construct an appropriate tube around each of these connecting curves so that the
 integral $ \int_{\gamma}{ \left|\text{grad }\mathcal{F}\right|\, d\sigma  }$, which occurs in the estimate of $\left|\frac{d}{ dt} L(\gamma,t)\right|$ (cf. \eqref{ap:1}), can be estimated from above against a well-controlled average
integral along the tube plus an error integral which behaves also well with respect to $t$. 
We point out that we do not widen J. Streets ideas in \cite[Section 3]{streets2016concentration} by fundamental facts, we merely write down detailed information which allow to understand the distance changing behavior of J. Streets $L^2$-flow in a more detailed way.

\subsection{Tubular neighborhoods}
We quote the following definition  from {\cite[Definition 3.3., pp. 271-272]{streets2016concentration}}
\begin{defi}\label{tubedefi}
Let $(M^n,g)$ be a smooth Riemannian manifold without boundary, and let $\gamma: [a,b]\longrightarrow M$ be an smooth curve. Given $r>0$, and $s\in [a,b]$ then we define
\begin{align*}
D(\gamma(s),r)&:= \exp_{\gamma(s)}\left(B(0,r) \cap \langle \dot{\gamma}(s)\rangle^{\perp}  \right)
\intertext{and}
D(\gamma,r)&:= \bigcup_{s\in [a,b]}{D(\gamma(s),r)}
\end{align*}
We say  ``$D(\gamma,r)$ is foliated by $\left( D(\gamma(s),r) \right)_{s\in [a,b]}$'' if  
\begin{equation*}
 D(\gamma(s_1),{r}) \cap  D(\gamma(s_2),{r})= \emptyset
\end{equation*}
for all $a\leq s_1 < s_2 \leq b$.
\end{defi}
The following definition is based on {\cite[Definition 2.2., p. 267]{streets2016concentration}}.
\begin{defi}\label{fk}
Let $(M^n,g)$ be a closed Riemannian manifold, $k\in\mathbb{N}$ and $x\in M$, then we define 
\begin{align*}
f_k(x,g):=\sum_{j=0}^k{{| ^g\nabla^j Rm_{g}|_g^{\frac{2}{2+j}}}}(x)
\intertext{and}
f_k(M,g):=\sup_{x\in M}{f_k(x,g)}
\end{align*}
\end{defi}
At this point we refer to the scaling behavior of $f_k(x,g)$ which is outlined in Lemma \ref{fkscaling}.

The following result is a slight modification of {\cite[Lemma 3.4., pp. 272-274]{streets2016concentration}}. To be more precise: in this result we
allow the considered curve to have a parametrization close
to unit-speed, and not alone unit-speed.
\begin{lem}\label{tube}
Given $n,D,K,\iota>0$ there exists a constant $\beta(n,D,K,\iota)>0$ and a constant $\mu(n)>0$ so that if $(M^n,g)$ is a 
complete Riemannian manifold satisfying
\begin{align*}
diam_g(M)  &\leq D \\
f_3(M^n,g) &\leq K \\
inj_g(M)   &\geq \iota
\end{align*}
and $\gamma: [0,L]\rightarrow M$ is an injective smooth curve satisfying
\begin{align}
\label{almlg}
L(\gamma)&\leq d(\gamma(0),\gamma(L))+\beta \\
\label{almgeod}
| \nabla_{\dot{\gamma}}\dot{\gamma}|&\leq \beta \\
\label{almunit}
\frac{1}{1+\beta}\leq |\dot{\gamma}| &\leq 1+\beta
\end{align}
then $D(\gamma,R)$ is foliated by $\left( D(\gamma(s),R) \right)_{s\in [0,L]}$ for $R:=\mu \min\left\{\iota,K^{-\frac{1}{2}} \right\}$. Furthermore, if 
\begin{equation*}
\pi : D(\gamma,R)\longrightarrow \gamma([0,L]) 
\end{equation*}
is the projection map \index{projection map} sending a point $q\in D(p,R)$, where $p\in \gamma([0,L])$, to $p$, which is well-defined by the foliation property, then
\begin{equation}\label{dpi}
|d \pi |\leq 2\text{ on }D(\gamma,R)
\end{equation}
Here $d \pi$ denotes the differential and $|d \pi |$ denotes the operator norm of the differential of the projection map. 
\end{lem}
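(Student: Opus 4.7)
The plan is to mimic the proof of \cite[Lemma 3.4]{streets2016concentration} while tracking the extra error introduced by the almost-unit-speed hypothesis \eqref{almunit}. Define the normal exponential map
\begin{equation*}
E\colon \mathcal{N} := \bigl\{ (s,v) : s\in[0,L],\ v\in\dot\gamma(s)^{\perp},\ |v|<R \bigr\}\longrightarrow M,\qquad E(s,v) := \exp_{\gamma(s)}(v),
\end{equation*}
so that $D(\gamma,R)=E(\mathcal{N})$ and the foliation property of Definition \ref{tubedefi} is exactly the injectivity of $E$. The strategy is to split any would-be coincidence $E(s_1,v_1)=E(s_2,v_2)$ into a \emph{short-range} regime $|s_1-s_2|\lesssim R$ (handled by a quantitative inverse function theorem) and a \emph{long-range} regime (ruled out by the almost-minimizing condition \eqref{almlg}).

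For the short-range analysis I would compute $dE$ in the natural splitting $T_{(s,v)}\mathcal{N}=\mathbb{R}\partial_s\oplus\dot\gamma(s)^{\perp}$. Along the zero section $v=0$ one has $dE(\partial_s)=\dot\gamma(s)$ and $dE|_{\dot\gamma(s)^{\perp}}=\mathrm{id}$. For $v\neq 0$, Jacobi-field comparison using $|Rm_g|\leq f_3(M,g)\leq K$ yields
\begin{equation*}
\bigl|\,dE|_{\dot\gamma(s)^{\perp}} - \mathrm{id}\,\bigr|\leq C(n)K|v|^2,\qquad \bigl|\,dE(\partial_s)-\dot\gamma(s)\,\bigr|\leq C(n)\bigl(\beta+K|v|^2\bigr),
\end{equation*}
where the $\beta$-contribution absorbs the infinitesimal rotation required to keep an extension $v(s')$ of $v$ perpendicular to $\dot\gamma(s')$; this rotation is controlled by $|\nabla_{\dot\gamma}\hat{\dot\gamma}|\leq C\beta$ from \eqref{almgeod}--\eqref{almunit}. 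Combined with $|\dot\gamma|\geq(1+\beta)^{-1}$, choosing $R=\mu(n)\min\{\iota,K^{-1/2}\}$ with $\mu$ small and $\beta$ small forces the smallest singular value of $dE$ above $\tfrac{3}{4}$, and a quantitative inverse function theorem makes $E$ injective on each slab $[s_0-cR,s_0+cR]\cap[0,L]\ \times\ \{|v|<R\}$ for some $c=c(n)>0$.

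For the long-range regime, suppose $E(s_1,v_1)=E(s_2,v_2)=q$ with $s_1<s_2$. Concatenating the two radial geodesics through $q$ yields $d(\gamma(s_1),\gamma(s_2))\leq|v_1|+|v_2|\leq 2R$. A cut-and-paste argument shows that every subarc of $\gamma$ inherits the almost-minimizing property $L(\gamma|_{[s_1,s_2]})\leq d(\gamma(s_1),\gamma(s_2))+\beta$ --- otherwise replacing that subarc by a minimizing geodesic would produce a curve between $\gamma(0)$ and $\gamma(L)$ violating \eqref{almlg}. Together with the lower bound on $|\dot\gamma|$ from \eqref{almunit},
\begin{equation*}
\frac{s_2-s_1}{1+\beta}\leq L(\gamma|_{[s_1,s_2]})\leq 2R+\beta,
\end{equation*}
so $s_2-s_1\leq(1+\beta)(2R+\beta)$. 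Choosing $\beta=\beta(n,D,K,\iota)$ small enough places this in the short-range regime, where $E$ is already injective; hence $s_1=s_2$, and then $v_1=v_2$, contradicting the assumed non-injectivity. This establishes the foliation.

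For the projection bound, write $\pi=\gamma\circ\mathrm{pr}_1\circ E^{-1}$ with $\mathrm{pr}_1(s,v)=s$. Decomposing $dE^{-1}(w)=a\partial_s+B$ with $B\in\dot\gamma(s)^{\perp}$ gives $d\pi(w)=a\dot\gamma(s)$, whence $|d\pi(w)|\leq|\dot\gamma|\cdot\|dE^{-1}\|\cdot|w|\leq(1+\beta)\cdot\tfrac{4}{3}\cdot|w|\leq 2|w|$, once $\beta$ and $\mu$ are small enough to pinch the singular values of $dE$. The main obstacle I foresee is the quantitative one: one must verify that the Jacobi-field analysis in the $s$-direction is not amplified by the factor $|\dot\gamma|^{-1}$, and that the subarc-minimizing loss in the cut-and-paste step is bounded linearly by $\beta$. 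Both are manageable by choosing $\beta$ sufficiently small depending on $n,D,K,\iota$ while keeping $\mu$ universal in $n$, after which the rest is a faithful translation of Streets' unit-speed argument to the almost-unit-speed setting.
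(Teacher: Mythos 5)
Your proposal is correct in outline but takes a genuinely different route from the paper's proof. For nearby parameter values the paper places a normal chart at a hypothetical common point $p$ of two discs and uses the Gauss lemma together with an ODE comparison for $w(s)=\langle \partial/\partial r,\dot\gamma\rangle$ to show this inner product cannot vanish at two parameters, whereas you control the differential of the normal exponential map $E$ by Jacobi-field estimates and conclude slab-wise injectivity. For distant parameters the paper concatenates $\gamma$ with two radial geodesics through $p$ and compares lengths, incurring a loss of order $\beta D$ (hence its $D$-dependence), while your cut-and-paste remark that every subarc inherits \eqref{almlg} with the same $\beta$ reduces the far case to $s_2-s_1\le(1+\beta)(2R+\beta)$ directly --- this is cleaner and consistent with the paper's own remark that the $D$-dependence of $\beta$ can be avoided. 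For \eqref{dpi} the paper argues by contradiction through a blow-up: rescaling by $R_i^{-2}$, applying Anderson's $C^{2,\alpha}$-compactness and passing to the Euclidean limit, where the limit projection has norm at most $1$; this is soft and non-effective, while your direct bound $|d\pi|\le(1+\beta)\left\Vert dE^{-1}\right\Vert\le 2$ is effective but carries the quantitative burden. The one step you must spell out is the ``quantitative inverse function theorem'': pointwise lower bounds on the singular values of $dE$ do not by themselves give injectivity on a slab. You need to express $E$ in a single normal chart of radius comparable to $\min\{\iota,K^{-\frac{1}{2}}\}$ --- where $f_3(M,g)\le K$ and $inj_g(M)\ge\iota$ control the metric and the Christoffel symbols, exactly the kind of control the paper invokes via Definition \ref{Gamma} --- and show that $dE$ stays uniformly close to one fixed injective linear map on the whole slab (your error terms $C(\beta+K|v|^2)$, valid for $|v|<R=\mu\min\{\iota,K^{-\frac{1}{2}}\}$, do provide this), after which the standard mean-value estimate on a convex parameter domain gives injectivity and the bound on $\left\Vert dE^{-1}\right\Vert$. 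With that made precise, your argument delivers the foliation and \eqref{dpi} with $\mu=\mu(n)$ universal and $\beta=\beta(n,D,K,\iota)$, as the lemma requires.
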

\begin{proof} 
Above all, we want to point out, that, due to the injectivity of the curve, we can construct a tubular neighborhood around $\gamma([0,L])$. This is a consequence of \cite[26. Proposition, p. 200]{o1983semi}.
 But the size of this neighborhood is not controlled at first. Via radial projection we can ensure
that the velocity field of the curve is extendible in the sense of \cite[p. 56]{lee1997riemannian}.
We follow the ideas of the proof of \cite[Lemma 3.4, pp. 272-274]{streets2016concentration} with some modifications.

Firstly, we describe how $\mu(n)>0$ needs to be chosen in order to ensure that the curve has a suitable foliation which can be used to define the projection map.

Secondly, we show that the desired smallness condition of the derivative of the projection map is valid, i.e.: we show \eqref{dpi}. Here we allow $\mu(n)>0$ to become smaller.

Let
\begin{equation}
\mu(n):=\min\left\{ \widehat{\mu}(n),  \frac{1}{20} ,\frac{1}{64 C_1(n)C_2(n)}   \right\}
\end{equation}
where $\widehat{\mu}(n)>0$ and $C_1(n)>0$ are taken from \cite[Lemma 2.9, p. 268]{streets2016concentration} and $C_2(n)>0$ will be made explicit below. Let 
\begin{equation*}
R:=\mu \min\left\{\iota,K^{-\frac{1}{2}} \right\}
\end{equation*}
 Suppose there exists a point $p\in D(\gamma(s_0),R)\cap D(\gamma(s_1),R)$
 where $s_0,s_1\in [0,L]$, $s_0<s_1$ and  $s_1-s_0\leq 10 R$ at first. By definition, there exists a normal chart of radius $20R$ around $p$  (cf. \cite[pp. 76-81]{lee1997riemannian}).
In this chart we have the following estimate
\begin{equation}\label{eq:mu}
\sup_{B_g(p,20R)}{\mu K^{-\frac{1}{2}}|\Gamma|}\leq \frac{1}{64 C_2(n)}
\end{equation}
Choosing $\beta\in (0,1)$ small enough compared to $R$ we ensure that  $\gamma([s_0,s_1])$ lies in this chart. From \cite[Theorem 6.8., pp. 102-103]{lee1997riemannian} we obtain
\begin{equation*}
\left.\left\langle \frac{\partial}{\partial r}, \dot{\gamma}\right\rangle\right|_{\gamma(s_0)}=0
\end{equation*}
where 
\begin{equation}\label{rad}
\left.\frac{\partial}{\partial r}\right|_{\gamma(s)}:=\frac{\gamma^i(s)}{r(\gamma(s))} \left.\partial_i\right|_{\gamma(s)}
\end{equation}
and $\partial_1,...,\partial_n$ denote the coordinate vector fields and $\gamma^1,...,\gamma^n$ denote the coordinates
of $\gamma$ in this normal chart and 
\begin{equation*}
r(\gamma(s)):=\sqrt{\sum_{i=1}^n{(\gamma^i(s))^2}}
\end{equation*}
(cf. \cite[Lemma 5.10, (5.10), p. 77]{lee1997riemannian}). We show that it is possible to take $\beta(n,K,\iota)>0$ small enough to ensure that
\begin{equation*}
\left.\left\langle \frac{\partial}{\partial r}, \dot{\gamma}\right\rangle\right|_{\gamma(s)}\neq 0\hspace{1cm}\forall s\in (s_0,s_1]
\end{equation*}
This would be a contradiction to the fact that \cite[Theorem 6.8., pp. 102-103]{lee1997riemannian} also implies 
\begin{equation}\label{rad2}
\left.\left\langle \frac{\partial}{\partial r}, \dot{\gamma}\right\rangle\right|_{\gamma(s_1)}=0
\end{equation}
From \cite[Lemma 5.2 (c), p. 67]{lee1997riemannian} we infer on $[s_0,s_1]$
\begin{align}\label{partcov}
\begin{split}
\frac{\partial}{\partial s}\left.\left\langle \frac{\partial}{\partial r}, \dot{\gamma}\right\rangle\right|_{\gamma(s)}=&\left.\left\langle D_s{\frac{\partial}{\partial r}} , \dot{\gamma} \right\rangle\right|_{\gamma(s)}+\left.\left\langle {\frac{\partial}{\partial r}} , D_s \dot{\gamma} \right\rangle\right|_{\gamma(s)}\\
\geq&\left.\left\langle D_s{\frac{\partial}{\partial r}} , \dot{\gamma}  \right\rangle\right|_{\gamma(s)}-\left|\left.\left\langle {\frac{\partial}{\partial r}} , D_s \dot{\gamma}  \right\rangle\right|_{\gamma(s)}\right|\\
\geq & \left.\left\langle D_s{\frac{\partial}{\partial r}} , \dot{\gamma}  \right\rangle\right|_{\gamma(s)}-\left |  \left. \frac{\partial}{\partial r} \right|_{\gamma(s)} \right|
  \left|\nabla_{\dot{\gamma}(s)} \dot{\gamma}(s) \right|_g\\
\stackrel{\eqref{almgeod}}{\geq}& \left.\left\langle D_s{\frac{\partial}{\partial r}} , \dot{\gamma}  \right\rangle\right|_{\gamma(s)}- \beta
\end{split}
\end{align}
Using \eqref{rad} together with \cite[Lemma 4.9 (b), p. 57]{lee1997riemannian} and \cite[p. 56 (4.9)]{lee1997riemannian} we calculate
\begin{equation*}
D_s \frac{\partial}{\partial r}=\frac{ \dot{\gamma}^i \cdot r -  \gamma^i \left\langle \dot{\gamma}, \frac{\partial }{\partial r}\right\rangle}{r^2}  \partial_i+\frac{\gamma^i}{r} D_s \partial_i
=\frac{\dot{\gamma}^i}{r}\partial_i - \frac{   \gamma^i \left\langle \dot{\gamma}, \frac{\partial }{\partial r}\right\rangle}{r^2}  \partial_i+\frac{\gamma^i}{r} D_s \partial_i
\end{equation*}
This implies
\begin{align*}
\left\langle D_s \frac{\partial}{\partial r} , \dot{\gamma} \right\rangle=&\frac{1}{r} |\dot{\gamma}|^2-\frac{1}{r^2} \left\langle \dot{\gamma}, \frac{\partial }{\partial r}\right\rangle \left\langle 
 \gamma^i \partial_i  ,\dot{\gamma}  \right\rangle +\frac{\gamma^i}{r} \left\langle  D_s \partial_i , \dot{\gamma} \right\rangle\\
\stackrel{\eqref{rad}}{\geq} & \frac{1}{r} |\dot{\gamma}|^2-\frac{1}{r^2} \left|\left\langle \dot{\gamma}, \frac{\partial }{\partial r}\right\rangle\right| \left| \left\langle  r \frac{\partial}{\partial r}  
,\dot{\gamma}  \right\rangle \right|-   C_2(n)|\Gamma| | \dot{\gamma} |^2\\
\stackrel{\hphantom{\eqref{almunit}}}{\geq} & \frac{1}{r} |\dot{\gamma}|^2-\frac{1}{r} |\dot{\gamma}| \left| \left\langle  \frac{\partial}{\partial r}  ,\dot{\gamma}  \right\rangle \right|-C_2(n)
   |\Gamma| | \dot{\gamma} |^2\\
\stackrel{\eqref{almunit}}{\geq} & \frac{1}{4r} -\frac{2}{r}  \left| \left\langle  \frac{\partial}{\partial r}  ,\dot{\gamma}  \right\rangle \right|- 4 C_2(n) |\Gamma|\\
 =&\frac{1-4 C_2(n)\left| \left\langle  
\frac{\partial}{\partial r}  ,\dot{\gamma}  \right\rangle \right|- 16 C_2(n) r |\Gamma|}{4r}
\end{align*}
Here, in order to obtain the first estimate, we refer to Definition \ref{Gamma} and the fact that
\begin{equation*}
 \frac{1}{r(\gamma(t))}\sum_{i=1}^n{|\gamma^i(t)|}
\leq \widehat{C}(n)
\end{equation*}
Hence, \eqref{partcov} implies
\begin{align}
\begin{split}\label{DIneq}
\frac{\partial}{\partial s}\left\langle \frac{\partial}{\partial r}, \dot{\gamma}\right\rangle&\geq \frac{1-8\left| \left\langle  \frac{\partial}{\partial r}  ,\dot{\gamma}  \right\rangle \right|- 16 C_2(n) r|\Gamma|-4\beta r}{4r}\\
&\geq  \frac{1-8\left| \left\langle  \frac{\partial}{\partial r}  ,\dot{\gamma}  \right\rangle \right|- 16 C_2(n) \mu K^{-\frac{1}{2}}|\Gamma|-4\mu K^{-\frac{1}{2}}\beta }{4r}\\
&\stackrel{\eqref{eq:mu}}{\geq}  \frac{1-8\left| \left\langle  \frac{\partial}{\partial r}  ,\dot{\gamma}  \right\rangle \right|- \frac{1}{4}-4\mu K^{-\frac{1}{2}}\beta }{4r}\\
&\stackrel{\hphantom{\eqref{eq:mu}}}{\geq}  \frac{1-8\left| \left\langle  \frac{\partial}{\partial r}  ,\dot{\gamma}  \right\rangle \right|- \frac{1}{4}-\frac{1}{4} }{4r}\\
&\stackrel{\hphantom{\eqref{eq:mu}}}{=}  \frac{\frac{1}{2}-8\left| \left\langle  \frac{\partial}{\partial r}  ,\dot{\gamma}  \right\rangle \right|}{4r}=\frac{1}{8r} \left[ 1-16\left| \left\langle  \frac{\partial}{\partial r}  ,\dot{\gamma}  \right\rangle \right|\right]
\end{split}
\end{align}
We show that this differential inequality implies the desired contradiction. Let $w: [s_0,s_1]\longrightarrow \mathbb{R}$, $w(s):=\left.\left\langle \frac{\partial}{\partial r}, \dot{\gamma}\right\rangle\right|_{\gamma(s)}$. Then \eqref{DIneq} is equivalent to
\begin{equation*}
w'\geq \frac{1}{8r}(1-16 |w|)
\end{equation*}
on $[s_0,s_1]$. Since $w(s_0)=0$, there exists $\delta>0$ such that $w'>0$ on $[s_0,s_0+\delta]$. This implies  $w>0$ on $(s_0,s_0+\delta]$. 
We show that we have $w>0$ on $(s_0,s_1]$, which contradicts \eqref{rad2}.
Assumed
\begin{equation*}
\widehat{s}:=\sup\left\{s\in (s_0,s_1]|\, \left. w \right|_{(s_0,s]}>0 \right\}<s_1
\end{equation*}
which implies 
\begin{equation}\label{contr1}
w(\widehat{s})=0
\end{equation}
Then \eqref{DIneq} is equivalent to
\begin{equation*}
w'\geq \frac{1}{8r}(1-16 w)
\end{equation*}
on $[s_0,\widehat{s}]$. The function $z: [s_0,s_1]\longrightarrow \mathbb{R}$, $z(s):=\frac{1}{16}(1-e^{-\frac{2(s-s_0)}{r}})$ satisfies $z'= \frac{1}{8r}(1-16 z)$ on  $[s_0,s_1]$ and $z(s_0)=0$.
Thus we have
\begin{equation}\label{DIneq2}
\begin{cases}
(w-z)'\geq -\frac{2}{r}(w-z) & \text{on }[s_0,\widehat{s}]\\
(w-z)(s_0)=0
\end{cases}
\end{equation}
and we define a new function $\zeta : [s_0,s_1]\longrightarrow \mathbb{R}$ as follows $\zeta(s):=e^{\frac{2}{r}s}(w(s)-z(s))$. Then \eqref{DIneq2} implies
\begin{align*}
\zeta'(s)= &\frac{2}{r} e^{\frac{2}{r}s}(w(s)-z(s))+e^{\frac{2}{r}s}(w'(s)-z'(s))\\
\geq & \frac{2}{r} e^{\frac{2}{r}s}(w(s)-z(s))-\frac{2}{r}e^{\frac{2}{r}s}(w(s)-z(s))=0
\end{align*}
Hence
\begin{align*}
e^{\frac{2}{r}\widehat{s}}(w(\widehat{s})-z(\widehat{s}))=\zeta(\widehat{s})=\int_{s_0}^{\widehat{s}}{\zeta'(\tau)\ d\tau}\geq 0
\end{align*}
from this we obtain
\begin{equation*}
w(\widehat{s})\geq z(\widehat{s})=\frac{1}{16}(1-e^{-\frac{2(\widehat{s}-s_0)}{r}})>0
\end{equation*}
which contradicts \eqref{contr1}. Consequently, we have $w\geq 0$ on $[s_0,s_1]$. The same argumentation as above, adapted to the interval $[s_0,s_1]$, implies $w(s_1)>0$ in contradiction to \eqref{rad2}. This proves that two discs $D(\gamma(s_0),R)$ and $D(\gamma(s_1),R)$ cannot intersect, when $|s_1-s_0|\leq 10 R$.

Now, we show that two discs $D(\gamma(s_0),R)$ and  $D(\gamma(s_1),R)$ cannot intersect if we assume  $s_0,s_1\in [0,L]$, $s_0<s_1$, to be far away from each other, which means that $s_1-s_0>10R$ holds. 

We suppose that there exists a point $p\in D(\gamma(s_0),R)\cap D(\gamma(s_1),R)$. As in \cite[p. 273]{streets2016concentration} we
 construct a curve $\alpha$ in the following manner: $\alpha$ follows $\gamma$ from $\gamma(0)$ to $\gamma(s_0)$, next $\alpha$ 
connects $\gamma(s_0)$ and $p$ by a minimizing geodesic, 
then $\alpha$ connects  $p$ and $\gamma(s_1)$ also by a minimizing geodesic, and finally $\alpha$ follows $\gamma$ again from $\gamma(s_1)$ to $\gamma(L)$. We infer the following estimate:
\begin{align*}
d_g(\gamma(0),\gamma(L))\leq  L(\alpha)\leq &\int_0^{s_0}{ |\dot{\gamma} |\, ds} +R+R+ \int_{s_1}^{L}{|\dot{\gamma} |\, ds}\\
\stackrel{\eqref{almunit}}{\leq}& (1+\beta)s_0+ 2R+(1+\beta)(L-s_1)\\
\stackrel{\hphantom{\eqref{almunit}}}{=}& (1+\beta)L+ 2R-(1+\beta)(s_1-s_0)\\
\stackrel{\hphantom{\eqref{almunit}}}{=}& (1+\beta)\int_0^{L}{ \frac{|\dot{\gamma} |}{|\dot{\gamma} |}\, ds}+ 2R-(1+\beta)(s_1-s_0)\\
\stackrel{\eqref{almunit}}{\leq}& (1+\beta)^2\int_0^{L}{ |\dot{\gamma} |\, ds}+ 2R-(1+\beta)10R\\
\stackrel{\hphantom{\eqref{almunit}}}{\leq}& (1+\beta)^2L(\gamma)-8R \\
\stackrel{\eqref{almlg}}{\leq}& (1+\beta)^2(d_g(\gamma(0),\gamma(L))+\beta)-8R \\
\stackrel{\hphantom{\eqref{almlg}}}{\leq}& (1+2\beta+\beta^2)(d_g(\gamma(0),\gamma(L))+4\beta-8R \\
\stackrel{\hphantom{\eqref{almlg}}}{\leq}& (d_g(\gamma(0),\gamma(L))+3\beta D+4\beta-8R
\end{align*}
and consequently:
\begin{equation*}
0 \leq (3D+4)\beta-8R
\end{equation*}
which yields a contradiction when $\beta(n,D,K,\iota)>0$ is chosen small enough. Hence, two discs $D(\gamma(s_0),R)$ and  $D(\gamma(s_1),R)$ cannot intersect, provided they are not identical. Thus, 
$D(\gamma,R)$ is foliated by $\left( D(\gamma(s),R) \right)_{s\in [0,L]}$.


 It remains to show the estimate \eqref{dpi}.
We mentioned at the beginning of the proof, that now,  we allow $\mu$ to become smaller.

 As in the proof of \cite[Lemma 3.4.]{streets2016concentration} we suppose the assertion would be not true, i.e.
 there exists a sequence of constants $(\mu_i)_{i\in\mathbb{N}}$, where $\lim_{i\to\infty}{\mu_i}=0$,
 and a sequence of closed Riemannian manifolds
$(M_i^n,g_i)_{i\in\mathbb{N}}$ satisfying 
\begin{align*}
f_3(M_i,g_i)&\leq K_i\hspace{0.5cm}\text{and}\\
inj_{g_i}(M_i)&\geq \iota_i
\end{align*}
for all $i\in\mathbb{N}$, and curves $\gamma_i: [0,L_i]\longrightarrow M_i$ satisfying
\begin{align}
\begin{split}
L(\gamma_i)&\leq d(\gamma_i(0),\gamma_i(L_i))+\beta_i,\\
| \nabla_{\dot{\gamma_i}}\dot{\gamma_i}|&\leq \beta_i \hspace{0.5cm}\text{and}\\
\frac{1}{1+\beta_i} \leq |\dot{\gamma_i}|&\leq 1+\beta_i 
\end{split}
\end{align}
for all  $i\in\mathbb{N}$, where $(\beta_i)_{i\in\mathbb{N}}\subseteq (0,1]$, so that for each $i\in\mathbb{N}$ the tube $D(\gamma_i,R_i)$
is foliated by $\left( D(\gamma_i(s),R_i)\right)_{s\in [0,L_i]}$, where $R_i:= \mu_i\min\left\{\iota_i,K_i^{-\frac{1}{2}} \right\}$, but for each $i\in\mathbb{N}$ 
there exists a point $p_i=\gamma_i(s_i)$ and $y_i\in D(p_i,R_i)$ 
such that $|d\pi_i|(y_i)>2$. From this we construct a blow-up sequence of pointed Riemannian manifolds
\begin{equation*}
(M_i,h_i:=R_i^{-2} g_i, p_i)_{i\in\mathbb{N}}
\end{equation*}
which satisfies for each $i\in\mathbb{N}$ and $x\in M_i$
\begin{align*}
f_3(x,h_i)=f_3(x,R_i^{-2} g_i)\stackrel{\eqref{fkscalingeq}}{=}R_i^2\, f_3(x, g_i)\leq R_i^2\, K_i\leq \mu_i^2  \stackrel{i\to\infty}{\longrightarrow} 0
\intertext{and}
inj_{h_i}(M_i)=inj_{R_i^{-2} g_i}(M_i)=R_i^{-1}inj_{ g_i}(M_i)\geq R_i^{-1}\iota_i \geq \mu_i^{-1}\stackrel{i\to\infty}{\longrightarrow} \infty
\end{align*}
Hence, using \cite[Theorem 2.2, pp. 464-466]{anderson1989ricci}, we may extract a subsequence that converges with respect to the pointed
$C^{2,\alpha}$-sense to $(\mathbb{R}^n,g_{can},0)$. Next, for each $i\in\mathbb{N}$ we reparametrize the curve $\gamma_i$ as follows:
Let 
\begin{align*}
\widehat{\gamma}_i&: [0,\frac{L_i}{R_i}]\longrightarrow M_i\\
\widehat{\gamma}_i(s)&:=\gamma(R_i s)
\end{align*}
Then for each $i\in\mathbb{N}$ we have for all $s\in  [0,\frac{L_i}{R_i}] $
\begin{align*}
&|\dot{\widehat{\gamma}}_i(s)|_{h_i}\\
=&|\dot{\gamma}_i(R_i s)\cdot R_i|_{h_i}=R_i \cdot |\dot{\gamma}_i(R_i s)|_{R_i^{-2} g_i}=R_i\cdot R_i^{-1} |\dot{\gamma}_i(R_i s)|_{g_i}\\
=&|\dot{\gamma}_i(R_i s)|_{g_i}\stackrel{\eqref{almunit}}{\in} \left[\frac{1}{1+\beta},1+\beta \right]
\intertext{and, using normal coordinates at $\widehat{\gamma}(s)$} 
& | ^{h_i} \nabla_{\dot{\widehat{\gamma}}(s)}\dot{\widehat{\gamma}}(s)|^2_{h_i} =(h_i)_{kl}\ddot{\widehat{\gamma}}^k(s)\ddot{\widehat{\gamma}}^l(s) 
=R_i^{-2}\cdot (g_i)_{kl}\cdot \ddot{\widehat{\gamma}}^k(s)\ddot{\widehat{\gamma}}^l(s)\\
=&R_i^{-2}\cdot (g_i)_{kl}\cdot R_i^2 \cdot \ddot{\gamma}^k(R_i s)\cdot R_i^2 \cdot \ddot{\gamma}^l(R_i s)\\
=& R_i^2 \cdot (g_i)_{kl}\cdot \ddot{\gamma}^k(R_i s) \ddot{\gamma}^l(R_i s) \\
= &R_i^2\cdot | ^{g_i} \nabla_{\dot{\gamma}(R_i s)}\dot{\gamma}(R_i s)|^2_{g_i}\stackrel{\eqref{almgeod}}{\leq} R_i^2 \cdot \beta_i^2 \leq R_i^2
\end{align*}
Hence 
\begin{equation*}
\lim_{i\to\infty}\max_{[0,\frac{L_i}{R_i}]}{| ^{h_i} \nabla_{\dot{\widehat{\gamma}}}\dot{\widehat{\gamma}}|_{h_i}}=0
\end{equation*}
Using the Arzel\`a-Ascoli Theorem we conclude, that these curves converge with respect to the $C^{1,\alpha}$-sense to a geodesic which goes through the origin.
 After an eventual rotation,
we may assume that $\gamma(t)=(t,0,...,0)$. In the blow-up metric $h_i$ each point $y_i$ has a distance to $p_i$ not bigger than $1$. That means, that this point can be considered
 as a point in $B_{g_{can}}(0,2)\subseteq\mathbb{R}^n$. 
 This sequence of points will converge to a point $y\in \overline{B}_{g_{can}}(0,1)\cap \left\{x\in\mathbb{R}^n : x^1=0 \right\}$. We recall that the projection maps
 $\pi_{i}: D(\gamma_i,R_i)\longrightarrow \gamma_i([0,L_i])$ are satisfying $|d\pi_i|(y_i)>2$ by assumption. Due to the scaling invariance, this inequality is also true with respect to the blow-up metric
$h_i$. 
Since the Riemannian metrics $h_i$ converge in the $C^{2,\alpha}$-sense to the euclidean space and the curves $\gamma_i$ converge in the $C^{1,\alpha}$-sense, the maps $\pi_i$ converge in the $C^{1}$-sense to a map on the limit space, which will be denoted by $\pi$. Here we have used, that each tubular neighborhood is a diffeomorphic image of a neighborhood of the zero section
in the normal bundle on the curve $\gamma_i$ (\cite[pp. 199-200, 25. Proposition / 26. Proposition]{o1983semi}). Hence, we conclude $|d\pi|(y)\geq 2$, but the map $\pi$ is
explicitly given as $(x^1,...,x^n)\mapsto (x^1,0,...,0)$ and this map satisfies $|d\pi|\leq 1$, which yields a contradiction.
\end{proof}

We want to point that it is also possible to deduce Lemma \ref{tube} from the statement 
of \cite[Lemma 3.4, p. 272]{streets2016concentration} by use of unit-speed parametrization. On doing so, it is possible to avoid
the dependence of the constant $\beta>0$ on the diameter $D>0$.

\subsection{Forward estimates}\label{Forward estimates}
In this paragraph we show that, under certain assumptions, distances do not increase too much along the $L^2$-flow. 

Here, we prove the following estimate:
\begin{lem}\label{forwest}
Let $(M^4,g_0)$ be a closed Riemannian 4-manifold and let \linebreak 
$(M^4,g(t))_{t\in[0,1]}$ be a solution to the flow given in \eqref{eq:2.5} satisfying \eqref{eq:2.6-}, \eqref{eq:2.6}, \eqref{eq:2.6a} and \eqref{eq:2.6b}, i.e.:
\begin{align*}
\int_{M}{|Rm_{g_0}|_{g_0}^2\, dV_{g_0}}&\leq \Lambda \\
\Vert Rm_{g(t)}\Vert_{L^{\infty}(M,g(t))}&\leq K t^{-\frac{1}{2}}\\
inj_{g(t)}(M)&\geq \iota t^{\frac{1}{4}}\\
diam_{g(t)}(M)&\leq 2(1+D)
\end{align*}
for all $t\in(0,1]$. Then we have the following estimate:
\begin{equation}\label{disexforw}
d(x,y,t_2)-d(x,y,t_1)\leq C(K,\iota,D)\Lambda^{\frac{1}{2}}\left(t_2^{\frac{1}{8}}-t_1^{\frac{1}{8}}\right)^{\frac{1}{2}} +C(K,\iota,D) \left( t_2^{\frac{1}{24}}-t_1^{\frac{1}{24}} \right)
\end{equation}
for all $t_1,t_2\in [0,1]$ where $t_1<t_2$.
\end{lem}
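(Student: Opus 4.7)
The plan is to reduce the forward distance change to a time integral of an arclength integral of $|\partial_t g|$ along minimizing geodesics, and then to absorb the non-integrable pointwise bound $|\partial_t g|\lesssim t^{-1}$ through J.~Streets' tubular averaging. At each time $t\in[t_1,t_2]$ I would pick a unit-speed minimizing $g(t)$-geodesic $\gamma_t:[0,d(x,y,t)]\to M$ from $x$ to $y$. By the first variation of arclength applied to the one-parameter family $\{g(t)\}$ (and the fact that $d(x,y,\cdot)$ is locally Lipschitz in $t$), one has at every differentiability point of $d(x,y,\cdot)$
\begin{equation*}
\frac{d}{dt}d(x,y,t) \leq \frac{1}{2}\int_{\gamma_t}(\partial_t g)(T,T)\,d\sigma_{g(t)} \leq \frac{1}{2}\int_{\gamma_t}|\partial_t g|_{g(t)}\,d\sigma_{g(t)},
\end{equation*}
with $T=\dot\gamma_t$. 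The gradient formula in \eqref{eq:2.5} together with the Shi-type regularity theory for the fourth-order $L^2$-flow (derived from \eqref{eq:2.6}, \eqref{eq:2.6a} by standard interior parabolic estimates) yields $|\nabla^j Rm|_{g(t)}\leq C_j(K)\,t^{-(j+2)/4}$, whence both $f_3(M,g(t))\leq C(K)\,t^{-1/2}$ and the pointwise bound $|\nabla\partial_t g|_{g(t)}\leq C(K)\,t^{-5/4}$.

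Next I would apply Lemma \ref{tube} to $\gamma_t$ (truncating before the first cut point to guarantee injectivity). Since $\gamma_t$ is a genuine $g(t)$-geodesic, the hypotheses \eqref{almlg}, \eqref{almgeod}, \eqref{almunit} are satisfied with $\beta=0$, and the rescaled parameters are $K\rightsquigarrow C(K)\,t^{-1/2}$ and $\iota\rightsquigarrow \iota\,t^{1/4}$. The lemma then supplies a foliated tube $D(\gamma_t,R(t))$ of radius $R(t)=c(K,\iota)\,t^{1/4}$ together with a projection $\pi_t:D(\gamma_t,R(t))\to\gamma_t$ satisfying $|d\pi_t|\leq 2$ by \eqref{dpi}.

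With the tube in hand I would carry out the tubular averaging from \cite[Section~3]{streets2016concentration}. Pushing forward along $\pi_t$ and using Fubini over the transverse $3$-disks $D(\gamma_t(s),R(t))$, combined with a transverse Poincar\'e inequality on each disk, one obtains the schematic bound
\begin{equation*}
\int_{\gamma_t}|\partial_t g|\,d\sigma_{g(t)} \leq \frac{C(K,\iota,D)}{R(t)^{3/2}}\,\|\partial_t g\|_{L^2(M,g(t))} + \mathrm{Err}(t),
\end{equation*}
where the main term originates from the bulk $L^2$-integral of $|\partial_t g|$ over the tube (of volume $\leq C\,L\,R(t)^3$, since $|d\pi_t|\leq 2$), and the error $\mathrm{Err}(t)$ comes from the Poincar\'e oscillation estimate across each transverse disk, controlled by $R(t)$ times a pointwise bound on $|\nabla\partial_t g|$; the careful balancing of $R(t)$ against these two contributions in \cite[Section~3]{streets2016concentration} produces $\mathrm{Err}(t)\leq C(K,\iota,D)\,t^{-23/24}$, which is the source of the power $1/24$ in \eqref{disexforw}.

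Finally I would integrate in $t\in[t_1,t_2]$. For the main term, Cauchy--Schwarz in $t$ and the gradient-flow energy identity
\begin{equation*}
\int_0^1 \|\partial_t g\|_{L^2(M,g(t))}^2\,dt = \mathcal{F}(g_0)-\mathcal{F}(g(1)) \leq \Lambda,
\end{equation*}
together with the elementary inequality $t_2^{1/4}-t_1^{1/4}\leq 2(t_2^{1/8}-t_1^{1/8})$ on $[0,1]$, yield $\int_{t_1}^{t_2}C\,t^{-3/8}\|\partial_t g\|_{L^2}\,dt\leq C(K,\iota,D)\,\Lambda^{1/2}(t_2^{1/8}-t_1^{1/8})^{1/2}$; the error integrates directly, $\int_{t_1}^{t_2}t^{-23/24}\,dt=24(t_2^{1/24}-t_1^{1/24})$. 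Together these reproduce \eqref{disexforw}. The main obstacle I anticipate is the tubular-averaging step: besides the bookkeeping of the Jacobian of $\pi_t$ so that Fubini produces exactly the normalisation $R^{-3}$, one must carry out the precise balancing inside the Poincar\'e estimate which sharpens the crude error $R(t)\|\nabla\partial_t g\|_{L^\infty}\sim t^{-1}$ down to $t^{-23/24}$; the cut-locus issue for $\gamma_t$ is handled by working at a.e.\ $t$ away from the measure-zero set at which $(x,y)$ fails to be a regular pair.
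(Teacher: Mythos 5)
Your overall strategy is the paper's: reduce the length change of an (almost) minimizing curve to a tubular average of $|\mathrm{grad}\,\mathcal{F}|$ using Lemma \ref{tube} and the disk-area/coarea bounds of \cite{streets2016concentration}, then integrate in time via Cauchy--Schwarz and the energy identity \eqref{eq:2.7a}. Where you deviate harmlessly is in the time regularity: you differentiate $d(x,y,\cdot)$ at a.e.\ $t$ along a minimizing $g(t)$-geodesic, whereas the paper avoids this by building a $\beta$-quasi-forward-geodesic (Lemma \ref{exforw}) that is frozen on short time subintervals and then telescoping; your route is a legitimate alternative provided you justify the Dini-derivative inequality for the Lipschitz function $t\mapsto d(x,y,t)$, and it even spares you the quasi-geodesic machinery (a genuine geodesic satisfies \eqref{almlg}--\eqref{almunit} trivially). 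Also note that the transverse ingredient you need is the \emph{lower} bound $Area(D(p,r))\geq c\,r^{3}$ from \cite[Lemma 2.7]{streets2016concentration}, not an upper bound on the tube volume.

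The genuine gap is the radius choice, i.e.\ exactly the step you flag as an ``anticipated obstacle'' and leave to an unspecified Poincar\'e balancing. With your radius $R(t)=c(K,\iota)\,t^{1/4}$, the error term is $R(t)\cdot\sup|\nabla\,\mathrm{grad}\,\mathcal{F}|\sim t^{1/4}\cdot t^{-5/4}=t^{-1}$, whose time integral diverges as $t_1\to 0$ --- and $t_1=0$ is precisely the case needed for the applications. No Poincar\'e inequality rescues this (the paper uses none): comparing the value at the disk center with the disk average always costs radius times a sup-gradient bound, and $\Vert\nabla\,\mathrm{grad}\,\mathcal{F}\Vert_{L^2}$ is not controlled by the energy identity. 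The paper's fix is simply to \emph{shrink} the foliation radius to $r_t=R(\iota,K)\,t^{7/24}$ (any exponent strictly between $1/4$ and $1/3$ works; see \eqref{rhotest} and \eqref{gradFest}): then the error becomes $C(K)R\,t^{7/24-5/4}=C\,t^{-23/24}$, which integrates to the $t^{1/24}$ term in \eqref{disexforw}, while the main term's prefactor worsens from your $t^{-3/8}$ to $r_t^{-3/2}t^{0}\sim t^{-7/16}$, which is still square-integrable and after Cauchy--Schwarz yields exactly $\Lambda^{1/2}(t_2^{1/8}-t_1^{1/8})^{1/2}$. So the exponents $1/8$ and $1/24$ in the statement are artifacts of this radius balancing; as written, with $R(t)\sim t^{1/4}$ hard-coded both in your main term and your error term, the claimed bound $\mathrm{Err}(t)\leq C\,t^{-23/24}$ does not follow and the proof does not close.
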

As mentioned at the beginning of this section, we aim to use some kind of connecting curves between two points which are close to geodesics. 
These curves can be surrounded by a tube such that the projection map has bounded differential  (c.f. Lemma \ref{tube}). 

The following definition is a modification of \cite[Definition 3.1., p. 270]{streets2016concentration}. Our definition is slightly stronger in some sense because we also assume a
 stability estimate of the length of the velocity vectors along the subintervals.
 We point out that we call the following objects $\beta$-quasi-forward-geodesics and not merely
$\beta$-quasi-geodesics, as in \cite[Definition 3.1., p. 270]{streets2016concentration}. In Subsection \ref{Backward estimates} 
we introduce a time-reversed counterpart to these family of curves.
\begin{defi}\index{$\beta$-quasi-forward-geodesic}
Let $(M^n,g(t))_{t\in[t_1,t_2]}$ be a family of complete Riemannian manifolds. Given $\beta>0$ and $x,y\in M$ then we say that a family of curves $(\gamma_t)_{t\in [t_1,t_2]} : [0,1]\longrightarrow M$ is a
$\beta$-quasi-forward-geodesic connecting $x$ and $y$ if there is a constant $S>0$ so that:
\begin{enumerate}
\item For all $t\in [t_1,t_2]$ one has $\gamma_t(0)=x$ and $\gamma_t(1)=y$
\item  For all $j\in\mathbb{N}_0$ such that $t_1+jS \leq t_2$, $\gamma_{t_1+jS}$ is a length minimizing geodesic
\item  For all $j\in\mathbb{N}_0$ such that $t_1+jS \leq t_2$, and all $t\in [t_1+jS,t_1+(j+1)S)\cap [t_1,t_2]$ one has $\gamma_t= \gamma_{t_1+jS}$
\item For all $t\in [t_1,t_2]$ one has
\begin{align}
\label{eq:1.4}
 d(x,y,t)& \leq L(\gamma_t,t)\leq d(x,y,t) + \beta 
\end{align}
\item  For all $j\in\mathbb{N}_0$ such that $t_1+jS \leq t_2$, and all $t\in [t_1+jS,t_1+(j+1)S)\cap [t_1,t_2]$ one has 
\begin{align} 
\label{eq:1.6}
 \frac{1}{1+\beta}  d(x,y,t_1+jS)  \leq   \left| \dot{\gamma_t} \right|_{g(t)}&\leq  (1+\beta) d(x,y,t_1+jS)\\
\label{eq:1.5}
|^{g(t)}\nabla_{\dot{\gamma_t}}\dot{\gamma_t}|_{g(t)} &\leq \beta\, d^2(x,y,t_1+jS) 
\end{align} 
\end{enumerate}
\end{defi}

It is our aim to prove the following existence result:
\begin{lem}\label{exforw}
Let $(M^n, g(t))_{t\in [t_1,t_2]}$ a smooth family of closed Riemannian manifolds. Given $\beta>0$ and $x,y\in M$ then there exists a $\beta$-quasi-forward-geodesic connecting $x$ and $y$.
\end{lem}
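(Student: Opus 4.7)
The plan is to choose one constant $S=S(\beta,g)>0$ small in terms of the modulus of continuity of $t\mapsto g(t)$ on the compact spacetime slab $M\times[t_1,t_2]$, then for each $j$ with $t_1+jS\leq t_2$ to define $\gamma_{t_1+jS}:[0,1]\to M$ to be any minimizing $g(t_1+jS)$-geodesic from $x$ to $y$, parametrized proportionally to $g(t_1+jS)$-arc length, and to extend the family by $\gamma_t:=\gamma_{t_1+jS}$ on $[t_1+jS,t_1+(j+1)S)\cap[t_1,t_2]$ as required by condition (3). Minimizers exist at every time by Hopf--Rinow applied to the closed (hence complete) manifold $(M,g(t))$. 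With this definition, (1), (2) and (3) hold by construction; the lower bound in \eqref{eq:1.4} is automatic since every $\gamma_t$ connects $x$ to $y$; and at the anchor time $t=t_1+jS$ one has $|\dot\gamma_{t}|_{g(t)}\equiv d(x,y,t_1+jS)$ and $^{g(t)}\nabla_{\dot\gamma_{t}}\dot\gamma_{t}\equiv 0$, so \eqref{eq:1.5} and \eqref{eq:1.6} are trivial there.

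The remaining task is to control $t\in(t_1+jS,t_1+(j+1)S)$. Fixing such a $j$ and writing $t_*:=t_1+jS$, I would cover the image $\gamma_{t_*}([0,1])$ by finitely many coordinate charts. Since $t\mapsto g(t)$ is smooth on the compact slab, the chart components $g_{ij}(t)$ and the Christoffel symbols $\Gamma^k_{ij}(t)$ are uniformly Lipschitz in $t$, giving a common modulus $\omega(S)\to 0$ with $|g(t)-g(t_*)|+|\Gamma(t)-\Gamma(t_*)|\leq \omega(S)$ whenever $|t-t_*|\leq S$. The bound on $g$ immediately yields $|\dot\gamma_{t_*}|_{g(t)}/|\dot\gamma_{t_*}|_{g(t_*)}\in[(1+\beta)^{-1},1+\beta]$ once $S$ is small, which is \eqref{eq:1.6}. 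For \eqref{eq:1.5}, the identity $^{g(t)}\nabla_{\dot\gamma}\dot\gamma-{}^{g(t_*)}\nabla_{\dot\gamma}\dot\gamma=(\Gamma(t)-\Gamma(t_*))(\dot\gamma,\dot\gamma)$ in coordinates, together with $^{g(t_*)}\nabla_{\dot\gamma}\dot\gamma=0$, produces $|{}^{g(t)}\nabla_{\dot\gamma}\dot\gamma|_{g(t)}\leq C\omega(S)\,d^2(x,y,t_*)$, which lies below $\beta\,d^2(x,y,t_*)$ for $S$ small. Finally, integrating the speed bound gives $L(\gamma_t,t)\leq (1+\beta)\,d(x,y,t_*)$, and the continuity of $t\mapsto d(x,y,t)$ on $[t_1,t_2]$ (bounded above by $\sup_{[t_1,t_2]}\mathrm{diam}(M,g(\cdot))<\infty$) lets one absorb the defect $(1+\beta)\,d(x,y,t_*)-d(x,y,t)$ into $\beta$ after a further shrinkage of $S$, finishing \eqref{eq:1.4}.

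Since the slab $[t_1,t_2]$ contains only finitely many anchor times $t_*$, a single sufficiently small $S=S(\beta,g)$ works simultaneously at all of them, and each of \eqref{eq:1.4}--\eqref{eq:1.6} is \emph{instantaneous} in $t$ and $j$, so no error accumulates between steps. The main obstacle is not analytic but bookkeeping: one has to fix, in order, the diameter supremum, the Lipschitz constants of $g_{ij}$ and $\Gamma^k_{ij}$ in time, and the modulus of continuity of $d(x,y,\cdot)$, and only then choose $S$ so that all three resulting error terms are below the required fraction of $\beta$. Beyond uniform continuity on a compact set there is no genuine difficulty.
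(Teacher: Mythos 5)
Your proposal is correct and follows essentially the paper's own route: at each anchor time $t_1+jS$ you freeze a constant-speed minimizing geodesic and obtain \eqref{eq:1.4}, \eqref{eq:1.6} and \eqref{eq:1.5} by choosing $S$ small using the smooth (hence uniformly Lipschitz) dependence of $g(t)$ on the compact slab, whereas the paper encodes the same smallness through $A=\max(\Vert g'\Vert_{L^\infty}+\Vert\nabla g'\Vert_{L^\infty})$ and differential inequalities (including a short bootstrap argument for \eqref{eq:1.5}), your static comparison of metrics and Christoffel symbols being an equivalent and if anything more direct verification (note the $d^2(x,y,t_1+jS)$ factor cancels, so no $x,y$-dependence enters there). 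Two cosmetic points: in \eqref{eq:1.4} the length-comparison factor must be taken as $1+C\omega(S)$ rather than literally $1+\beta$ so that the defect $C\omega(S)\,\sup_t \mathrm{diam}_{g(t)}(M)$ can be absorbed into $\beta$ (your final bookkeeping paragraph already says this), and the remark about ``finitely many anchor times'' is unnecessary and slightly circular (their number depends on $S$); the needed uniformity over all anchors comes from uniform continuity on the compact slab, exactly as your per-interval estimates show.
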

\begin{rem}
The interval length $S>0$ which will be concretized along the following proof has a strong dependency on the given points $x,y\in M$,  $\beta>0$ and the flow itself. As it turns out in the proof of Lemma \ref{forwest}, this will not cause problems because estimates on the subintervals will be put together to an estimate on the entire interval $[t_1,t_2]$ via a telescope sum.
\end{rem}
\begin{proof}[Proof of Lemma \ref{exforw}]  
In order to obtain the desired existence result, we modify the proof of \cite[Lemma 3.2., p. 271]{streets2016concentration}. Let
\begin{equation}\label{DefA}
A:= \max_{t\in[t_1,t_2]}{\left\Vert g'(t) \right\Vert_{L^{\infty}(M,g(t))}}+\max_{t\in[t_1,t_2]}{\left\Vert \nabla  g'(t) \right\Vert_{L^{\infty}(M,g(t))}}
\end{equation}

At time $t_1+jS$ we choose a length minimizing geodesic
$\gamma_{t_1+jS}: [0,1]\longrightarrow M $ with respect to the metric $g(t_1+jS)$
 connecting $x$ and $y$. This curve satisfies
\begin{align}
\label{geodeq}
|\nabla_{\dot{\gamma}_{t_1+jS}}{\dot{\gamma}_{t_1+jS}}|_{g(t_1+jS)}\equiv 0
\intertext{and}
\label{constspeed}|\dot{\gamma}_{t_1+jS}|_{g(t_1+jS)} \equiv d(x,y,t_1+jS)
\end{align}

Firstly, we show that an appropriate choice of $S(\beta,x,y,g)>0$ implies \eqref{eq:1.6}. Let $v\in TM$ be an arbitrary vector and $t\in [t_1+jS,t_1+(j+1)S)\cap [t_1,t_2]$ Then, by \eqref{ap:1a}, we have 
\begin{equation}\label{eq:1.6a}
\left|\log\left(\frac{|v|^2_{g(t)}}{|v|^2_{g(t_1+jS)}} \right)\right| \leq \int_{t_1+jS}^t{\left\Vert g'(\tau)\right\Vert_{(L^{\infty}(M),g(\tau))} \ d \tau}\stackrel{\eqref{DefA}}{\leq} A S \leq \log[ (1+\beta)^2 ]
\end{equation}
Hence, we obtain the estimate
\begin{equation*}
\frac{1}{(1+\beta)^2} |\dot{\gamma}_{t_1+jS}|^2_{g(t_1+jS)} \leq|\dot{\gamma}_t|^2_{g(t)} \leq (1+\beta)^2 |\dot{\gamma}_{g(t_1+jS)}|^2_{g(t_1+jS)}
\end{equation*}
Using \eqref{constspeed} we infer \eqref{eq:1.6} from this. Next we show \eqref{eq:1.4}. Using \eqref{ap:1} we obtain
\begin{align}\label{eq:1.6.5}
\frac{\partial}{\partial t}{L(\gamma_t,t)}=\frac{\partial}{\partial t}{L(\gamma_{t_1+jS},t)} \stackrel{\eqref{DefA}}{\leq} A\cdot L(\gamma_{t_1+jS},t)= A \cdot L(\gamma_t,t) 
\end{align}
on $(t_1+jS,t_1+(j+1)S)\cap [t_1,t_2)$. This implies $\frac{\partial}{\partial t}\log L(\gamma_t,t) \leq A$, and we infer
\begin{align}
\label{eq:1.10-}
\begin{split}
d(x,y,t)\leq &L(\gamma_{t},t)=\frac{L(\gamma_{t},t) }{L(\gamma_{t_1+jS},t_1+jS)}L(\gamma_{t_1+jS},t_1+jS)\\
= & \exp\left(\log\left( \frac{L(\gamma_{t},t) }{L(\gamma_{t_1+jS},t_1+jS)}\right)\right) L(\gamma_{t_1+jS},t_1+jS) \\
= & \exp\left(\log\left( L(\gamma_{t},t) \right)- \log\left( L(\gamma_{t_1+jS},t_1+jS) \right)\right) L(\gamma_{t_1+jS},t_1+jS)\\
\leq & e^{A(t-(t_1+jS))} L(\gamma_{t_1+jS},t_1+jS)= e^{A(t-(t_1+jS))} d(x,y,t_1+jS) 
\end{split}
\end{align}
In particular, we have
\begin{equation}\label{eq:1.10}
d(x,y,t)  \leq e^{A(t_2-t_1)}  L(\gamma_{t_1},t_1)= e^{A(t_2-t_1)} d(x,y,t_1) \hspace{1cm}\forall t\in [t_1,t_2]
\end{equation}
From \eqref{eq:1.10-} we obtain for all $t\in (t_1+jS,t_1+(j+1)S)\cap [t_1,t_2] $
\begin{align*}
L(\gamma_{t},t)  \leq & d(x,y,t_1+jS)+(e^{AS}-1)d(x,y,t_1+jS)\\
\stackrel{\eqref{eq:1.10}}{\leq} & d(x,y,t_1+jS)+(e^{AS}-1) e^{A(t_2-t_1)}  d(x,y,t_1)\\
\leq  & d(x,y,t_1+jS)+\frac{\beta}{2}
\end{align*}
In order to prove \eqref{eq:1.4} it suffices to show that we can choose $S(\beta,x,y,g)>0$ small enough to ensure 
\begin{equation}
d(x,y,t_1+jS) \leq  d(x,y,t)+\frac{\beta}{2}\hspace{1cm}\forall t\in (t_1+jS,t_1+(j+1)S)\cap [t_1,t_2]
\end{equation}
From \eqref{eq:1.6a} we conclude for all $v\in TM$
\begin{equation}\label{eq:1.6b}
e^{-AS} |v|^2_{g({t_1+jS})} \leq  |v|^2_{g(t)} \leq e^{AS} |v|^2_{g(t_1+jS)} \hspace{0.5cm}\forall t\in (t_1+jS,t_1+(j+1)S) \cap [t_1,t_2]\hspace{0.5cm} 
\end{equation}
At time $t$, we choose a length minimizing geodesic $\xi: [0, d(x,y,t)]\longrightarrow M$ connecting $x$ and $y$, then:
\begin{align*}
d(x,y,t_1+jS)\leq & L(\xi,t_1+jS)=\int_{0}^{d(x,y,t)}{|\dot{\xi}(s) |_{g(t_1+jS)}\, ds }\\
\stackrel{\eqref{eq:1.6b}}{\leq} & e^{AS}  \int_{0}^{d(x,y,t)}{|\dot{\xi}(s) |_{g(t)}\, ds }=  e^{AS} L(\xi,t)= e^{AS} d(x,y,t)\\
=& d(x,y,t) +(e^{AS}-1) d(x,y,t)\\
\stackrel{\eqref{eq:1.10}}{\leq} & d(x,y,t) +(e^{AS}-1) e^{A(t_2-t_1)} d(x,y,t_1) \leq d(x,y,t) +\frac{\beta}{2} 
\end{align*}

It remains to show that, under the assumption that $S(\beta,x,y,g)>0$ is sufficiently small, estimate \eqref{eq:1.5} is also valid. From \eqref{ap:2}, \eqref{DefA} and \eqref{eq:1.6} we conclude
for each $t\in (t_1+jS,t_1+(j+1)S)\cap [t_1,t_2]$
\begin{align}
\begin{split}\label{geodesiccurvderivate}
\frac{\partial }{\partial t} \left| \nabla_{\dot{\gamma}_t}   \dot{\gamma}_t \right|_{g(t)}^2\leq  &
A \left| \nabla_{\dot{\gamma}_t}   \dot{\gamma}_t \right|_{g(t)} ^2+ 4 A C(n) d^2(x,y,t_1+jS) \left| \nabla_{\dot{\gamma}_t}   \dot{\gamma}_t \right|_{g(t)}\\
\stackrel{\eqref{eq:1.10}}{\leq}  & A \left| \nabla_{\dot{\gamma}_t}   \dot{\gamma}_t \right|_{g(t)} ^2+4 A C(n)  e^{2A(t_2-t_1)} d^2(x,y,t_1) \left| \nabla_{\dot{\gamma}_t}   \dot{\gamma}_t \right|_{g(t)}
\end{split}
\end{align}
Now let $x\in M$ be arbitrary. We assume that
\begin{align*}
&\widehat{t}:= \sup\Big\{t\in  (t_1+jS,t_1+(j+1)S)\cap [t_1,t_2) \,|\, \\
&\left| \nabla_{\dot{\gamma}_{\tau}}  \dot{\gamma}_{\tau} \right|^2_{g(\tau)}(x,\tau)\leq\min\{\beta \, \overline{d}^2,1\} \ \forall \tau\in [t_1+jS,t]\Big\}\\
&\hspace{2cm}<\min\{t_1+(j+1)S,t_2 \}
\end{align*}
where
\begin{align*}
\overline{d}&:=\min_{t\in [t_1,t_2]}{d(x,y,t)}>0
\end{align*}
Then, \eqref{geodesiccurvderivate} implies 
\begin{align*}
\frac{\partial }{\partial t} \left| \nabla_{\dot{\gamma}_t}   \dot{\gamma}_t \right|_{g(t)} ^2\leq A (1+  4 C  e^{2A(t_2-t_1)} d^2(x,y,t_1) )\text{ on }\{x\} \times [t_1+jS,\widehat{t}] 
\end{align*}
Using this, from \eqref{geodeq}, we conclude:
\begin{align*}
\min\{\beta \, \overline{d}^2,1\}&=\left| \nabla_{\dot{\gamma}_{\widehat{t}}}   \dot{\gamma}_{\widehat{t}} \right|_{g(\widehat{t})} ^2(x,\widehat{t})\\
\leq & A (\widehat{t}-(t_1+jS)) (1+  4C  e^{2A(t_2-t_1)} d^2(x,y,t_1) )\\
\leq & A S (1+  4C  e^{2A(t_2-t_1)} d^2(x,y,t_1) )\leq \frac{\min\{\beta \, \overline{d}^2,1\}}{2}
\end{align*}
which yields a contradiction, if $S(\beta,x,y,g)>0$ is small enough.
\end{proof}
Now we prove Lemma \ref{forwest}. The argumentation is based on \cite[pp. 277-280]{streets2016concentration}. 

\begin{proof}[Proof of Lemma \ref{forwest}]  
Let  $x,y\in M$ be fixed and $t_1,t_2\in [0,1]$, $t_1<t_2$. Initially, we construct an appropriate $\beta$-quasi-forward geodesic in respect of Lemma \ref{tube}. We choose
\begin{align}\label{beta_1}
\beta:=\min_{t\in[t_1,t_2]}{\beta_t}>0 
\end{align}
where 
\begin{equation*}
\beta_t:=\beta(n,diam_{g(t)}(M),f_3(M,g(t)),inj_{g(t)}(M))
\end{equation*}
is chosen according to Lemma \ref{tube} at time $t$. Next, using Lemma \ref{exforw}, we assume the existence of a $\beta$-quasi-forward-geodesic
\begin{equation*}
(\xi_t)_{t\in [t_1,t_2]}: [0,1]\longrightarrow M
\end{equation*}
connecting $x$ and $y$. It is our aim to construct
an appropriate tubular neighborhood around each $\xi_t$ applying Lemma \ref{tube}, the radii $r_t$ shall be time dependent, where $r_0=0$, when $t_1=0$. After doing this, we notice that we are able to estimate 
the integral $\int_{\xi_t}{|\text{grad }\mathcal{F}|\, d\sigma}$ from above against an average integral of $|\text{grad }\mathcal{F}|^2$ along the tube plus an error term. Each of these terms is controllable.

By construction of the $\beta$-quasi-forward-geodesic, we have a finite set of geodesics denoted by $\left(\xi_{t_1+jS}\right)_{j\in \{0,..., \lfloor\frac{t_2-t_1}{S}\rfloor\}}$,
where each of these geodesics is parametrized proportional to arc length, i.e.:
\begin{align*}
|\dot{\xi}_{t_1+jS}|_{g(t_1+jS)}\equiv d(x,y,t_1+jS)\text{ for all }j\in \{0,..., \lfloor\frac{t_2-t_1}{S}\rfloor\}
\end{align*}
we reparametrize each of these curves with respect to arc length, i.e: for each $j\in \{0,..., \lfloor\frac{t_2-t_1}{S}\rfloor\}$ let
\begin{align*}
\varphi_{t_1+jS}&: [0, d(x,y,t_1+jS)]\longrightarrow [0,1]\\
\varphi(s)&:=\frac{s}{d(x,y,t_1+jS)}
\end{align*}
and let
\begin{align*}
\gamma_{t_1+jS}&: [0, d(x,y,t_1+jS)]\longrightarrow M\\
\gamma_{t_1+jS}&:=\xi_{t_1+jS}\circ\varphi_{t_1+jS}
\end{align*}
Of course, these curves are satisfying \eqref{almlg} \eqref{almgeod} and  \eqref{almunit}. 
But we need to get sure that, for each $t\in (t_1+jS,t_1(j+1)S)\cap [t_1,t_2]$, the curve
\begin{equation*}
\gamma_t:=\xi_t\circ \varphi_{t_1+jS}: [0,d(x,y,t_1+jS)]\to M 
\end{equation*}
is also satisfying these assumptions. Here $\beta\in (0,1)$ is defined by \eqref{beta_1}. By construction,
 using \eqref{eq:1.6} for each $t\in (t_1+jS,t_1+(j+1)S)\cap [t_1,t_2]$, we have
\begin{equation*}
\frac{1}{1+\beta_t}\leq\frac{1}{1+\beta}\leq|\dot{\gamma_t}|_{g(t)}=\frac{1}{d(x,y,t_1+jS)}|\dot{\xi_t}|_{g(t)}\leq 1+\beta \leq 1+\beta_t
\end{equation*}
 and, using \eqref{eq:1.5}
\begin{equation*}
|\nabla_{\dot{\gamma_t}}\dot{\gamma_t}|_{g(t)}=\frac{1}{d(x,y,t_1+jS)^2} |\nabla_{\dot{\xi_t}}\dot{\xi_t}|_{g(t)}\leq  \beta \leq \beta_t
\end{equation*}
Thus, by Lemma \ref{tube}, for each time $t\in [jS,(j+1)S)\cap [t_1,t_2]$ the tubular neighborhood $D(\gamma_t,{\rho_t})$ is foliated by $\left(D(\gamma_t(s),{\rho_t})\right)_{s\in [0,d(x,y,t_1+jS)]}$ where
\begin{equation}\label{rhot}
\rho_t:=\mu \min\left\{inj_{g(t)}(M),f_3(M,g(t))^{-\frac{1}{2}} \right\}
\end{equation}
where $\mu>0$ is fixed and the differential of the projection map satisfies \eqref{dpi}.
 For later considerations, we assume that $\mu>0$ is also chosen  compatible to \cite[Lemma 2.7, p. 268]{streets2016concentration}.
Although we have no control on $\beta_t$, we can bound $\rho_t$ from below if we can bound $f_3(M,g(t))^{-\frac{1}{2}}$ from below 
in the view of \eqref{rhot}. 

Using \eqref{boundcurv} and \eqref{eq:2.6} we obtain for each $m\in\{1,2,3\}$:
\begin{equation}\label{derest}
\left\Vert \nabla^m Rm_{g(t)} \right\Vert_{L^{\infty}(M,g(t))}\leq C(m,K) \left(t^{-\frac{1}{2}}\right)^{\frac{2+m}{2}}=C(m,K)  t^{-\frac{2+m}{4}}
\end{equation}
and consequently
\begin{equation*}
f_3(M,g(t))\leq C(K) t^{-\frac{1}{2}}
\end{equation*}
Thus, we have for each $t\in [t_1,t_2]$
\begin{equation}\label{rhotest}
\rho_t\geq \mu \left\{\iota t^{\frac{1}{4}}, C^{-\frac{1}{2}}(K)t^{\frac{1}{4}} \right\}\geq   \mu \min\{\iota, C^{-\frac{1}{2}}(K) \}\cdot t^{\frac{7}{24}}=: R(\iota,K)\cdot t^{\frac{7}{24}}=:r_t(\iota,K)
\end{equation}
Now, we may start to estimate the change of $L(\gamma_t,t)$, where $t\in [t_1+jS,t_1+(j+1)S)\cap [t_1,t_2)$ and $j\in \left\{0,...,\lfloor \frac{t_2-t_1}{S} \rfloor \right\}$. 
From the explicit 
formula in \eqref{eq:2.5} and \eqref{derest} we conclude $|\nabla \text{ grad }\mathcal{F}_{g(t)}|_{g(t)}\leq C_2(K) t^{-\frac{5}{4}}$. Now let $p$ be an arbitrary point on the curve $\gamma_{t_1+jS}$ and $q \in D(p,r_t)$ then we obtain
\begin{equation}\label{gradFest}
|\text{grad }\mathcal{F}_{g(t)}|_{g(t)}(p) \leq | \text{grad }\mathcal{F}_{g(t)}|_{g(t)}(q)+C_3(K) r_t(\iota,K) t^{-\frac{5}{4}}
\end{equation}
In the following, we write $r_t$ instead of $r_t(\iota,K)$ and $\text{grad }\mathcal{F}$ instead of $\text{grad }\mathcal{F}_{g(t)}$. We infer:
\begin{align}
\begin{split}\label{gradtube}
|\text{grad }\mathcal{F}|_{g(t)}(p)=&\frac{\int_{D(p,r_t)}{|\text{grad }\mathcal{F}|_{g(t)}(p)\, dA}}{Area(D(p,r_t))}\\
\leq & \frac{\int_{D(p,r_t)}{\left[| \text{grad }\mathcal{F}|_{g(t)}(q)+C_3(K) r_t t^{-\frac{5}{4}} \right]\, dA}}{Area(D(p,r_t))}\\
= & \frac{\int_{D(p,r_t)}{| \text{grad }\mathcal{F}|_{g(t)} \, dA}}{Area(D(p,r_t))}+ C_3(K) R(\iota,K)  t^{\frac{7}{24}-\frac{5}{4}}\\
\leq & \frac{\left(\int_{D(p,r_t)}{| \text{grad }\mathcal{F}|_{g(t)}^2 \, dA}\right)^{\frac{1}{2}}}{Area^{\frac{1}{2}}(D(p,r_t))}+C_3(K) R(\iota,K) t^{-\frac{23}{24}}
\end{split}
\end{align}
From \cite[Lemma 2.7, p. 268]{streets2016concentration} we obtain for each $t\in [t_1,t_2]$ that
\begin{equation}\label{AreaDest}
Area(D(\gamma_t(s),r_t))\geq c r_t^{3}= c R^{3}(\iota,K)  t^{\frac{7}{8}}
\end{equation}
Inserting this estimate into \eqref{gradtube}, we infer for each $p\in \gamma_{t_1+jS}$
\begin{align}\label{gradtubeII}
\begin{split}
|\text{grad }\mathcal{F}|_{g(t)}(p) \leq & c^{-\frac{1}{2}}R^{-\frac{3}{2}}(\iota,K)  t^{-\frac{7}{16}} \left(\int_{D(p,r_t)}{| \text{grad }\mathcal{F}|_{g(t)}^2 \, dA}\right)^{\frac{1}{2}}\\
&+C_3(K) R(\iota,K) t^{-\frac{23}{24}}
\end{split}
\end{align}
Hence, on $(t_1+jS,t_1+(j+1)S)\cap [t_1,t_2)$ we have
\begin{align*}
&\frac{d}{dt}{L(\gamma_t,t)}=\frac{d}{dt}{L(\gamma_{t_1+jS},t)}\stackrel{\eqref{ap:1}}{\leq}  \int_{\gamma_{t_1+jS}}{|\text{grad }\mathcal{F}|_{g(t)}\, d\sigma}\\
\stackrel{\eqref{gradtubeII}}{\leq} & c^{-\frac{1}{2}}R^{-\frac{3}{2}}(\iota,K) t^{-\frac{7}{16}}\int_{\gamma_{t_1+jS}}{\left(\int_{D(p,r_t)}{|\text{grad }\mathcal{F}|_{g(t)}^2}\, dA \right)^{\frac{1}{2}}\, d\sigma}\\
&+C_3(K) R(\iota,K) t^{-\frac{23}{24}} L(\gamma_{t_1+jS},t)\\
\stackrel{\hphantom{\eqref{gradtubeII}}}{\leq}  & c^{-\frac{1}{2}}R^{-\frac{3}{2}}(\iota,K) t^{-\frac{7}{16}}\left(\int_{\gamma_{t_1+jS}}{\int_{D(p,r_t)}{|\text{grad }\mathcal{F}|_{g(t)}^2}\, dA \, d\sigma}\right)^{\frac{1}{2}}L^{\frac{1}{2}}(\gamma_{t_1+jS},t)\\
&+C_3(K) R(\iota,K) t^{-\frac{23}{24}} L(\gamma_{t_1+jS},t)\\
\stackrel{\eqref{coarea}}{\leq}  & c^{-\frac{1}{2}}R^{-\frac{3}{2}}(\iota,K) t^{-\frac{7}{16}}\sup_{D(\gamma_{t_1+jS},r_t)}{|d\pi|^{\frac{1}{2}}}\left(\int_{M}{|\text{grad }\mathcal{F}|_{g(t)}^2\, dV_{g(t)}}\right)^{\frac{1}{2}}L^{\frac{1}{2}}(\gamma_{t_1+jS},t)\\
&+C_3(K) R(\iota,K) t^{-\frac{23}{24}}  L(\gamma_{t_1+jS},t)\\
\stackrel{\eqref{dpi}}{\leq}  & c_2 R^{-\frac{3}{2}}(\iota,K) t^{-\frac{7}{16}}\left(\int_{M}{|\text{grad }\mathcal{F}|_{g(t)}^2\, dV_{g(t)}}\right)^{\frac{1}{2}}L^{\frac{1}{2}}(\gamma_{t_1+jS},t)\\
&+C_3(K) R(\iota,K) t^{-\frac{23}{24}} L(\gamma_{t_1+jS},t)\\
\stackrel{\hphantom{\eqref{dpi}}}{=}  & c_2 R^{-\frac{3}{2}}R(\iota,K) t^{-\frac{7}{16}}\left(\int_{M}{|\text{grad }\mathcal{F}|_{g(t)}^2\, dV_{g(t)}}\right)^{\frac{1}{2}}L^{\frac{1}{2}}(\gamma_{t},t)\\
&+C_3(K) R(\iota,K) t^{-\frac{23}{24}} L(\gamma_{t},t)
\end{align*}
Using 
\begin{equation}\label{lengthsmallerthanconst}
L(\gamma_{t},t)\stackrel{\eqref{almlg}}{\leq} d(x,y,t_1+jS)+1 \stackrel{\eqref{eq:2.6b}}{\leq} 2(1+D)+1=3+2D
\end{equation}
we conclude
\begin{align*}
 \frac{d}{dt}{L(\gamma_t,t)} \leq &   C(D) R^{-\frac{3}{2}}(\iota,K) t^{-\frac{7}{16}}\left(\int_{M}{|\text{grad }\mathcal{F}|_{g(t)}^2\, dV_{g(t)}}\right)^{\frac{1}{2}}\\
 &+C(K,D) R(\iota,K) t^{-\frac{23}{24}} 
\end{align*}
on $[t_1+jS,t_1+(j+1)S)\cap [t_1,t_2)$ where $j\in \left\{0,...,\lfloor \frac{t_2-t_1}{S} \rfloor \right\}$. Integrating this estimate along $[t_1+jS,t]$ yields:
\begin{align*}
&d(x,y,t)-d(x,y,t_1+jS)=d(x,y,t)-L(\gamma_{t_1+jS},t_1+jS)\\
\leq &L(\gamma_t,t)-L(\gamma_{t_1+jS},t_1+jS)\\
\leq & C(D) R^{-\frac{3}{2}}(\iota,K) \int_{t_1+jS}^{t}{s^{-\frac{7}{16}}\left(\int_{M}{|\text{grad }\mathcal{F}|_{g(s)}^2\, dV_{g(s)}}\right)^{\frac{1}{2}}\, ds}\\
&+C(K,D) R(\iota,K)\int_{t_1+jS}^{t}{s^{-\frac{23}{24}}\, ds}
\end{align*}
for each $t\in(t_1+jS,t_1+(j+1)S)\cap [t_1,t_2]$. In particular, we obtain for each $j\in \{0,...,\lfloor \frac{t_2-t_1}{S}\rfloor-1\}$
\begin{align*}
&d(x,y,t_1+(j+1)S)-d(x,y,t_1+jS) \\
\leq & C(D) R^{-\frac{3}{2}}(\iota,K) \int_{t_1+jS}^{t_1+(j+1)S}{s^{-\frac{7}{16}}\left(\int_{M}{|\text{grad }\mathcal{F}|_{g(s)}^2\, dV_{g(s)}}\right)^{\frac{1}{2}}\, ds}\\
&+C(K,D) R(\iota,K)\int_{t_1+jS}^{t_1+(j+1)S}{s^{-\frac{23}{24}}\, ds}
\end{align*}
and
\begin{align*}
&d(x,y,t_2)-d(x,y,t_1+\lfloor \frac{t_2-t_1}{S}\rfloor S) \\
\leq & C(D) R^{-\frac{3}{2}}(\iota,K) \int_{t_1+\lfloor \frac{t_2-t_1}{S}\rfloor S}^{t_2}{s^{-\frac{7}{16}}\left(\int_{M}{|\text{grad }\mathcal{F}|_{g(s)}^2\, dV_{g(s)}}\right)^{\frac{1}{2}}\, ds}\\
&+C(K,D) R(\iota,K)\int_{t_1+\lfloor \frac{t_2-t_1}{S}\rfloor S}^{t_2}{s^{-\frac{23}{24}}\, ds}
\end{align*}
and consequently
\begin{align*}
&d(x,y,t_2)-d(x,y,t_1)\\
=&\sum_{j=0}^{\lfloor \frac{t_2-t_1}{S}\rfloor-1}{\left[d(x,y,t_1+(j+1)S)-d(x,y,t_1+jS)\right]}\\
&+d(x,y,t_2)-d(x,y,t_1+ \lfloor \frac{t_2-t_1}{S}\rfloor S)\\
\leq &  C(D) R^{-\frac{3}{2}}(\iota,K) \int_{t_1}^{t_2}{s^{-\frac{7}{16}}\left(\int_{M}{|\text{grad }\mathcal{F}|_{g(s)}^2\, dV_{g(s)}}\right)^{\frac{1}{2}}\, ds}\\
&+C(K,D) R(\iota,K)\int_{t_1}^{t_2}{s^{-\frac{23}{24}}\, ds} \\
\leq & C(D) R^{-\frac{3}{2}}(\iota,K) \left(\int_{t_1}^{t_2}{s^{-\frac{7}{8}}\, ds}\right)^{\frac{1}{2}}\left(\int_{t_1}^{t_2}{\int_{M}{|\text{grad }\mathcal{F}|_{g(s)}^2\, dV_{g(s)}}\, ds}\right)^{\frac{1}{2}}\\
&+C(K,D) R(\iota,K)\int_{t_1}^{t_2}{s^{-\frac{23}{24}}\, ds}
\end{align*}
Using \eqref{eq:2.6-}
and \eqref{eq:2.7a} we conclude
\begin{equation*}
d(x,y,t_2)-d(x,y,t_1) \leq C(K,\iota,D)\Lambda^{\frac{1}{2}}\left(t_2^{\frac{1}{8}}-t_1^{\frac{1}{8}}\right)^{\frac{1}{2}} +C(K,\iota,D) \left( t_2^{\frac{1}{24}}-t_1^{\frac{1}{24}} \right)
\end{equation*}
\end{proof}

\subsection{Backward estimates}\label{Backward estimates}
In this subsection we reverse the ideas from Subsection \ref{Forward estimates} in order to prove that, along the  $L^2$-flow,  the distance between two points does not become too small when $t>0$ is small.
\begin{lem}\label{backest}
Let $(M^4,g_0)$ be a closed Riemannian 4-manifold and let \linebreak $(M^4,g(t))_{t\in[0,1]}$ 
be a solution to the flow given in \eqref{eq:2.5} satisfying \eqref{eq:2.6-}, \eqref{eq:2.6}, \eqref{eq:2.6a} and \eqref{eq:2.6b}, i.e.:
\begin{align*}
\int_{M}{|Rm_{g_0}|^2\, dV_{g_0}}&\leq \Lambda \\
\Vert Rm_{g(t)}\Vert_{L^{\infty}(M,g(t))}&\leq K t^{-\frac{1}{2}}\\
inj_{g(t)}(M)&\geq \iota t^{\frac{1}{4}}\\
diam_{g(t)}(M)&\leq 2(1+D)
\end{align*}
for all $t\in(0,1]$. Then we have the following estimate:
\begin{equation}\label{disexback}
d(x,y,t_2)-d(x,y,t_1)\geq - C(K,\iota,D)\Lambda^{\frac{1}{2}}\left(t_2^{\frac{1}{8}}-t_1^{\frac{1}{8}}\right)^{\frac{1}{2}} -C(K,\iota,D) \left( t_2^{\frac{1}{24}}-t_1^{\frac{1}{24}} \right)
\end{equation}
for all $t_1,t_2\in [0,1]$ where $t_1<t_2$.
\end{lem}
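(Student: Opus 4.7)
The proof mirrors that of Lemma \ref{forwest}, with the family of connecting curves now anchored at the \emph{right} endpoint of each subinterval rather than the left. Concretely, I would first introduce a time-reversed counterpart of the $\beta$-quasi-forward-geodesic: a $\beta$-quasi-backward-geodesic $(\gamma_t)_{t\in[t_1,t_2]}$ connecting $x$ and $y$ for which $\gamma_{t_1+(j+1)S}$ is a length-minimizing geodesic of the metric $g(t_1+(j+1)S)$ and $\gamma_t=\gamma_{t_1+(j+1)S}$ for all $t\in(t_1+jS,t_1+(j+1)S]$, with the analogues of \eqref{eq:1.6} and \eqref{eq:1.5} formulated at time $t_1+(j+1)S$. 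Existence for arbitrarily small $\beta>0$ is obtained by a line-by-line adaptation of Lemma \ref{exforw} in which the ODE continuation argument is run backwards in time from the right endpoint of each subinterval; a sufficiently small choice of $S(\beta,x,y,g)>0$ propagates the velocity, geodesic curvature, and length bounds \eqref{almlg}--\eqref{almunit} to the whole subinterval.

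Next, the elementary observations $d(x,y,t_1+jS)\leq L(\gamma_{t_1+(j+1)S},t_1+jS)$ and $d(x,y,t_1+(j+1)S)=L(\gamma_{t_1+(j+1)S},t_1+(j+1)S)$, together with the first variation bound \eqref{ap:1}, give
\begin{equation*}
d(x,y,t_1+jS)-d(x,y,t_1+(j+1)S)\leq\int_{t_1+jS}^{t_1+(j+1)S}\int_{\gamma_s}|\text{grad }\mathcal{F}|_{g(s)}\,d\sigma\,ds.
\end{equation*}
The analogous estimate on the final partial subinterval, followed by a telescoping sum in $j$, then yields
\begin{equation*}
d(x,y,t_1)-d(x,y,t_2)\leq\int_{t_1}^{t_2}\int_{\gamma_s}|\text{grad }\mathcal{F}|_{g(s)}\,d\sigma\,ds.
\end{equation*}

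From this point on I would replay the tubular-averaging argument of the proof of Lemma \ref{forwest} verbatim. Lemma \ref{tube} applied to the arc-length reparametrization of $\gamma_t$ at time $t_1+(j+1)S$ produces a foliated tube $D(\gamma_t,r_t)$ with $r_t\geq R(\iota,K)t^{7/24}$ as in \eqref{rhotest} and with projection bounded by \eqref{dpi}. The pointwise $L^1$--$L^2$ averaging step \eqref{gradtubeII}, the area lower bound \eqref{AreaDest}, the coarea formula \eqref{coarea}, a Cauchy--Schwarz in $s$ coupled with \eqref{lengthsmallerthanconst}, and finally the a priori bound \eqref{eq:2.6-} together with the dissipation identity \eqref{eq:2.7a}, combine to produce the desired inequality \eqref{disexback}.

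The only genuinely new ingredient relative to Subsection \ref{Forward estimates} is the time-reversed quasi-geodesic construction, since the tubular-averaging machinery is insensitive to whether the anchor time is the left or the right endpoint of each subinterval. I expect the principal subtlety to be checking that pinning the length minimizer at $t_1+(j+1)S$ still forces \eqref{almlg}--\eqref{almunit} at earlier times of the same subinterval; this is handled by running the short-time ODE continuation argument of Lemma \ref{exforw} backwards in time from the right endpoint.
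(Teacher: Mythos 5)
Your proposal is correct and follows essentially the same route as the paper: anchor the minimizing geodesic at the later endpoint of each subinterval (the paper's $\beta$-quasi-backward-geodesic, whose existence it obtains by applying Lemma \ref{exforw} to the time-reversed flow $g(t_2+t_1-t)$, which is the same as your backward ODE continuation), then use $d(x,y,\cdot)\leq L$ together with $d=L$ at the anchor time, \eqref{ap:1}, the tubular averaging of Lemma \ref{tube} with \eqref{rhotest}, \eqref{AreaDest}, \eqref{coarea}, \eqref{lengthsmallerthanconst}, and finally \eqref{eq:2.6-} with \eqref{eq:2.7a}, telescoping over the subintervals. The only differences (indexing the partition from $t_2$ downward versus from $t_1$ upward, and telescoping before rather than after the averaging step) are cosmetic.
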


The notion of a $\beta$-quasi-backward-geodesic, which is introduced below, is an analogue to the notion of a $\beta$-quasi-forward-geodesic, introduced in Subsection \ref{Forward estimates}. The slight difference is that now, the minimizing geodesics are chosen at the subinterval ends:
\begin{defi}
Let $(M^n,g(t))_{t\in[t_1,t_2]}$ be a family of complete Riemannian manifolds. Given $\beta>0$ and $x,y\in M$ then we say that a family of curves $(\gamma_t)_{t\in [t_1,t_2]} : [0,1]\longrightarrow M$ is a
$\beta$-quasi-backward-geodesic connecting $x$ and $y$ if $(\gamma_t)_{t\in [t_1,t_2]}$ is a $\beta$-quasi-forward-geodesic connecting $x$ and $y$ with respect to the 
time-reversed flow $(M^n,g(t_2+t_1-t))_{t\in[t_1,t_2]}$, i.e.: there is a constant $S>0$ so that:
\begin{enumerate}
\item For all $t\in [t_1,t_2]$ one has $\gamma_t(0)=x$ and $\gamma_t(1)=y$
\item  For all $j\in\mathbb{N}_0$ such that $t_2-jS \geq t_1$, $\gamma_{t_2-jS}$ is a minimizing geodesic
\item  For all $j\in\mathbb{N}_0$ such that $t_2-jS \geq t_1$, and all $t\in (t_2-(j+1)S,t_2-j S]\cap [t_1,t_2]$ one has $\gamma_t= \gamma_{t_2-jS}$
\item For all $t\in [t_1,t_2]$ one has
\begin{align}
\label{eq:1.4_}
 d(x,y,t)& \leq L(\gamma_t,t)\leq d(x,y,t) + \beta 
\end{align}
\item For all $j\in\mathbb{N}_0$ such that $t_2-j S \geq t_1$, and all $t\in (t_2-(j+1)S,t_2-j S]\cap [t_1,t_2]$ one has 
\begin{align} 
\label{eq:1.6_}
\frac{1}{1+\beta}  d(x,y,t_1-jS) \leq   \left| \dot{\gamma}_t\right|_{g(t)} &\leq  (1+\beta)d(x,y,t_2-jS)\\
\label{eq:1.5_}
|\nabla_{\dot{\gamma}_t}\dot{\gamma}_t|_{g(t)} &\leq \beta \, d^2(x,y,t_2-jS)
\end{align} 
\end{enumerate}
\end{defi}
Applying Lemma \ref{exforw} to  $(M^n,g(t_2+t_1-t))_{t\in[t_1,t_2]}$, we infer
\begin{lem}\label{exback}
Let $(M^n, g(t))_{t\in [t_1,t_2]}$ a smooth family of closed Riemannian manifolds. Given $\beta>0$ and $x,y\in\mathbb{N}$ then there exists a $\beta$-quasi-backward-geodesic connecting $x$ and $y$.
\end{lem}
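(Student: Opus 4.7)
The strategy is already signaled by the one-line preamble before the statement: apply Lemma \ref{exforw} to the time-reversed flow and pull the resulting $\beta$-quasi-forward-geodesic back via the involution $t\mapsto t_1+t_2-t$. There is no genuine analytic obstacle; the argument is a bookkeeping check that the five defining conditions of a $\beta$-quasi-backward-geodesic are the images, under time reversal, of the five defining conditions of a $\beta$-quasi-forward-geodesic.

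Concretely, I would first set $\tilde g(t):=g(t_1+t_2-t)$ for $t\in[t_1,t_2]$. Since $t\mapsto t_1+t_2-t$ is a smooth involution of $[t_1,t_2]$, the family $(M^n,\tilde g(t))_{t\in[t_1,t_2]}$ is again a smooth family of closed Riemannian metrics, to which Lemma \ref{exforw} applies with the same $\beta>0$ and endpoints $x,y\in M$. This produces a $\beta$-quasi-forward-geodesic $(\tilde\gamma_t)_{t\in[t_1,t_2]}:[0,1]\to M$ with an associated constant $S>0$.

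I would then define $\gamma_t:=\tilde\gamma_{t_1+t_2-t}$ and verify the five conditions in turn. Condition (1) is immediate, and for (2)--(3) the key observation is that $t\mapsto t_1+t_2-t$ carries the grid $\{t_1+jS:j\in\mathbb{N}_0\}$ to $\{t_2-jS:j\in\mathbb{N}_0\}$ and the forward subinterval $[t_1+jS,t_1+(j+1)S)\cap[t_1,t_2]$ to the backward subinterval $(t_2-(j+1)S,t_2-jS]\cap[t_1,t_2]$. Hence $\tilde\gamma_{t_1+jS}$ being $\tilde g(t_1+jS)$-minimizing is equivalent to $\gamma_{t_2-jS}$ being $g(t_2-jS)$-minimizing, and the piecewise-constant structure of $\tilde\gamma$ across forward intervals transports to that of $\gamma$ across the reversed intervals.

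The remaining conditions (4) and (5) reduce to the tautological identities $d_{g(t)}(x,y)=d_{\tilde g(t_1+t_2-t)}(x,y)$ and $L(\gamma_t,t)=\tilde L(\tilde\gamma_{t_1+t_2-t},t_1+t_2-t)$, together with the observation that the Levi-Civita connection, velocity and acceleration of $\gamma_t$ with respect to $g(t)$ coincide with those of $\tilde\gamma_{t_1+t_2-t}$ with respect to $\tilde g(t_1+t_2-t)$. Under this dictionary, the estimates of Lemma \ref{exforw} applied to $\tilde\gamma$, indexed by $\tilde d(x,y,t_1+jS)=d(x,y,t_2-jS)$, become exactly \eqref{eq:1.4_}, \eqref{eq:1.6_} and \eqref{eq:1.5_}. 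The only point requiring care, rather than any real difficulty, is keeping track of open versus half-open endpoints in the reversed subintervals so that the piecewise assignment $t\mapsto\gamma_t$ remains well-defined and matches the indexing in the backward definition.
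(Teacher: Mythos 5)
Your proposal is correct and is exactly the paper's argument: the paper obtains Lemma \ref{exback} by the single observation that a $\beta$-quasi-backward-geodesic is, by definition, a $\beta$-quasi-forward-geodesic for the time-reversed flow $(M^n,g(t_2+t_1-t))_{t\in[t_1,t_2]}$, so Lemma \ref{exforw} applied to that reversed family yields the result. Your additional bookkeeping (the involution $t\mapsto t_1+t_2-t$ carrying the grid $t_1+jS$ to $t_2-jS$ and the half-open forward subintervals to the half-open backward ones, with the metric quantities matching tautologically) is precisely the content packed into the ``i.e.'' clause of the backward definition, so nothing is missing.
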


Using this concept, we prove Lemma \ref{backest}:
\begin{proof}[Proof of Lemma \ref{backest}]
The proof is analogous to Lemma \ref{forwest}. We choose $x,y\in M$ and  $t_1,t_2\in [0,1]$ where $t_1<t_2$. It is our aim to construct an appropriate backward-geodesic. As in the proof of Lemma \ref{forwest}, let
\begin{align}\label{beta_2}
\beta:=\min_{t\in[t_1,t_2]}{\beta_t}>0
\end{align}
where 
\begin{equation*}
\beta_t:=\beta(n,diam_{g(t)}(M),f_3(M,g(t)),inj_{g(t)}(M))
\end{equation*}
is defined in Lemma \ref{tube}, let $(\xi_t)_{t\in [t_1,t_2]}$ be a $\beta$-backward-geodesic,  connecting $x$ and $y$, whose existence is ensured by Lemma \ref{exback}. 
As in the proof of Lemma \ref{forwest} we use Lemma \ref{tube} to construct an
appropriate tubular neighborhood around each $\xi_t$, where $t\in [t_1,t_2]$, having a time depend radius $r_t$.

 In this situation 
we have a finite set of geodesics $\left( \xi_{t_2-jS}\right)_{j\in \{0,..., \lfloor\frac{t_2-t_1}{S}\rfloor\}}$ satisfying
\begin{equation*}
|\dot{\xi}_{t_2-jS}|_{g(t_2-jS)}\equiv d(x,y,t_2-jS)\text{ for all }{j\in \{0,..., \lfloor\frac{t_2-t_1}{S}\rfloor\}}
\end{equation*}
 Analogous to the proof of Lemma \ref{forwest}, we reparametrize these curves with respect to arc length, i.e: for each $j\in \{0,..., \lfloor\frac{t_2-t_1}{S}\rfloor\}$ we define
\begin{align*}
\varphi_{t_2-jS}&: [0, d(x,y,t_2-jS)]\longrightarrow [0,1]\\
\varphi(s)&:=\frac{s}{d(x,y,t_2-jS)}\\[1em]
\gamma_{t_2-jS}&: [0, d(x,y,t_2-jS)]\longrightarrow M\\
\gamma_{t_2-jS}&:=\xi_{t_2-jS}\circ\varphi_{t_2-jS}
\end{align*}
and for each  $t\in(t_2-(j+1)S,t_2-jS]\cap [t_1,t_2]$ we define
\begin{equation*}
\gamma_t:=\xi_t\circ \varphi_{t_2-jS}: [0,d(x,y,t_2-jS)]\to M 
\end{equation*}
so that, for each $t\in [t_1,t_2]$ the curve $\gamma_t$ satisfies \eqref{almlg} \eqref{almgeod} and  \eqref{almunit} with respect to $\beta_t$. 
Hence, following Lemma \ref{tube}, at each time $t\in (t_2-(j+1)S,t_2-jS]\cap [t_1,t_2]$ the tubular
neighborhood $D(\gamma_t,\rho_t)$ around $\gamma_t$ is foliated by $\left(D(\gamma_t(s),{\rho_t})\right)_{s\in [0,d(x,y,t_2-jS)]}$ where $\rho_t:=\mu \min\left\{inj_{g(t)}(M),f_3(M,g(t))^{-\frac{1}{2}} \right\}$, again
 $\mu>0$ shall also satisfy the requirements of Lemma \ref{tube}. Using the same arguments as in the proof of Lemma \ref{forwest} we also obtain \eqref{rhotest} and \eqref{gradFest}, i.e.: 
\begin{equation*}
\rho_t \geq R(\iota,K) t^{\frac{7}{24}}=:r_t(\iota,K)\text{ for each }t\in [t_1,t_2]
\end{equation*}
 and 
\begin{equation*}
|\text{grad }\mathcal{F}|_{g(t)}(p)\leq | \text{grad }\mathcal{F}|_{g(t)}(q)+C_3(K) r_t(\iota,K) t^{-\frac{5}{4}}
\end{equation*}
for each $p\in \gamma_{t}=\gamma_{t_2-jS}$ and $q \in D(p,r_t)$ where $t\in (t_2-(j+1)S,t_2-jS]\cap [t_1,t_2] $ and $j\in \{0,...,\lfloor\frac{t_2-t_1}{S}\rfloor \}$. From this we also obtain \eqref{gradtube}, i.e.:
\begin{align*}
| \text{grad }\mathcal{F}|_{g(t)}(p)\leq \frac{\left(\int_{D(p,r_t)}{| \text{grad }\mathcal{F}|_{g(t)}^2(q) \, dA(q)}\right)^{\frac{1}{2}}}{Area^{\frac{1}{2}}(D(p,r_t))}+C_3(K) R(\iota,K) t^{-\frac{23}{24}}
\end{align*}
Using \cite[Lemma 2.7, p. 268]{streets2016concentration} we obtain \eqref{AreaDest}, i.e.: 
\begin{equation*}
Area(D(\gamma_t(s)),r_t)\geq c r_t^{3}= c R^{3}t^{\frac{7}{8}}
\end{equation*}
for all $t\in (t_2-(j+1)S,t_2-jS]\cap [t_1,t_2]$. Hence, for each $j\in \{0,...,\lfloor\frac{t_2-t_1}{S}\rfloor \}$ we infer on $ (t_2-(j+1)S,t_2-jS)\cap (t_1,t_2]$ the following estimate
\begin{align*}
&\frac{d}{dt}{L(\gamma_t,t)}=\frac{d}{dt}{L(t_2-jS,t)}\stackrel{\eqref{ap:1}}{\geq} -\int_{\gamma_{t_2-jS}}{|\text{grad }\mathcal{F}|_{g(t)}\, d\sigma}\\
\geq & - c^{\frac{1}{2}} R^{-\frac{3}{2}}(\iota,K) t^{-\frac{7}{16}}\int_{\gamma_{t_2-jS}}{\left(\int_{D(p,r_t)}{|\text{grad }\mathcal{F}|_{g(t)}^2}\, dA \right)^{\frac{1}{2}}\, d\sigma}\\
&-C_3(K) R(\iota,K) t^{-\frac{23}{24}} L(\gamma_{t_2-jS},t)\\
\geq  & -c^{\frac{1}{2}} R^{-\frac{3}{2}}(\iota,K) t^{-\frac{7}{16}}\left(\int_{\gamma_{t_2-jS}}{\int_{D(p,r_t)}{|\text{grad }\mathcal{F}|_{g(t)}^2}\, dA \, d\sigma}\right)^{\frac{1}{2}}L^{\frac{1}{2}}(\gamma_{t_2-jS},t)\\
&-C_3(K) R(\iota,K) t^{-\frac{23}{24}} L(\gamma_{t_2-jS},t)\\
\stackrel{\eqref{coarea}}{\geq}  & -c^{\frac{1}{2}} R^{-\frac{3}{2}}(\iota,K) t^{-\frac{7}{16}}\sup_{D(\gamma_{t_2-jS},r_t)}{|d\pi|^{\frac{1}{2}}}\left(\int_{M}{|\text{grad }\mathcal{F}|_{g(t)}^2 dV_{g(t)}}\right)^{\frac{1}{2}}L^{\frac{1}{2}}(\gamma_{t_2-jS},t)\\
&-C_3(K) R(\iota,K) t^{-\frac{23}{24}} L(\gamma_{t_2-jS},t)\\
\stackrel{\eqref{dpi}}{\geq}   & -c_2 R^{-\frac{3}{2}}(\iota,K) t^{-\frac{7}{16}}\left(\int_{M}{|\text{grad }\mathcal{F}|_{g(t)}^2\, dV_{g(t)}}\right)^{\frac{1}{2}}L^{\frac{1}{2}}(\gamma_{t_2-jS},t)\\
&-C_3(K) R(\iota,K)  t^{-\frac{23}{24}} L(\gamma_{t_2-jS},t)\\
\geq  &-  C(D)R^{-\frac{3}{2}}(\iota,K) t^{-\frac{7}{16}}\left(\int_{M}{|\text{grad }\mathcal{F}|_{g(t)}^2\, dV_{g(t)}}\right)^{\frac{1}{2}}-C(K,D) R(\iota,K) t^{-\frac{23}{24}} 
\end{align*}
Here we have used the fact that $\gamma_t$ is nearly length minimizing and that the diameter is bounded (cf. \eqref{lengthsmallerthanconst}).
 By integration along  $[t,t_2-jS]$ we conclude for each $t\in(t_2-(j+1)S,t_2-jS]\cap [t_1,t_2]$
\begin{align*}
&d(x,y,t_2-jS)-d(x,y,t)=L(\gamma_{t_2-jS},t_2-jS)-d(x,y,t)\\
\geq &L(\gamma_{t_2-jS},t_2-jS)-L(\gamma_{t},t)\\
\geq &-C(D) R^{-\frac{3}{2}}(\iota,K)\int_{t}^{t_2-jS}{s^{-\frac{7}{16}}\left(\int_{M}{|\text{grad }\mathcal{F}|_{g(s)}^2\, dV_{g(s)}}\right)^{\frac{1}{2}}\, ds}\\
&-C(K,D) R(\iota,K) \int_{t}^{t_2-jS}{s^{-\frac{23}{24}}\, ds}
\end{align*}
In particular, we have for each $j\in \{0,...,\lfloor \frac{t_2-t_1}{S}\rfloor-1 \}$
\begin{align*}
&d(x,y,t_2-jS)-d(x,y,t_2-(j+1)S) \\
\geq &-C(D) R^{-\frac{3}{2}}(\iota,K)\int_{t_2-(j+1)S}^{t_2-jS}{s^{-\frac{7}{16}}\left(\int_{M}{|\text{grad }\mathcal{F}|_{g(s)}^2\, dV_{g(s)}}\right)^{\frac{1}{2}}\, ds}\\
&-C(K,D) R(\iota,K)\int_{t_2-(j+1)S}^{t_2-jS}{s^{-\frac{23}{24}}\, ds}
\end{align*}
and also
\begin{align*}
&d(x,y,t_2-\lfloor \frac{t_2-t_1}{S}\rfloor S)-d(x,y,t_1) \\
\geq - &C(D) R^{-\frac{3}{2}}(\iota,K)\int_{t_1}^{t_2-\lfloor \frac{t_2-t_1}{S}\rfloor S}{s^{-\frac{7}{16}}\left(\int_{M}{|\text{grad }\mathcal{F}|_{g(s)}^2\, dV_{g(s)}}\right)^{\frac{1}{2}}\, ds}\\
&-C(K,D) R(\iota,K)\int_{t_1}^{t_2-\lfloor \frac{t_2-t_1}{S}\rfloor S}{s^{-\frac{23}{24}}\, ds}
\end{align*}
and finally
\begin{align*}
&d(x,y,t_2)-d(x,y,t_1)\\
=&\sum_{j=0}^{\lfloor \frac{t_2-t_1}{S}\rfloor-1}{\left[d(x,y,t_2-jS)-d(x,y,t_2-{(j+1)S})\right]}\\
&+d(x,y,t_2-\lfloor \frac{t_2-t_1}{S}\rfloor S)-d(x,y,t_1)\\
\geq & -C(D)R^{-\frac{3}{2}}(\iota,K)\int_{t_1}^{t_2}{s^{-\frac{7}{16}}\left(\int_{M}{|\text{grad }\mathcal{F}|_{g(s)}^2\, dV_{g(s)}}\right)^{\frac{1}{2}}\, ds}\\
&-C(K,D) R(\iota,K)\int_{t_1}^{t_2}{s^{-\frac{23}{24}}\, ds} \\
\geq &-C(K,D) R^{-\frac{3}{2}}(\iota,K)\left(\int_{t_1}^{t_2}{s^{-\frac{7}{8}}\, ds}\right)^{\frac{1}{2}}\left(\int_{t_1}^{t_2}{\int_{M}{|\text{grad }\mathcal{F}|_{g(s)}^2\, dV_{g(s)}}\, ds}\right)^{\frac{1}{2}}\\
&-C(K,D) R(\iota,K)\int_{t_1}^{t_2}{s^{-\frac{23}{24}}\, ds}
\end{align*}
we infer
\begin{equation*}
d(x,y,t_2)-d(x,y,t_1)\geq  - C(K,\iota,D)\Lambda^{\frac{1}{2}}\left(t_2^{\frac{1}{8}}-t_1^{\frac{1}{8}}\right)^{\frac{1}{2}} -C(K,\iota,D) \left( t_2^{\frac{1}{24}}-t_1^{\frac{1}{24}} \right)
\end{equation*}
\end{proof}

Finally, \eqref{disexforw} and \eqref{disexback} together imply \eqref{disex}, which finishes the proof of  Theorem \ref{th:2a}. Using Theorem \ref{th:2a},
the following result

\begin{corollary}\label{GHclose}
Let $(M^4,g(t))_{t\in[0,1]}$, where Let $M^4$ is a closed Riemannian 4-manifold, be a solution to \eqref{eq:2.5} satisfying the assumptions, \eqref{eq:2.6-}, \eqref{eq:2.6}, \eqref{eq:2.6a} and \eqref{eq:2.6b},
then for each $k\in\mathbb{N}$ there exists $j(k,\Lambda,K,\iota,D)\in\mathbb{N}$ such that
\begin{align*}
 d_{GH}((M,d_{g}),(M,d_{g(t)}))<\frac{1}{k}
\end{align*}
for all $t\in [0,1/j]$
\end{corollary}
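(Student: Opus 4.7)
The plan is to observe that $(M, d_g)$ and $(M, d_{g(t)})$ share the same underlying smooth manifold, so that the identity map furnishes a canonical correspondence whose distortion is controlled pointwise by Theorem \ref{th:2a}. More precisely, whenever $d_1$ and $d_2$ are two metrics on a common set $X$, the diagonal $R = \{(x,x) : x \in X\}$ is a correspondence in the sense of Gromov--Hausdorff theory, and its distortion equals $\sup_{x,y \in X}|d_1(x,y) - d_2(x,y)|$; consequently
$$d_{GH}((X,d_1),(X,d_2)) \leq \tfrac{1}{2}\sup_{x,y \in X}|d_1(x,y) - d_2(x,y)|.$$
I would apply this elementary bound to $X = M$, $d_1 = d_{g(0)}$ and $d_2 = d_{g(t)}$.

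Next, I would insert the distance estimate \eqref{disex} of Theorem \ref{th:2a} with $t_1 = 0$ and $t_2 = t$. The hypotheses of the corollary are precisely those of that theorem, so passing to the supremum over $(x,y) \in M \times M$ in \eqref{disex} yields
$$\sup_{x,y \in M}\bigl|d_{g(0)}(x,y) - d_{g(t)}(x,y)\bigr| \leq C(K,\iota,D)\,\Lambda^{1/2}\, t^{1/16} + C(K,\iota,D)\, t^{1/24}$$
for every $t \in [0,1]$. Since both exponents $1/16$ and $1/24$ are strictly positive, the right-hand side tends to zero as $t \to 0^+$. Therefore, for any prescribed $k \in \mathbb{N}$, one chooses $j = j(k,\Lambda,K,\iota,D) \in \mathbb{N}$ large enough that the right-hand side above is less than $2/k$ whenever $t \in [0, 1/j]$, and combining with the correspondence bound gives the asserted inequality $d_{GH}((M,d_g),(M,d_{g(t)})) < 1/k$.

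I do not expect a genuine obstacle here: the corollary is a direct consequence of Theorem \ref{th:2a} together with the standard ``diagonal correspondence'' inequality. The one point that merits checking is the admissibility of the endpoint $t_1 = 0$ in Theorem \ref{th:2a}, but this is already built into its statement and is consistent with the proofs of Lemma \ref{forwest} and Lemma \ref{backest}, in which the relevant time integrals $\int_0^{t} s^{-7/8}\,ds$ and $\int_0^{t} s^{-23/24}\,ds$ converge at the origin, so no additional regularization at $t_1 = 0$ is required.
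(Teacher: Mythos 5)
Your proposal is correct and follows essentially the same route as the paper: the paper proves the corollary by combining Theorem \ref{th:2a} (with $t_1=0$, $t_2=t$) with Lemma \ref{distGH}, which is exactly your diagonal-correspondence distortion bound $d_{GH}\leq \tfrac{1}{2}\sup_{x,y}|d_{g(0)}(x,y)-d_{g(t)}(x,y)|$. Your observation that the right-hand side $C\Lambda^{1/2}t^{1/16}+Ct^{1/24}$ tends to zero, so $j(k,\Lambda,K,\iota,D)$ can be chosen accordingly, matches the paper's argument.
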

is a consequence of the following Lemma
\begin{lem}\label{distGH}
Let $M^n$ be a closed manifold. Given two metrics $g_1$ and $g_2$ on $M$ satisfying 
\begin{equation*}
\sup_{x,y\in M}{|d_{g_1}(x,y)-d_{g_2}(x,y)|}< \epsilon
\end{equation*}
then we have 
\begin{equation*}
d_{GH}((M,d_{g_1}),(M,d_{g_2}))<\frac{\epsilon}{2}
\end{equation*}
\end{lem}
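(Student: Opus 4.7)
The plan is to use the correspondence characterization of the Gromov-Hausdorff distance, namely that for any two compact metric spaces $(X,d_X)$ and $(Y,d_Y)$ one has
\begin{equation*}
d_{GH}(X,Y)=\frac{1}{2}\inf_{R}\,\mathrm{dis}(R),
\end{equation*}
where the infimum runs over all correspondences $R\subseteq X\times Y$ (subsets whose two coordinate projections are both surjective) and
\begin{equation*}
\mathrm{dis}(R):=\sup\bigl\{|d_X(x,x')-d_Y(y,y')|:(x,y),(x',y')\in R\bigr\}.
\end{equation*}
This identity is standard (e.g.\ Burago--Burago--Ivanov); since both $(M,d_{g_1})$ and $(M,d_{g_2})$ are compact (the underlying manifold $M$ is closed), it applies in our setting.

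The natural candidate correspondence is the diagonal
\begin{equation*}
R:=\{(x,x):x\in M\}\subseteq M\times M,
\end{equation*}
which is a correspondence because both projections are the identity on $M$, hence surjective. I would then compute the distortion of $R$ directly:
\begin{equation*}
\mathrm{dis}(R)=\sup_{(x,x),(y,y)\in R}|d_{g_1}(x,y)-d_{g_2}(x,y)|=\sup_{x,y\in M}|d_{g_1}(x,y)-d_{g_2}(x,y)|<\epsilon,
\end{equation*}
where the last inequality is the hypothesis.

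Combining the two displays gives
\begin{equation*}
d_{GH}\bigl((M,d_{g_1}),(M,d_{g_2})\bigr)\le \tfrac{1}{2}\,\mathrm{dis}(R)<\tfrac{\epsilon}{2},
\end{equation*}
which is the desired estimate. There is really no obstacle here; the only thing to be mindful of is to use the correspondence formulation so that the factor $\tfrac{1}{2}$ appears correctly, rather than the $\epsilon$-isometry formulation which would yield a bound of the form $2\epsilon$ with a worse constant. If the paper prefers the original embedding definition of $d_{GH}$, one can alternatively embed $M\sqcup M$ in the disjoint union equipped with the pseudo-metric that extends $d_{g_1}$ on the first copy, $d_{g_2}$ on the second copy, and $d(x,y):=\tfrac{1}{2}(d_{g_1}(x,z)+d_{g_2}(z,y))+\tfrac{\epsilon}{2}$ for $x$ in the first and $y$ in the second copy, and verify the triangle inequality using the hypothesis; this gives the same bound.
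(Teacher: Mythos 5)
Your proof is correct and matches the paper's argument essentially verbatim: the paper also takes the diagonal correspondence $\{(x,x):x\in M\}$, identifies its distortion with $\sup_{x,y\in M}|d_{g_1}(x,y)-d_{g_2}(x,y)|<\epsilon$, and invokes the correspondence bound $d_{GH}\le\tfrac{1}{2}\,\mathrm{dis}$ from Burago--Burago--Ivanov (Theorem 7.3.25). The alternative embedding construction you sketch at the end is not needed but is consistent with the same estimate.
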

\begin{proof}
The set $\mathfrak{R}:=\{(x,x)\in M\times M\, |\,x\in M \}$ is a correspondence between $M$ and $M$ itself (cf. \cite[Definition 7.3.17., p. 256]{burago2001course}) and the distorsion of $\mathfrak{R}$ (cf. \cite[Definition 7.3.21., p. 257]{burago2001course}) is :
\begin{equation*}
\text{dis}\,\mathfrak{R}=\sup_{x,y\in M}{|d_{g_1}(x,y)-d_{g_2}(x,y)|}< \epsilon
\end{equation*}
From \cite[Theorem 7.3.25., p. 257]{burago2001course} we obtain
\begin{equation*}
d_{GH}((M,d_{g_1}),(M,d_{g_2}))\leq \frac{1}{2}\text{dis}\,\mathfrak{R}<\frac{1}{2} \epsilon
\end{equation*}
\end{proof}

\section{Proof of Theorem \ref{th:1}}\label{sec:3}
In this section we prove Theorem \ref{th:1} using Corollary \ref{GHclose}. The conditions \eqref{eq:2.6-}, \eqref{eq:2.6}, \eqref{eq:2.6a} and \eqref{eq:2.6b} are ensured by the following result
\begin{theorem}\label{strflow}{(cf. \cite[Theorem 1.8, p. 260]{streets2016concentration})}\label{th:1a}
Given $\delta\in (0,1)$, there are constants $\epsilon(\delta),\iota(\delta),A(\delta)>0$ so that if $(M^4,g_0)$ is a closed Riemannian manifold satisfying the following conditions
\begin{align}
\notag\mathcal{F}_{g_0}&\leq \epsilon\\
\label{eq:1.2}
Vol_{g_0}(B_{g_0}(x,r))&\geq \delta \omega_4 r^4\hspace{1cm}\forall x\in M, r\in [0,1]
\end{align}
then the flow given in \eqref{eq:2.5} with initial metric $g_0$ has a solution on $[0,1]$ and we have the following estimates:
\begin{align*}
\left\Vert Rm_{g(t)} \right\Vert_{L^{\infty}(M,g(t))}&\leq A \mathcal{F}^{\frac{1}{6}}_{g(t)}t^{-\frac{1}{2}} \\
inj_{g(t)}(M)&\geq \iota t^{\frac{1}{4}} \\
diam_{g(t)}(M)&\leq 2(1 +diam_{g(0)}(M))
\end{align*}
for all $t\in (0,1]$.
\end{theorem}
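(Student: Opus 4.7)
The statement is Streets' main existence theorem for the $L^2$-flow, so my plan mirrors the strategy in \cite{streets2016concentration}. Short-time existence of \eqref{eq:2.5} on some maximal interval $[0, T_{\max})$ is provided by \cite[Theorem 3.1]{streets2008gradient}, and the goal is to prove $T_{\max} \geq 1$ together with the three quantitative estimates. The backbone of the proof is a continuity/bootstrap argument on $[0,1]$ driven by a parabolic $\epsilon$-regularity lemma.

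\textbf{$\epsilon$-regularity and curvature decay.} The central technical ingredient is to establish a parabolic $\epsilon$-regularity result for the $L^2$-flow: there exist $\epsilon_1, A_1 > 0$, depending only on $\delta$, such that whenever $\int_{B_{g(t_0)}(x_0, r)} |Rm_{g(t_0)}|^2 \, dV_{g(t_0)} \leq \epsilon_1$ and $t_0 \leq r^4$, one has
\begin{equation*}
|Rm_{g(t_0)}|(x_0) \leq A_1 \left(\int_{B_{g(t_0)}(x_0, r)} |Rm_{g(t_0)}|^2 \, dV_{g(t_0)}\right)^{1/6} r^{-2}.
\end{equation*}
This comes from the reaction-diffusion inequality of the schematic form $(\partial_t + \Delta^2)|Rm|^2 \leq C|Rm|^4 + \text{l.o.t.}$ combined with parabolic Moser iteration on cutoff regions, where the Sobolev constant is controlled uniformly by the non-collapsing hypothesis \eqref{eq:1.2}. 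Coupled with the monotonicity $\mathcal{F}(g(t)) \leq \mathcal{F}(g_0) \leq \epsilon$ of the functional along its own gradient flow and the choice $r \sim t^{1/4}$, this delivers $\|Rm_{g(t)}\|_{L^{\infty}(M,g(t))} \leq A\,\mathcal{F}_{g(t)}^{1/6}\, t^{-1/2}$ on any subinterval where the hypotheses of the continuity argument remain in force. Shi-type bootstrap then supplies the derivative bounds needed to prevent finite-time singularity formation, so existence extends as long as the curvature estimate holds.

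\textbf{Non-collapsing propagation, injectivity radius, and diameter.} Once the curvature estimate is available, Cheeger--Gromov--Taylor converts it together with a preserved lower bound on volume ratios of small balls into $inj_{g(t)}(M) \geq \iota\, t^{1/4}$. Preservation of non-collapsing requires integrating the metric evolution $\left|\tfrac{\partial g}{\partial t}\right|_{g(t)} \leq C|Rm_{g(t)}|^2$ in time; the pointwise bound $|Rm|^2 \leq A^2\,\mathcal{F}^{1/3}\,t^{-1}$ is borderline integrable, but the prefactor $\mathcal{F}^{1/3}$ can be shown to be small (it vanishes with $\epsilon \to 0$), so the non-collapsing constant is only slightly degraded. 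The diameter bound follows from the same integration: total distance change is controlled by $\int_0^1 \|\partial_t g\|_{\infty}^{1/2}\, dt$, which is small when $\epsilon$ is small, so $diam_{g(t)}(M) \leq 2(1 + diam_{g_0}(M))$ holds with room to spare.

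\textbf{Main obstacle.} The principal difficulty is the self-referential nature of the bootstrap: non-collapsing controls the Sobolev constant that feeds into $\epsilon$-regularity; the resulting curvature estimate together with non-collapsing yields the injectivity radius via Cheeger--Gromov--Taylor; and the injectivity radius in turn helps propagate non-collapsing. Closing this loop demands a careful choice of $\epsilon(\delta)$, $\iota(\delta)$, and $A(\delta)$ so that on the closed side of the continuity argument the bootstrapped constants strictly improve upon the hypothesized ones, making the set of bad times open and hence empty. The $\mathcal{F}^{1/6}$ prefactor in the curvature bound is not cosmetic: it is precisely what supplies the extra smallness needed to integrate the naive $t^{-1/2}$ estimate against the singular weight coming from evolution of volume at $t \to 0$, and hence to keep the whole bootstrap self-consistent.
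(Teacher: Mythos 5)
First, note that the paper does not prove this statement at all: Theorem \ref{strflow} is quoted verbatim from Streets (\cite[Theorem 1.8]{streets2016concentration}), so there is no in-paper argument to compare against; what you have written is an attempted reconstruction of Streets' proof. As a reconstruction it gestures at the right circle of ideas (monotonicity of $\mathcal{F}$, a smoothing/$\epsilon$-regularity estimate, a continuity--blow-up scheme, injectivity radius from volume plus curvature bounds), but it defers all of the actual analytic content: the parabolic $\epsilon$-regularity statement with the exponent $\tfrac{1}{6}$, the control of the Sobolev constant, and the Shi-type derivative bounds are asserted rather than derived, and for a fourth-order flow none of these are routine (there is no maximum principle, and the $\tfrac16$ exponent comes from specific interpolation inequalities in Streets' earlier papers, not from a generic Moser iteration).

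More seriously, the steps you do spell out for non-collapsing and for the diameter bound would fail. For the $L^2$-flow one has $\partial_t g=-\mathrm{grad}\,\mathcal{F}=-2\delta d\,Rc+2\check R-\tfrac12|Rm|^2 g$, which contains two covariant derivatives of curvature, so $|\partial_t g|\le C|Rm|^2$ is false; with the decay $\Vert Rm\Vert_\infty\le A\mathcal{F}^{1/6}t^{-1/2}$ and the derivative estimates \eqref{boundcurv}--\eqref{derbound} the best pointwise bound is $\Vert\partial_t g\Vert_\infty\le Ct^{-1}$. A bound of order $t^{-1}$ is not ``borderline integrable up to a small prefactor'': $\int_0^1 \epsilon^{1/3}t^{-1}\,dt$ diverges for every $\epsilon>0$, so smallness of $\mathcal{F}^{1/3}$ does not rescue the integration, and there is no valid estimate controlling distance distortion by $\int_0^1\Vert\partial_t g\Vert_\infty^{1/2}\,dt$ (the correct length estimates, \eqref{ap:1} and \eqref{ap:1a}, involve the full norm $\Vert\partial_t g\Vert_\infty$, whose time integral is infinite here). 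This non-integrability is exactly the obstruction emphasized in Section \ref{sec:2} of the paper, and it is why Streets' proof (reproduced in this paper, for the analogous Theorem \ref{existenceII}, in Section \ref{prth12}) controls distances and the diameter not by sup-norm integration but by the tubular averaging technique: $|\mathrm{grad}\,\mathcal{F}|$ along an almost-geodesic is averaged over a foliated tube of radius $\sim t^{7/24}$ and estimated through the spacetime bound $\int_0^1\!\!\int_M|\mathrm{grad}\,\mathcal{F}|^2\,dV\,dt\le\epsilon$ coming from energy monotonicity. Likewise, volume non-collapsing is not propagated by integrating $\Vert\partial_t g\Vert_\infty$: total volume is exactly conserved (Lemma \ref{L2volestimate}, a consequence of scale invariance of $\mathcal{F}$ in dimension $4$), and local volumes are controlled through the same spacetime $L^2$ bound on $\mathrm{grad}\,\mathcal{F}$ as in \eqref{Volopenset}. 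Without replacing your sup-norm integrations by arguments of this type, the diameter estimate and the non-collapsing propagation -- and hence the closing of your continuity loop -- do not go through.
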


From these estimates we may conclude the following precompactness result, at first
\begin{corollary}\label{curvsmallprecompactness}
Given $D,\delta>0$. Then there exists $\epsilon(\delta)>0$ so that the space $\mathcal{M}^4(D,\delta,\epsilon(\delta))$ \index{$\mathcal{M}^4(D,\delta,\epsilon(\delta))$} which consists of the set of all closed Riemannian $4$-manifolds $(M,g)$ satisfying
\begin{align*}
diam_g(M)&\leq D\\
Vol_{g}(B_{g}(x,r))&\geq \delta \omega_4 r^4\hspace{1cm}\forall x\in M, r\in [0,1] \\
\Vert Rm_g\Vert_{L^2(M,g)}&\leq \epsilon^2
\end{align*}
equipped with the Gromov-Hausdorff topology, is precompact.
\end{corollary}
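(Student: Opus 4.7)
The plan is to use the $L^2$-flow as a smoothing device that converts the rough $L^2$-curvature bound into a genuine $L^\infty$-bound on a small time interval, to which the classical Anderson precompactness result applies; the distance estimate already proved (packaged as Corollary \ref{GHclose}) then transports Gromov--Hausdorff convergence back to the original, unsmoothed sequence.

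Concretely, first choose $\epsilon(\delta)>0$ small enough that $\epsilon^4 \leq \epsilon_{\mathrm{St}}(\delta)$, where $\epsilon_{\mathrm{St}}(\delta)>0$ denotes the smallness constant furnished by Theorem \ref{strflow}. Take an arbitrary sequence $(M_i,g_i) \in \mathcal{M}^4(D,\delta,\epsilon(\delta))$. Since $\mathcal{F}_{g_i}=\Vert Rm_{g_i}\Vert_{L^2}^2 \leq \epsilon^4 \leq \epsilon_{\mathrm{St}}$ and the non-collapsing assumption is in force, Theorem \ref{strflow} yields solutions $g_i(t)$ of \eqref{eq:2.5} on $[0,1]$ satisfying \eqref{eq:2.6-}--\eqref{eq:2.6b} with constants independent of $i$, namely $\Lambda=\epsilon^4$, $K=A(\delta)\epsilon^{2/3}$, $\iota=\iota(\delta)$, and diameter bounded by $2(1+D)$ (using also the monotonicity of $\mathcal{F}$ along the flow). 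Corollary \ref{GHclose} then supplies, for every $k\in\mathbb{N}$, an integer $j_k$ depending only on $k,\delta,D,\epsilon$ (and in particular not on $i$) such that
\begin{equation*}
d_{GH}\bigl((M_i,d_{g_i}),(M_i,d_{g_i(1/j_k)})\bigr) < \tfrac{1}{k} \qquad \text{for every } i\in\mathbb{N}.
\end{equation*}

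For each fixed $k$ the smoothed sequence $(M_i,g_i(1/j_k))_{i\in\mathbb{N}}$ enjoys uniform bounds $\Vert Rm\Vert_\infty \leq A\epsilon^{2/3} j_k^{1/2}$, $\mathrm{inj}\geq \iota j_k^{-1/4}$ and $\mathrm{diam}\leq 2(1+D)$, so Anderson's theorem \cite[Theorem 2.2]{anderson1989ricci} delivers a subsequence converging in $C^{1,\alpha}$, and a fortiori in the Gromov--Hausdorff sense. A standard diagonal extraction across $k$ produces a single subsequence $(M_{i_n},g_{i_n})_{n\in\mathbb{N}}$ for which $(M_{i_n},g_{i_n}(1/j_k))_{n\in\mathbb{N}}$ is GH-Cauchy for every fixed $k$.

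The triangle inequality for $d_{GH}$ then shows that $(M_{i_n},g_{i_n})$ is itself GH-Cauchy: given $\eta>0$, choose $k$ with $2/k<\eta/2$, and for sufficiently large $n,m$ estimate
\begin{equation*}
d_{GH}\bigl((M_{i_n},g_{i_n}),(M_{i_m},g_{i_m})\bigr)\leq \tfrac{1}{k}+d_{GH}\bigl((M_{i_n},g_{i_n}(1/j_k)),(M_{i_m},g_{i_m}(1/j_k))\bigr)+\tfrac{1}{k}<\eta.
\end{equation*}
Completeness of the Gromov--Hausdorff topology on (isometry classes of) compact metric spaces then yields a GH-limit, proving precompactness. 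I do not expect a genuine obstacle here: the delicate analytic input, namely the existence of the flow with controlled geometric constants and the bridging distance estimate, has been supplied by Theorem \ref{strflow} and Theorem \ref{th:2a}, and what remains is a routine diagonal/triangle argument whose only subtlety is verifying that the integer $j_k$ from Corollary \ref{GHclose} depends on the sequence only through the universal constants $\Lambda,K,\iota,D$, which is indeed the case by the explicit dependence displayed in \eqref{disex}.
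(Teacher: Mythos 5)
Your argument is essentially correct, but it follows a genuinely different (and heavier) route than the paper. The paper uses the flow only once, and only to get a volume bound: since the $L^2$-flow preserves the total volume (\eqref{VolInvariant}) and at time $t=1$ the smoothed metric has $\Vert Rm\Vert_{L^\infty}\leq 1$ and $diam\leq 2(1+D)$, Bishop--Gromov gives $Vol_{g}(M)=Vol_{g(1)}(M)\leq V_0(D)$; combined with the non-collapsing hypothesis on the \emph{original} metric this bounds the cardinality of any maximal $r$-separated set, so the family is uniformly totally bounded and Gromov's criterion \cite[Theorem 7.4.15]{burago2001course} gives precompactness directly. In particular the paper's proof never invokes Theorem \ref{th:2a}, Corollary \ref{GHclose}, or Anderson's theorem for this corollary. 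Your proof instead runs the full smoothing-plus-transport scheme that the paper reserves for Theorem \ref{th:1}: Corollary \ref{GHclose} with constants $\Lambda,K,\iota,D$ uniform in $i$ (which you correctly verify), Anderson compactness of the smoothed metrics at each time $1/j_k$, a diagonal extraction, the triangle inequality, and completeness of the Gromov--Hausdorff metric on compact spaces. This works, with one small point you should make explicit: Anderson's theorem as used here also needs two-sided volume bounds, which do follow from your uniform data (upper bound by Bishop--Gromov from the curvature and diameter bounds at the fixed time $1/j_k$, lower bound from the injectivity radius bound together with the curvature bound), or you could bypass Anderson entirely and just quote Gromov precompactness under a lower Ricci bound and diameter bound, since you only need a GH-Cauchy subsequence of the smoothed family. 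The trade-off: your route buys a stronger qualitative picture (the original metrics are GH-close to metrics with controlled $C^{k}$ geometry), while the paper's route is shorter, more elementary, and independent of the distance estimates of Section \ref{sec:2}.
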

\begin{proof}
Let $(M,g)$ be an element in $\mathcal{M}^4(D,\delta,\epsilon(\delta))$.
Using Theorem \ref{strflow} we know that the $L^2$-flow with initial metric $g$ exists on the time interval $[0,1]$. Together with \eqref{eq:2.7} we ensure that the following estimates are valid 
\begin{align*}
\left\Vert Rm_{g(t)} \right\Vert_{L^{\infty}(M,g(t))}&\leq A \mathcal{F}^{\frac{1}{6}}_{g(t)}t^{-\frac{1}{2}}\stackrel{\eqref{eq:2.7}}{\leq} A \mathcal{F}^{\frac{1}{6}}_{g(0)}t^{-\frac{1}{2}}\leq  t^{-\frac{1}{2}}  \\
diam_{g(t)}(M)&\leq 2(1 +D)
\end{align*}
Hence, from the Bishop-Gromov comparison principle (cf. \cite[Lemma 36. p. 269]{petersen2006riemannian}) we infer
\begin{equation}\label{VolBound}
Vol_{g(0)}(M)\stackrel{\eqref{VolInvariant}}{=}Vol_{g(1)}(M)=Vol_{g(1)}{B_{g(1)}(x,2(1+D))}\leq V_0(D)
\end{equation}

Now, let $\{ x_1, ..., x_{N(M,g)} \} \subseteq M$ be a maximal $r$-separated set, which implies that $\{ x_1, ..., x_N \}$
 is an $r$-net. In this situation the balls 
\begin{equation*}
B_{g}(x_1,\frac{r}{2}),..., B_{g}(x_N,\frac{r}{2})
\end{equation*}
are mutually disjoint and the balls  $B_{g}(x_1,r),..., B_{g}(x_N,r)$ cover  $M$.
Using the non-collapsing assumption (cf. \eqref{eq:1.2}), we infer
\begin{align*}
N \omega_4 \delta \left( \frac{r}{2} \right)^n \leq  &\sum_{k=1}^N{ Vol_{g}(B_{g}(x_k,\frac{r}{2}}))\\
= &Vol_{g}(\bigcup_{k=1}^N{B_{g}(x_k,\frac{\epsilon}{2}}))\leq Vol_{g}(M)\stackrel{\eqref{VolBound}}{\leq} V_0(D)
\end{align*}
This implies that the number of elements in such an $r$-net is bounded from above by a natural number $N(r,\delta,D)$. The assertion follows from \cite[Theorem 7.4.15, p. 264]{burago2001course}.
\end{proof}

\begin{proof}[Proof of Theorem \ref{th:1}]
As in the proof of Corollary \ref{curvsmallprecompactness}, we know that for each $i\in \mathbb{N}$ the $L^2$-flow with initial metric $g_i$ exists on $[0,1]$ and that this flow satisfies the following estimates
\begin{align}\label{curvtozero}
\begin{split}
\left\Vert Rm_{g_i(t)} \right\Vert_{L^{\infty}(M,g_i(t))}&\leq A \mathcal{F}^{\frac{1}{6}}_{g_i(t)}t^{-\frac{1}{2}}\stackrel{\eqref{eq:2.7}}{\leq} A \left(\frac{1}{i} \right)^{\frac{1}{6}} t^{-\frac{1}{2}}\leq  t^{-\frac{1}{2}}  \\
inj_{g_i(t)}(M)&\geq \iota t^{\frac{1}{4}} \\
diam_{g_i(t)}(M)&\leq 2(1 +D)
\end{split}
\end{align}
for all $t\in (0,1]$.
Using Corollary \ref{GHclose}, we may choose a monotone decreasing sequence $(t_j)_{j\in\mathbb{N}}\subseteq (0,1]$ that converges to zero and that satisfies
\begin{equation*}
d_{GH}((M_i,g_i),(M_i,g_i(t_j)))<\frac{1}{3j}\hspace{1cm}\forall i,j\in\mathbb{N}
\end{equation*}
Estimate \eqref{boundcurv} implies, that for each $m\in \mathbb{N}$
\begin{equation}
\left\Vert \nabla^m Rm_{g_i(t_j)} \right\Vert_{L^{\infty}(M_i,g_i(t_j))}\leq C(m)  t_j^{-\frac{2+m}{4}}\hspace{1cm}\forall i,j\in\mathbb{N}
\end{equation}
As in the proof of Corollary \ref{curvsmallprecompactness} we also have
\begin{equation*}
v_0(D,\delta)\leq Vol_{g_i(t_j)}{(M_i)}=Vol_{g_i(1)}{(M_i)}\leq V_0(D)
\end{equation*}
where we have used the non-collapsing assumption in order to prove the lower bound.
Hence, at each time $t_j$, we are able to apply \cite[Theorem 2.2, pp. 464-466]{anderson1989ricci}  to the sequence of manifolds $(M_i,g_i(t_j))_{i\in\mathbb{N}}$, i.e.: for all $j\in\mathbb{N}$ there exists
a subsequence $(M_{i(j,k)},g_{i(j,k)}(t_j))_{k\in\mathbb{N}}$ converging in the $C^{m,\alpha}$-sense, where $m\in\mathbb{N}$ is arbitrary, to a smooth manifold $(N_j,h_j)$
as $k$ tends to infinity. We may assume that the
selection process is organized so that each sequence $(M_{i(j,k)},g_{i(j,k)}(t_j))_{k\in\mathbb{N}}$ is a subsequence of $(M_{i(j-1,k)},g_{i(j-1,k)}(t_j))_{k\in\mathbb{N}}$. 
The smooth convergence together with \eqref{curvtozero} implies $Rm_{h_j}\equiv 0$ for each $j\in\mathbb{N}$. 

In order to apply \cite[Theorem 2.2, pp. 464-466]{anderson1989ricci} to the sequence $(N_j,h_j)_{j\in\mathbb{N}}$, we need an argument for a uniform lower bound on the injectivity radius because the injectivity radius estimate in \eqref{curvtozero} is not convenient. 
To overcome this issue, we recall that the volume of balls does not decay to quickly along the flow (cf. Lemma \ref{L2volestimate}) and the convergence is smooth. So, the volume of suitable balls is well-controlled
from below. Since $(N_j,h_j)$ is flat, we are able to apply \cite[Theorem 4.7, pp. 47-48]{cheeger1982finite}, which yields a uniform lower bound on the injectivity radius for each $(N_j,h_j)$. Hence, there exists a subsequence
of $(N_j,h_j)_{j\in\mathbb{N}}$ that converges in the $C^{\infty}$-sense, to a flat manifold $(M,g)$. 
 Finally we need to get sure
 that $(M_i,g_i)_{i\in\mathbb{N}}$ contains a subsequence that also converges to $(M,g)$, at least in the Gromov-Hausdorff sense.
For each $m\in\mathbb{N}$, we choose $j(m)\geq m$ so that
\begin{align*}
d_{GH}((M,g),(N_{j(m)},h_{j(m)}))\leq \frac{1}{3m}
\end{align*}
and $k(m)\in\mathbb{N}$ so that
\begin{equation*}
d_{GH}((N_{j(m)},h_{j(m)}),(M_{i(j(m),k(m))},g_{i(j(m),k(m))}(t_{j(m)}))\leq \frac{1}{3m}
\end{equation*}
This implies
\begin{align*}
&d_{GH}((M,g),(M_{i(j(m),k(m))},g_{i(j(m),k(m))})\\
\leq &d_{GH}((M,g),(N_{j(m)},h_{j(m)}))\\
&+d_{GH}((N_{j(m)},h_{j(m)}),(M_{i(j(m),k(m))},g_{i(j(m),k(m))}(t_{j(m)}))\\
&+d_{GH}((M_{i(j(m),k(m))},g_{i(j(m),k(m))}(t_{j(m)}),(M_{i(j(m),k(m))},g_{i(j(m),k(m))})))\\
\leq & \frac{1}{3m}+\frac{1}{3m}+\frac{1}{3j(m)}\leq  \frac{1}{3m}+\frac{1}{3m}+\frac{1}{3m}=\frac{1}{m}
\end{align*}
and this implies, that the sequence $(M_{i(j(m),k(m))},g_{i(j(m),k(m))})_{m\in\mathbb{N}}$ converges with respect to the Gromov-Hausdorff topology to $(M,g)$ as $m$ tends to infinity.
\end{proof}

\section{Proof of Theorem \ref{th:1aa}}\label{prth12}

In order to apply Theorem \ref{th:2a} to the situation in Theorem \ref{th:1aa} we give a proof of the following existence result

\begin{theorem}\label{existenceII}
\label{th:4.2}
Let $D,\Lambda>0$. Then there are universal constants $\delta\in(0,1)$, $K>0$ and constants $\epsilon(\Lambda),T(\Lambda)>0$ satisfying the following property: Let $(M,g)$ be a closed Riemannian $4$-manifold satisfying
\begin{align*}
diam_g(M)&\leq D\\
\Vert Rm_g \Vert_{L^2(M,g)}&\leq \Lambda\\
Vol_g(B_g(x,r))&\geq \delta \omega_4 r^4\hspace{1cm}\forall x\in M,\, r\in [0,1]\\
\Vert \mathring{Rc}_g \Vert_{L^2(M,g)}&\leq \epsilon
\end{align*}
then the $L^2$-flow exists on $[0,T]$, and we have the following estimates:
\begin{align}\label{eq:Rminj}
\begin{split}
\Vert Rm_{g(t)}\Vert _{L^{\infty}(M,g(t))}&\leq K t^{-\frac{1}{2}}\\
inj_{g(t)}(M)&\geq t^{\frac{1}{4}}
\end{split}
\end{align}
and
\begin{equation}\label{diamestimate}
diam_{g(t)}(M)\leq 2(1+D)
\end{equation}
for all $t\in (0,T]$.
\end{theorem}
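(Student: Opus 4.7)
The plan is to combine Streets' general short-time existence for the $L^2$-flow with a parabolic blow-up argument, exploiting the scale-invariance of $\mathcal{F}$ and of $\|\mathring{Rc}\|_{L^2}$ in dimension four. First, I would invoke Streets' short-time existence (cf.\ \cite[Theorem 3.1]{streets2008gradient}) to obtain, for each admissible initial datum $(M,g)$, a maximal smooth solution of \eqref{eq:2.5} on some interval $[0,T_{\max})$. Let $T^{\ast}$ denote the supremum of those $T \in (0,T_{\max}]$ for which \eqref{eq:Rminj} and \eqref{diamestimate} hold on $[0,T]$ with the prescribed universal constant $K$. The theorem reduces to showing a uniform lower bound $T^{\ast} \geq T(\Lambda)>0$, provided $\delta$ is chosen close enough to $1$ and $\epsilon$ small enough in terms of $\Lambda$.

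The heart of the proof is a compactness/contradiction argument. Assume the claim fails: then there is a sequence of admissible initial data $(M_i,g_i)$ with $\epsilon_i \to 0$ and first failure times $t_i \in (0,T_i^{\ast}]$ of one of the three estimates satisfying $t_i \to 0$. Consider the case where the curvature bound fails first; choose $p_i \in M_i$ where the $L^{\infty}$-norm of $Rm_{g_i(t_i)}$ is attained, and parabolically rescale by $\lambda_i := \|Rm_{g_i(t_i)}\|_{L^{\infty}(M_i,g_i(t_i))}^{1/2}$, setting
\[
\tilde g_i(s) := \lambda_i^{2}\, g_i\bigl(t_i + \lambda_i^{-2}\, s\bigr).
\]
Because $t_i$ is the first failure time, the estimate \eqref{eq:Rminj} still holds for $t<t_i$, and a direct computation shows that, for $s$ in a uniform backwards interval $[-s_0,0]$ (depending only on $K$), the rescaled flow enjoys a uniform $L^{\infty}$-bound on curvature. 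Since $\mathcal{F}$ and $\|\mathring{Rc}\|_{L^2}$ are scale-invariant in dimension four, one has $\mathcal{F}(\tilde g_i(s)) \leq \Lambda^{2}$ and $\|\mathring{Rc}_{\tilde g_i(0)}\|_{L^2}\leq\epsilon_i\to 0$; the volume condition with $\delta$ close to $1$ also rescales to an almost-Euclidean volume ratio at every admissible scale. Streets-type compactness for pointed $L^2$-flows (in the spirit of \cite[Section 7]{streets2016concentration}) then extracts a smooth pointed Cheeger-Gromov subsequential limit $(M_{\infty}, g_{\infty}(s), p_{\infty})$ on $[-s_0,0]$, normalized so that $|Rm_{g_{\infty}(0)}|(p_{\infty}) = 1$.

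In the limit one has $\|\mathring{Rc}_{g_{\infty}(0)}\|_{L^2}=0$ together with almost-Euclidean volume ratios on all scales, so $(M_{\infty}, g_{\infty}(0))$ is a complete Einstein $4$-manifold with finite $L^{2}$-curvature and Euclidean volume growth. Rigidity of the Bishop-Gromov volume comparison under the Einstein condition then forces $(M_{\infty}, g_{\infty}(0))$ to be flat $\mathbb{R}^{4}$, contradicting $|Rm_{g_{\infty}(0)}|(p_{\infty})=1$. The failures of the injectivity-radius and diameter estimates are handled analogously: failure of $inj_{g(t)}\geq\iota t^{1/4}$ would, after rescaling, produce a limit that collapses in spite of Euclidean volume ratios, which contradicts Cheeger's lemma applied to the limit; diameter growth along the flow is controlled by integrating the metric evolution in \eqref{eq:2.5} using the already-established curvature bound in \eqref{eq:Rminj}. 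The principal obstacle is the compactness step: only the scale-invariant $L^{2}$-bound on $Rm$ is available a priori, so one must rely on Streets' $\epsilon$-regularity together with the almost-Euclidean volume normalization to upgrade the pointed convergence to smooth Cheeger-Gromov convergence near the blow-up point and to propagate the $\|\mathring{Rc}\|_{L^{2}}\to 0$ condition into the limit.
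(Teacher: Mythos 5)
Your overall architecture (contradiction, parabolic rescaling at the failure time, extraction of an Einstein limit, rigidity/gap argument) is parallel to the paper's, but there are two genuine gaps. The decisive one is that you never use the four-dimensional Gauss--Bonnet identity $\operatorname{grad}\mathcal{F}=4\operatorname{grad}\mathcal{G}$, $\mathcal{G}=\Vert\mathring{Rc}\Vert_{L^2}^2$, which is the engine of the paper's proof: it turns the hypothesis $\Vert\mathring{Rc}_g\Vert_{L^2}\leq\epsilon$ into smallness of the total dissipation $\int_0^T\int_M|\operatorname{grad}\mathcal{F}|^2$, and this is what allows one to propagate the geometric hypotheses from time $0$ to the blow-up time. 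You assert that ``the volume condition with $\delta$ close to $1$ rescales to an almost-Euclidean volume ratio at every admissible scale,'' but the non-collapsing hypothesis is only given for the initial metric, while your blow-up is performed at $t_i>0$; transferring it is exactly the hard step. In the paper this is done by proving the ball inclusion \eqref{ballincl} (distances do not expand too much along the flow, via the tubular averaging technique, with the error controlled by $\mathcal{G}^{1/2}(g(0))$) combined with the volume continuity estimate \eqref{Volopenset}, whose right-hand side is again controlled by the small dissipation; only then does Anderson's gap lemma apply at the blow-up point. Without these two ingredients your claims of ``Euclidean volume growth'' for the limit, and hence the rigidity step, are unsupported. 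Relatedly, in the curvature-saturation case the paper does not need almost-Euclidean volume at all: it uses $inj\geq 1$, the scale/flow invariance of the volume to force infinite volume, finite energy to force Ricci-flatness, and then a separate lemma (Ricci-flat $+$ $inj\geq 1$ $+$ bounded curvature $\Rightarrow$ universal curvature bound) to fix $K$; your appeal to ``Bishop--Gromov rigidity under the Einstein condition'' also tacitly assumes a sign on the Einstein constant that you have not established.

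The second gap is the diameter bound \eqref{diamestimate}: you propose to control it ``by integrating the metric evolution in \eqref{eq:2.5} using the already-established curvature bound,'' but the curvature bound only gives $|g'(t)|\lesssim t^{-1}$, which is not integrable at $t=0$ -- this is precisely the obstruction emphasized in Section \ref{sec:2}, and it is why the paper again resorts to the tubular averaging estimate together with the smallness of $\mathcal{G}(g(0))$ to show that nearly minimizing curves cannot reach length $2(1+D)$. Finally, a smaller technical point: for this fourth-order flow the correct parabolic rescaling dilates time by the square of the metric scale factor (as in the paper's $\overline{g}_i(t)=T_i^{-1/2}g_i(T_it)$), so with $\tilde g_i=\lambda_i^2 g_i(\cdot)$ the time variable should be rescaled by $\lambda_i^{-4}$, not $\lambda_i^{-2}$.
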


We point out that J. Streets has proved this result as a part of the proof of \cite[Theorem 1.21]{streets2016concentration} (cf. \cite[pp. 285-287]{streets2016concentration}). 
For sake of completeness, we also want to give a proof here, under the viewpoint
 of the dependence of $\epsilon$ and $T$ on given parameters and that \eqref{diamestimate} is also satisfied.

\begin{proof}
We follow the lines of \cite[pp. 285-286]{streets2016concentration}, giving further details.
At first, we allow $\delta\in(0,1)$ and $K>0$ to be arbitrary but fixed. Along the proof, we concretize these constants. We argue by contradiction.

Suppose, there is a sequence of 
closed Riemannian $4$-manifolds $(M_i,g_i)_{i\in\mathbb{N}}$ so that for all $i\in\mathbb{N}$ we have the following estimates:
\begin{align*}
\int_{M_i}{|Rm_{g_i}|_{g_i}^2\, dV_{g_i}}&\leq \Lambda\\
Vol_{g_i}(B_{g_i}(x,r))&\geq \delta \omega_4 r^4\hspace{1cm}\forall r\in [0,1]\\
\end{align*}
and 
\begin{equation*}
\int_{M_i}{|\mathring{Rc}_{g_i}|_{g_i}^2\, dV_{g_i}}\leq \frac{1}{i}
\end{equation*}
but the estimates \eqref{eq:Rminj} hold on a maximal interval $[0,T_i]$ where $\lim_{i\to\infty}{T_i}=0$. We consider the following sequence of rescaled metrics:
\begin{equation*}
\overline{g}_i(t):=T_i^{-\frac{1}{2}}g_i(T_i t)
\end{equation*}
Then, for each $i\in\mathbb{N}$ the solution of the $L^2$-flow exists on $[0,1]$ and satisfies:
\begin{align}\label{eq:Rminjscaled}
\begin{split}
\Vert Rm_{\overline{g}_i(t)}\Vert_{L^{\infty}(M,\overline{g_i}(t))}&=T_i^{\frac{1}{2}}\Vert Rm_{g_i(T_i t)}\Vert_{L^{\infty}(M,g_i(T_i t))}\leq T_i^{\frac{1}{2}} K (T_it)^{-\frac{1}{2}}=K t^{-\frac{1}{2}}\\
inj_{\overline{g_i}(t)}(M_i)&=T_i^{-\frac{1}{4}}inj_{g_i(T_i t)}\geq T_i^{-\frac{1}{4}}(T_i t)^{\frac{1}{4}}=t^{\frac{1}{4}}
\end{split}
\end{align}
on $[0,1]$, which means that the estimates \eqref{eq:Rminj} are formally preserved under this kind of rescaling.

By assumption, for each $i\in\mathbb{N}$, one of the inequalities in \eqref{eq:Rminjscaled} is an equality at time $t=1$.
In respect of the generalized Gauss-Bonnet Theorem (cf. \cite[Appendix A]{1504.02623}), i.e.:
\begin{align}\label{gaussbonnet}
\begin{split}
\int_M{|Rm|^2\, dV_{g}}&=c_0\pi^2 \chi(M)+4\int_M{|Rc|^2\, dV_{g}}-\int_M{R^2\, dV_{g}}\\
&=c_0\pi^2 \chi(M)+4\int_M{|\mathring{Rc}|^2\, dV_{g}}
\end{split}
\end{align}
where we have used
\begin{align*}
|\mathring{Rc}|^2=&\left|Rc-\frac{1}{4}R g\right|^2=\left|Rc\right|^2-\frac{1}{2}\langle Rc ,R g\rangle+\frac{1}{16}R^2 |g|^2\\
=&\left|Rc\right|^2-\frac{1}{2} R\, tr (Rc)+\frac{1}{4} R^2  =\left|Rc\right|^2-\frac{1}{2}R^2+\frac{1}{4} R^2\\
=&\left|Rc\right|^2-\frac{1}{4}R^2
\end{align*}
we introduce the following functional
\begin{equation*}
\mathcal{G}_g:=\int_M{|\mathring{Rc}_g|_g^2\, dV_g}
\end{equation*}
From \eqref{gaussbonnet} and \cite[4.10 Definition, p. 119]{besse2007einstein} we infer
\begin{equation*}
\text{grad }\mathcal{F}\equiv 4 \text{ grad }\mathcal{G}
\end{equation*}
As in the proof of Lemma \ref{energydifferencelem} we obtain for each  $i\in\mathbb{N}$ and $t\in [0,T_i]$
\begin{align*}
\mathcal{G}_{g_i(0)}-\mathcal{G}_{g_i(t)}=\int_{0}^t{\int_{M_i}{|\text{grad }\mathcal{G}_{g_i(s)}|_{g_i(s)}^2\, dV_{g_i(s)}}\, ds }\geq 0
\end{align*}
which implies $\mathcal{G}_{g_i(t)}\leq \frac{1}{i}$ for each $i\in\mathbb{N}$ and $t\in [0,T_i]$. Due to the scale invariance of the functional $\mathcal{G}$, we have in particular
\begin{equation*}
\mathcal{G}_{\overline{g}_i(1)}\leq \frac{1}{i}\hspace{0.5cm}\text{for all }i\in\mathbb{N}
\end{equation*}
As already stated, \eqref{eq:Rminjscaled} implies 
\begin{equation*}
\Vert Rm_{\overline{g}_i(1)}\Vert_{L^{\infty}(M,\overline{g}_i(1))}=K \hspace{0.5cm}\text{or}\hspace{0.5cm}inj_{\overline{g}_i}(M_i)=1
\end{equation*}
for each $i\in\mathbb{N}$. 

At first, we assume that there is a subsequence  $(M_{i},\overline{g}_{i})_{i\in\mathbb{N}}$ (we do not change the index) 
satisfying
\begin{align*}
\begin{cases}
&\Vert Rm_{\overline{g}_{i}(1)}\Vert_{L^{\infty}(M_{i},\overline{g}_{i}(1))}=K\\
& inj_{\overline{g}_{i}(1)}(M_{i})\geq 1
\end{cases}
\end{align*}
for each $i\in\mathbb{N}$. Using the compactness, for each $j\in\mathbb{N}$ we may choose a point $p_{i}\in M_{i}$ satisfying $|Rm_{\overline{g}_{i}(1)}(p_{i})|_{\overline{g}_{i}(1)}=K$. From \cite[Corollary 1.5, p. 42]{streets2013long} 
we conclude that there exists a subsequence of manifolds, also index by $i$, and a complete pointed $4$-manifold $(M_{\infty},p_{\infty})$ together with
a $1$-parametrized family of Riemannian metrics $(g_{\infty}(t))_{t\in [1/2,1]}$ on $M_{\infty}$ such that for each $t\in [1/2,1]$ 
\begin{equation*}
(M_{i},\overline{g}_{i}(t),p_{i})\stackrel{i\to\infty}{\longrightarrow} (M_{\infty},g_{\infty}(t),p_{\infty})
\end{equation*}
in the sense of $C^{\infty}$-local submersions (cf. \cite[Definition 2.4, p. 45]{streets2013long}), and 
\begin{align*}
\Vert Rm_{g_{\infty}(1)}\Vert_{L^{\infty}(M_{\infty},g_{\infty}(1))}&=|Rm_{g_{\infty}(1)}(p_{\infty})|_{g_{\infty}(1)}=K
\intertext{as well as, using \cite[Theorem]{sakai1983continuity}}
inj_{g_{\infty}(1)}(M_{\infty})&\geq 1
\end{align*}
Since $\lim_{i\to\infty}{\mathcal{G}_{\overline{g}_i(1)}}=0$ we conclude that $(M_{\infty},g_{\infty}(1),p_{\infty})$ needs to be an Einstein manifold satisfying
\begin{equation}\label{energyboundatinfty}
\int_{M_{\infty}}{|Rm_{g_{\infty}(1)}|_{g_{\infty}(1)}^2\, dV_{{g_{\infty}(1)}}}\leq \Lambda
\end{equation}
 In particular, \cite[Proposition 7.8, p. 125]{lee1997riemannian} implies that the scalar curvature is constant. On the other hand,
 from the non-collapsing condition and \eqref{VolInvariant} we obtain that ${Vol_{\overline{g}_i(1)}(M_i)}$ tends to infinity as $i\in\mathbb{N}$ tends to infinity. Then, estimate \eqref{energyboundatinfty} implies
that the scalar curvature needs to vanish on $(M_{\infty},\overline{g}_{\infty}(1))$, hence $(M_{\infty},\overline{g}_{\infty}(1))$ is a Ricci-flat manifold. From Lemma \ref{ricflatimplboundcurv} we obtain
\begin{equation*}
\Vert Rm_{\overline{g}_{\infty}(1)}\Vert_{L^{\infty}(M_{\infty},\overline{g}_{\infty}(1))}\leq C
\end{equation*}
where $C$ is a universal constant, since the space dimension is fixed and the injectivity radius is bounded from below by 1.
Choosing $K=C+1$ we obtain a contradiction to $|Rm_{\overline{g}_{\infty}(1)}(p_{\infty})|_{\overline{g}_{\infty}(1)}=K$. 
This finishes the part of the proof that $\Vert Rm_{g_i(T_i)}\Vert_{L^{\infty}(M,g_i(T_i))}=K T_i^{-\frac{1}{2}}$ can only be valid for a finite number of $i\in\mathbb{N}$.

Now we assume that, after taking a subsequence, we are in the following situation
\begin{align*}
\begin{cases}
&\Vert Rm_{\overline{g}_{i}(1)}\Vert_{L^{\infty}(M_{i},\overline{g}_{i}(1))}\leq K\\
& inj_{\overline{g}_{i}(1)}(M_{i}) = 1
\end{cases}
\end{align*}
Then, the non-collapsing assumption of the initial sequence implies the following non-collapsing condition concerning the rescaled metrics
\begin{equation*}
Vol_{\overline{g}_{i}(0)}(B_{\overline{g}_{i}(0)}(x,r))\geq \delta \omega_4 r^4\hspace{1cm}\forall x\in M_{i}, r\in [0,T_{i}^{-\frac{1}{4}}]
\end{equation*}
Hence, for each $\sigma\geq 1$ there exists $i_0(\sigma)\in\mathbb{N}$ so that
\begin{equation}\label{noncolscale}
Vol_{\overline{g}_{i}(0)}(B_{\overline{g}_{i}(0)}(x,r))\geq \delta \omega_4 r^4\hspace{1cm}\forall x\in M_{i}, r\in (0,\sigma]
\end{equation}
for all $i\geq i_0(\sigma)$. Now let $\lambda \in (0,1)$ be fixed. This constant will be made explicit below. Using \eqref{Volopenset} we obtain for $i\geq i_0(\sigma,\lambda, \delta)$
\begin{align}\label{noncolscaleII}
\begin{split}
&\left[Vol_{\overline{g}_{i}(1)}(B_{\overline{g}_{i}(0)}(x,\lambda \sigma))\right]^{\frac{1}{2}}\geq \left[Vol_{\overline{g}_{i}(0)}(B_{\overline{g}_{i}(0)}(x,\lambda \sigma))\right]^{\frac{1}{2}}- C\left(\frac{1}{i}\right)^{\frac{1}{2}}\\
\stackrel{\eqref{noncolscale}}{\geq}&\left[\delta\omega_4 (\lambda \sigma)^4\right]^{\frac{1}{2}}-C\left(\frac{1}{i}\right)^{\frac{1}{2}}=\left[(1-(1-\delta))\omega_4 \lambda^4 \sigma^4\right]^{\frac{1}{2}}- C\left(\frac{1}{i}\right)^{\frac{1}{2}}\\
\stackrel{\hphantom{\eqref{noncolscale}}}{\geq} & \left[(1-2(1-\delta))\omega_4 \lambda^4 \sigma^4\right]^{\frac{1}{2}}
\end{split}
\end{align} 
where the last estimate does not use that $i_0$ depends on $\sigma$, because, in order to choose $i_0\in\mathbb{N}$ large enough
one may fix $\sigma=1$ at first. Afterwards, one may multiply the inequality by $\sigma^2$. Since $\sigma\geq 1$, the desired estimate follows.

It is our intention to prove that
\begin{equation}\label{ballincl}
B_{\overline{g}_{i}(0)}(x,\lambda \sigma)\subseteq B_{\overline{g}_{i}(1)}(x,\sigma)\hspace{1cm}\forall i\geq i_0(\sigma,\lambda, \delta),\, \forall x\in M_{i}
\end{equation}
Before proving this, we demonstrate that this fact implies a contradiction.

For each $i\in\mathbb{N}$ we choose a point $p_{i}\in M_{i}$ satisfying 
\begin{equation*}
inj_{\overline{g}_{i}(1)}(M_{i},p_{i})=inj_{\overline{g}_{i}(1)}(M_{i})=1
\end{equation*}
 As above, using \cite[Corollary 1.5, p. 42]{streets2013long}, we may assume that there exists a subsequence of manifolds, again indexed by $i$, 
and a complete pointed $4$-manifold $(M_{\infty},p_{\infty})$ as well as a $1$-parametrized family of 
Riemannian metrics $(g_{\infty}(t))_{t\in [1/2,1]}$ on $M_{\infty}$ so that for each $t\in [1/2,1]$
\begin{equation*}
(M_{i},\overline{g}_{i}(t),p_{i})\stackrel{i\to\infty}{\longrightarrow} (M_{\infty},g_{\infty}(t),p_{\infty})
\end{equation*}
in the sense of $C^{\infty}$-local submersions. Using \cite[Theorem]{sakai1983continuity} we infer
\begin{equation}
\label{inj1}
 inj_{g_{\infty}(1)}(M_{\infty},p_{\infty})= 1
\end{equation}
Let $\zeta>0$ be equal to the non-collapsing parameter
in \cite[Gap Lemma 3.1, p. 440]{anderson1990convergence} which is denoted by $"\epsilon"$ in that work and only depends on the space dimension $n=4$.
We assume $\delta\in (0,1)$ and $\lambda\in (0,1)$ to be close enough to $1$ so that 
\begin{equation}\label{lamdelcl1}
(1-2(1-\delta))\lambda^4 \geq 1-\zeta
\end{equation}
 Assumed \eqref{ballincl} is valid, then for each 
for $i\geq i_0(\sigma,\lambda, \delta)$ we obtain the following estimate
\begin{align*}
Vol_{\overline{g}_{i}(1)}(B_{\overline{g}_{i}(1)}(p_{i},\sigma)) \stackrel{\eqref{ballincl}}{\geq} Vol_{\overline{g}_{i}(1)}(B_{\overline{g}_{i}(0)}(p_{i},\lambda \sigma))\stackrel{\eqref{noncolscaleII} / \eqref{lamdelcl1}}{\geq}(1-\zeta)\omega_4 \sigma^4
\end{align*}
and finally, as $i\in\mathbb{N}$ tends to infinity
\begin{align*}
Vol_{g_{\infty}(1)}(B_{g_{\infty}(1)}(p_{\infty},\sigma)) \geq (1-\zeta)\omega_4 \sigma^4\hspace{0.5cm}\forall \sigma\geq 1
\end{align*}
Then \cite[Gap Lemma 3.1, p. 440]{anderson1990convergence} implies that $(M_{\infty},g_{\infty}(1))$ is isometric 
to $(\mathbb{R}^4,g_{can})$ which contradicts \eqref{inj1}.

Hence, in order to prove the existence result and the validity of \eqref{eq:Rminj}, it remains to prove \eqref{ballincl}. From here on we do not write the subindex $i\in\mathbb{N}$.
 The following considerations shall be understood with $i\in\mathbb{N}$ fixed. That means that $p$ is one of the points $p_{i}$ 
and $\overline{g}(t)$ is the metric $\overline{g}_{i}(t)$ on $M=M_{i}$ with 
the same index. Let
\begin{equation}\label{ycontainedinball}
y\in B_{\overline{g}(0)}(p,\lambda \sigma)
\end{equation}
be an arbitrary point. As in the proof of  Lemma \ref{forwest} we construct a suitable forward-geodesic: Let
\begin{align*}
\beta:=\min_{t\in [0,1]}{\beta_t}>0
\end{align*}
where
\begin{equation*}
\beta_t:=\beta(4,diam_{\overline{g}(t)}(M),f_3(M,\overline{g}(t)),inj_{\overline{g}(t)}(M))
\end{equation*}
is chosen according to Lemma \ref{tube}. Next, using Lemma \ref{exforw}, we construct a $\beta$-forward-geodesic connecting $p$ and $y$ which is 
denoted by $(\xi_t)_{t\in [0,1]}$.  Hence, we have a finite set of geodesics $\left( \xi_{j S}\right)_{j\in \{0,..., \lfloor\frac{1}{S}\rfloor\}}$ which are parametrized proportional to arc length, i.e.:
\begin{equation*}
|\dot{\xi}_{jS}|_{g(jS)}\equiv d(p,y,jS)\text{ for all }j\in \{0,..., \lfloor\frac{1}{S}\rfloor\}
\end{equation*}
Furthermore, for each $j\in \{0,..., \lfloor\frac{1}{S}\rfloor\}$ let
\begin{align*}
 \varphi_{j }: [0, d(p,y,jS)]\longrightarrow [0,1]\\
\varphi(s)=\frac{s}{d(p,y,jS)}
\end{align*}
and let
\begin{align*}
 \gamma_{t}:=\xi_{j S}\circ\varphi_{j S}\text{ for each }  t\in[jS, (j+1)S)\cap [0,1]
\end{align*}
Applying the same argumentation as in the proof of Lemma \ref{forwest} we ensure that for each  $j\in \{0,..., \lfloor \frac{1}{S}\rfloor\}$ 
 and $t\in[jS, (j+1)S)\cap [0,1]$ the tubular neighborhood $D(\gamma_{t},\rho_t)$ is foliated by $\left(D(\gamma_{t}(s),\rho_t)\right)_{s\in [0,d(p,y,jS)]}$ 
where
\begin{equation*}
\rho_t:=\mu \min\left\{inj_{\overline{g}(t)}(M),f_3(M_,\overline{g}(t))^{-\frac{1}{2}} \right\} 
\end{equation*}
and the differential of the projection map satisfies \eqref{dpi}. Here $\mu>0$ is chosen fixed but also compatible to \cite[Lemma 2.7, p. 268]{streets2016concentration}.
We want to give a controlled lower bound on $\rho_t$. The curvature decay estimate from \eqref{eq:Rminjscaled} together with \eqref{boundcurv} implies for each $m\in\{1,2,3\}$:
\begin{equation}
\left\Vert \nabla^m Rm_{g(t)} \right\Vert_{L^{\infty}(M,g(t))}\leq C(m) t^{-\frac{2+m}{4}}\text{ for all }t\in (0,1]
\end{equation}
From this, we infer
\begin{equation*}
 f_3(M,g(t))\leq C t^{-\frac{1}{2}}\text{ on }(0,1]
\end{equation*}
Combining this estimate with the injectivity radius estimate from \eqref{eq:Rminjscaled}, we obtain, as in the proof of Lemma \ref{forwest} 
\begin{equation*}
\rho_t\geq \mu \left\{ t^{\frac{1}{4}}, C^{-\frac{1}{2}} t^{\frac{1}{4}} \right\}\geq   \mu \min\{1, C^{-\frac{1}{2}}\} t^{\frac{7}{24}}=: R t^{\frac{7}{24}}=:r_t  
\end{equation*}
we also obtain the estimate
\begin{align}\label{changeoflength}
\begin{split}
 \frac{d}{dt}{L(\gamma_t,t)} \leq & C_2 R^{-\frac{3}{2}} t^{-\frac{7}{16}}\left(\int_{M}{|\text{grad }\mathcal{F}_{\overline{g}(t)}|^2\, dV_{\overline{g}(t)}}\right)^{\frac{1}{2}}L^{\frac{1}{2}}(\gamma_{jS},t)\\
&+C_2 R t^{-\frac{23}{24}} L(\gamma_{jS},t)
\end{split}
\end{align}
on $(jS,(j+1)S)\cap [0,1)$ where $j\in\{1,...,\lfloor \frac{1}{S}\rfloor\}$. Now we assume that
\begin{align*}
&j_0:=\\
&\min\left\{j\in\{1,...,\lfloor \frac{1}{S}\rfloor\} \, |\, \exists t\in [j S,(j+1)S)\cap (0,1]\text{ s. th. } L(\gamma_{t},t)= \sigma \right\}
\end{align*}
exists, and  let
\begin{align*}
t_0:= \sup\left\{t\in [j_0 S,(j_0+1)S)\cap (0,1]\, |\,  L(\gamma_{\tau},\tau)\leq  \sigma\ \forall \tau\in [j_0 S,t]   \right\}
\end{align*}
Then, for each $j\in\{0,...,j_0\}$ and $t\in (jS,(j+1)S)\cap (0,t_0)$ estimate \eqref{changeoflength} implies   
\begin{align*}
 \frac{d}{dt}{L(\gamma_t,t)} \leq & \sigma  \left[C_2 R^{-\frac{3}{2}} t^{-\frac{7}{16}}\left(\int_{M}{|\text{grad }\mathcal{F}_{\overline{g}(t)}|^2\, dV_{\overline{g}(t)}}\right)^{\frac{1}{2}}
+ C_2 R t^{-\frac{23}{24}}\right] \\
\end{align*}
and consequently 
\begin{align*}
&d(p,y,t)-d(p,y,jS)\\
\leq &L(\gamma_{t},t)-L(\gamma_{jS},jS)\\
\leq &\sigma  C_2 R^{-\frac{3}{2}}\int_{jS}^t{ s^{-\frac{7}{16}}\left(\int_{M}{|\text{grad }\mathcal{F}_{\overline{g}(s)}|^2\, dV_{\overline{g}(s)}}\right)^{\frac{1}{2}}\, ds}+\sigma  C_2 R \int_{jS}^t{s^{-\frac{23}{24}}\,ds }
\end{align*}
In particular, for each $j\in\{0,...,j_0-1\}$ we infer
\begin{align*}
&d(p,y,(j+1)S)-d(p,y,jS)\\
\leq &\sigma  C_2  R^{-\frac{3}{2}}\int_{jS}^{(j+1)S}{ s^{-\frac{7}{16}}\left(\int_{M}{|\text{grad }\mathcal{F}_{\overline{g}(s)}|^2\, dV_{\overline{g}(s)}}\right)^{\frac{1}{2}}\, ds}+\sigma  C_2 R \int_{jS}^{(j+1)S}{s^{-\frac{23}{24}}\,ds }
\intertext{and}
&L(\gamma_{t_0},t_0)-d(p,y,j_0S)\\
\leq &L(\gamma_{t_0},t_0)-L(\gamma_{j_0S},j_0S)\\
\leq &\sigma C_2 R^{-\frac{3}{2}}\int_{j_0S}^{t_0}{ s^{-\frac{7}{16}}\left(\int_{M}{|\text{grad }\mathcal{F}_{\overline{g}(s)}|^2\, dV_{\overline{g}(s)}}\right)^{\frac{1}{2}}\, ds}+\sigma  C_2 R \int_{j_0 S}^{t_0}{s^{-\frac{23}{24}}\,ds }
\end{align*}
and finally
\begin{align*}
& L(\gamma_{t_0},t_0)-d(p,y,0)\\
\leq & L(\gamma_{t_0},t_0)-d(p,y,j_0S)+ \sum_{j=0}^{j_0-1}{[d(p,y,(j+1)S)-d(p,y,jS)]}\\
\leq &\sigma \left[C_2 R^{-\frac{3}{2}}\int_{0}^{t_0}{ s^{-\frac{7}{16}}\left(\int_{M}{|\text{grad }\mathcal{F}_{\overline{g}(s)}|^2\, dV_{\overline{g}(s)}}\right)^{\frac{1}{2}}\, ds}+ C_2 R \int_{0}^{t_0}{s^{-\frac{23}{24}}\,ds }\right]\\
\leq &\sigma \left[C_2 R^{-\frac{3}{2}}\int_{0}^{1}{ s^{-\frac{7}{16}}\left(\int_{M}{|\text{grad }\mathcal{F}_{\overline{g}(s)}|^2\, dV_{\overline{g}(s)}}\right)^{\frac{1}{2}}\, ds}+ C_2 R \int_{0}^{1}{s^{-\frac{23}{24}}\,ds }\right]\\
\leq &\sigma C_2 R^{-\frac{3}{2}}\left(\int_{0}^{1}{s^{-\frac{7}{8}}\, ds} \right)^{\frac{1}{2}}\left(\int_{0}^{1}{ \int_{M}{|\text{grad }\mathcal{F}_{\overline{g}(s)}|^2\, dV_{\overline{g}(s)}}\, ds}\right)^{\frac{1}{2}}\\
&\hspace{2cm}+\sigma C_2 R \int_{0}^{1}{s^{-\frac{23}{24}}\, ds} \\
\leq &\sigma C_3 R^{-\frac{3}{2}}\left(\int_{0}^{1}{s^{-\frac{7}{8}}\, ds} \right)^{\frac{1}{2}}\left(\int_{0}^{1}{ \int_{M}{|\text{grad }\mathcal{G}_{\overline{g}(s)}|^2\, dV_{\overline{g}(s)}}\, ds}\right)^{\frac{1}{2}}\\
&\hspace{2cm}+\sigma C_2 R \int_{0}^{1}{s^{-\frac{23}{24}}\, ds}\\ 
\leq &\sigma \left[C_4 R^{-\frac{3}{2}}\left(\int_{0}^{1}{ \int_{M}{|\text{grad }\mathcal{G}_{\overline{g}(s)}|^2\, dV_{\overline{g}(s)}}\, ds}\right)^{\frac{1}{2}}+ C_4 R \right] \\
\leq &\sigma C_4 R^{-\frac{3}{2}}\mathcal{G}^{\frac{1}{2}}_{\overline{g}(0)}+\sigma C_4 R \\
=&\sigma \left[ C_4 R^{-\frac{3}{2}}\mathcal{G}^{\frac{1}{2}}_{g(0)}+ C_4 R \right]
\end{align*}
Together with \eqref{ycontainedinball} we obtain
\begin{equation*}
L(\gamma_{t_0},t_0)< \sigma\left[ \lambda+C_4 R^{-\frac{3}{2}}\mathcal{G}^{\frac{1}{2}}_{g(0)}+ C_4 R \right]
\end{equation*}
Throughout, we may assume that $R>0$ is small enough compared to $C_4>0$ and $\lambda>0$ in order to ensure that 
\begin{equation*}
C_4 R\leq \frac{1-\lambda}{2}
\end{equation*} and we may assume that $i\in\mathbb{N}$ is chosen large enough, so that $\mathcal{G}_{g(0)}=\mathcal{G}_{g_{i}}\leq\frac{1}{i}$ is small enough compared to $\lambda>0$, $R(\lambda)>0$ and $C_4>0$ so that 
\begin{equation*}
C_4 R^{-\frac{3}{2}}\mathcal{G}^{\frac{1}{2}}_{g(0)}\leq \frac{1-\lambda}{2}
\end{equation*}
Hence, we have $L(\gamma_{t_0},t_0)<\sigma$, which contradicts $L(\gamma_{t_0},t_0)=\sigma$. This implies that $L(\gamma_t,t)<\sigma$ is valid for each $t\in [0,1]$ and consequently $d(p,y,1)<\sigma$. This finishes the proof of \eqref{ballincl}. 

We have proved the existence time estimate as well as the curvature decay estimate and the injectivity radius growth estimate.

It remains to show the diameter estimate \eqref{diamestimate}.
The argumentation is based on \cite[p. 281]{streets2016concentration} but we are in a different situation.
Let $x,y\in M$ so that $d(x,y,1)=diam_{g(1)}(M)$. As above, there exists $\beta>0$, $S>0$ and a family of curves $(\gamma_t)_{t\in [0,T]}$ so that 
\begin{itemize}
\item for each $j\in \left\{0,...,\left\lfloor\frac{T}{S}\right\rfloor \right\}$
\begin{align*}
\gamma_{jS}: [0,d(x,y,jS)]\longrightarrow M
\end{align*}
is a unit-speed length minimizing geodesic
\item for each $j\in \left\{0,...,\left\lfloor\frac{T}{S}\right\rfloor  \right\}$ and $t\in [jS, (j+1)S)\cap [0,T]$ 
the curve
\begin{equation*}
\gamma_{t}: [0,d(x,y,jS)]\longrightarrow M
\end{equation*}
satisfies
\begin{align*}
L(\gamma_t,t)\leq d(x,y,t)+\beta 
\end{align*}
\item  for each $j\in \left\{0,...,\left\lfloor\frac{T}{S}\right\rfloor  \right\}$ and $t\in [jS, (j+1)S)\cap [0,T]$ the tubular  neighborhood $D(\gamma_{t},r_t)$ is foliated by 
$\left( D(\gamma_{t}(s),r_t\right)_{s\in [0,d(x,y,jS)]}$
where
\begin{equation*}
r_t:=R t^{\frac{7}{24}}:=  \mu \min\{1, C^{-\frac{1}{2}} \} t^{\frac{7}{24}}
\end{equation*}
Furthermore, the projection map $\pi$ satisfies \eqref{dpi}, i.e.
\begin{equation*}
|d \pi |\leq 2\text{ for all } x\in D(\gamma,r_t)
\end{equation*}
\end{itemize}

Using these conditions we obtain \eqref{changeoflength}, i.e.:
\begin{align*}
 \frac{d}{dt}{L(\gamma_t,t)} \leq & C_2 R^{-\frac{3}{2}} t^{-\frac{7}{16}}\left(\int_{M}{|\text{grad }\mathcal{F}_{g(t)}|^2\, dV_{\overline{g}(t)}}\right)^{\frac{1}{2}}L^{\frac{1}{2}}(\gamma_{jS},t)\\
&+C_2 R t^{-\frac{23}{24}} L(\gamma_{jS},t)
\end{align*}
on $(jS,(j+1)S)\cap [0,T)$ where $j\in\{1,...,\lfloor \frac{T}{S}\rfloor\}$. In this situation we assume that
\begin{align*}
&j_0:=\\
&\min\left\{j\in\{1,...,\lfloor \frac{1}{S}\rfloor\} \, |\, \exists t\in [j S,(j+1)S)\cap (0,T]\text{ s. th. } L(\gamma_{t},t)= 2(1+D) \right\}
\end{align*}
exists, and we define
\begin{align*}
t_0:= \sup\left\{t\in [j_0 S,(j_0+1)S)\cap (0,T]\, |\,  L(\gamma_{\tau},\tau)\leq  2(1+D) \ \forall \tau\in [j_0 S,t]  \right\}
\end{align*}
Thus, for each $j\in\{0,...,j_0\}$ we obtain
\begin{align*}
 \frac{d}{dt}{L(\gamma_t,t)} \leq &  C_3 R^{-\frac{3}{2}} t^{-\frac{7}{16}}\left(\int_{M}{|\text{grad }\mathcal{F}_{g(t)}|^2\, dV_{g(t)}}\right)^{\frac{1}{2}}(1+D)^{\frac{1}{2}} \\
& + C_3 R t^{-\frac{23}{24}}(1+D) 
\end{align*}
 on $(jS,(j+1)S)\cap (0,t_0)$. From this, we infer
\begin{align*}
& L(\gamma_{t_0},t_0)-d(x,y,0)\\
\leq &L(\gamma_{t_0},t_0)-d(x,y,j_0S)+ \sum_{j=0}^{j_0-1}{[d(x,y,(j+1)S)-d(x,y,jS)]}\\
\leq &(1+D)C_3 \left[R^{-\frac{3}{2}}\int_{0}^{t_0}{ s^{-\frac{7}{16}}\left(\int_{M}{|\text{grad }\mathcal{F}_{g(s)}|^2\, dV_{g(s)}}\right)^{\frac{1}{2}}\, ds}+R \int_{0}^{t_0}{s^{-\frac{23}{24}}\,ds } \right]\\
\leq &(1+D) C_3 \left[R^{-\frac{3}{2}}\int_{0}^{1}{ s^{-\frac{7}{16}}\left(\int_{M}{|\text{grad }\mathcal{F}_{g(s)}|^2\, dV_{g(s)}}\right)^{\frac{1}{2}}\, ds}+ R \int_{0}^{t_0}{s^{-\frac{23}{24}}\,ds } \right]\\
\leq &(1+D) C_3 R^{-\frac{3}{2}}\left(\int_{0}^{1}{s^{-\frac{7}{8}}\, ds} \right)^{\frac{1}{2}}\left(\int_{0}^{1}{ \int_{M}{|\text{grad }\mathcal{F}_{g(s)}|^2\, dV_{g(s)}}\, ds}\right)^{\frac{1}{2}}\\
&\hspace{2cm}+(1+D)  C_3 R \int_{0}^{t_0}{s^{-\frac{23}{24}}\, ds} \\
\leq &(1+D)C_4 R^{-\frac{3}{2}}\left(\int_{0}^{1}{s^{-\frac{7}{8}}\, ds} \right)^{\frac{1}{2}}\left(\int_{0}^{1}{ \int_{M}{|\text{grad }\mathcal{G}_{g(s)}|^2\, dV_{g(s)}}\, ds}\right)^{\frac{1}{2}}\\
&\hspace{2cm}+(1+D) C_4 R \int_{0}^{t_0}{s^{-\frac{23}{24}}\, ds}\\ 
\leq &(1+D)C_4 \left[ R^{-\frac{3}{2}}\left(\int_{0}^{1}{s^{-\frac{7}{8}}\, ds} \right)^{\frac{1}{2}}\mathcal{G}^{\frac{1}{2}}_{g(0)}+R \int_{0}^{1}{s^{-\frac{23}{24}}\, ds} \right]\\
\leq &(1+D)C_5 \left[ R^{-\frac{3}{2}}\mathcal{G}^{\frac{1}{2}}_{g(0)}+ R \right]< 1+D
\end{align*}
Here, we have assumed that $\mathcal{G}^{\frac{1}{2}}(g(0))$ and $R>0$ are sufficiently small with respect to universal constants.
Finally, we obtain
\begin{equation*}
L(\gamma_{t_0},t_0)<d(x,y,0)+1+D=D+1+D<2(1+D)
\end{equation*}
contradicting $L(\gamma_{t_0},t_0)=2(1+D)$. This shows, that we have $diam_{g(t)}(M)\leq 2(1+D)$ for all $t\in [0,T]$.
\end{proof}

This existence result allows us to prove the following diffeomorphism finiteness result:
\begin{corollary}\label{diffeofin}
Let $D,\Lambda>0$. There exists $\epsilon(\Lambda)>0$ and a universal constant $\delta\in(0,1)$ so that there are only finitely many diffeomorphism types of closed Riemannian $4$-manifolds $(M,g)$ satisfying
\begin{align*}
diam_g(M)&\leq D\\
\Vert Rm_g \Vert_{L^2(M,g)}&\leq \Lambda\\
Vol_g(B_g(x,r))&\geq \delta \omega_4 r^4\hspace{1cm}\forall x\in M,\, r\in [0,1]\\
\Vert \mathring{Rc}_g \Vert_{L^2(M,g)}&\leq \epsilon
\end{align*}
\end{corollary}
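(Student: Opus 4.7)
The plan is to reduce the finiteness statement at time $t=0$ to a classical finiteness statement at a small but uniform positive time along the $L^2$-flow, where the metrics satisfy two-sided sectional curvature bounds, a uniform diameter bound, and a uniform positive lower volume bound, so that Cheeger's finiteness theorem applies. Since the $L^2$-flow is a smooth evolution of a fixed underlying manifold $M$, the diffeomorphism type of $(M,g)$ coincides with the diffeomorphism type of $(M,g(T))$, so diffeomorphism finiteness at time $T$ transfers directly to the class defined at time $0$.

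First, I would invoke Theorem \ref{existenceII} to produce the universal $\delta\in(0,1)$ and a constant $K>0$, together with $\epsilon(\Lambda),T(\Lambda)>0$ so that for every $(M,g)$ in the class the $L^2$-flow exists on $[0,T]$ and $(M,g(T))$ satisfies
\begin{align*}
\Vert Rm_{g(T)}\Vert_{L^\infty(M,g(T))} &\leq K\,T^{-\frac{1}{2}}, \\
inj_{g(T)}(M) &\geq T^{\frac{1}{4}}, \\
diam_{g(T)}(M) &\leq 2(1+D).
\end{align*}
All three bounds depend only on $\Lambda$ and $D$, not on the individual manifold. Thus the class $\{(M,g(T))\}$ associated to our original class sits in a compact set with respect to the classical $C^{1,\alpha}$-topology of Riemannian manifolds with bounded curvature and injectivity radius.

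Next, I would check that a uniform positive lower volume bound holds at time $T$. The non-collapsing assumption at $t=0$ together with the $L^2$-volume estimate (essentially Lemma \ref{L2volestimate}, already used in the proof of Theorem \ref{th:1}) shows that $Vol_{g(T)}(B_{g(T)}(x,r_0))\geq v_0(\delta,\Lambda)>0$ for some fixed small radius $r_0$ depending only on $T$, and in particular $Vol_{g(T)}(M)\geq v_0>0$. Alternatively, the injectivity radius lower bound $T^{1/4}$ combined with the sectional curvature upper bound at time $T$ directly produces such a lower volume bound via Croke's inequality or the standard volume-injectivity-comparison on small balls.

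Finally, I would apply Cheeger's finiteness theorem (in the form giving only finitely many diffeomorphism types under two-sided sectional curvature bounds, upper diameter bound and lower volume bound) to the family $\{(M,g(T))\}$. This yields only finitely many diffeomorphism types among the manifolds $(M,g(T))$. Since $g(\,\cdot\,)$ is a smooth one-parameter family of metrics on the fixed smooth manifold $M$, the identity $id_M:(M,g)\to(M,g(T))$ is a diffeomorphism, hence the original class $(M,g)$ admits only finitely many diffeomorphism types as well. The main technical obstacle I foresee is verifying the lower volume bound at time $T$ with constants that depend only on $\Lambda,D$ and the universal $\delta$; this is exactly the place where one must use that the non-collapsing hypothesis is essentially preserved along the $L^2$-flow on a definite time interval, which is already implicit in the machinery developed in Theorem \ref{existenceII}.
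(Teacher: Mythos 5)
Your proposal is correct, and its core is the same as the paper's: smooth each metric with the $L^2$-flow via Theorem \ref{existenceII}, obtain uniform $L^\infty$-curvature, injectivity radius, diameter and volume bounds at a fixed positive time $T(\Lambda)$, and conclude finiteness of diffeomorphism types there, transferring back to $t=0$ because the flow deforms a fixed smooth manifold. The only real divergence is the endgame: the paper argues by contradiction, taking a hypothetical sequence of pairwise non-diffeomorphic manifolds in the class and applying Anderson's $C^{1,\alpha}$-compactness theorem \cite[Theorem 2.2]{anderson1989ricci} to the smoothed metrics at the fixed later time (a convergent subsequence forces eventually diffeomorphic manifolds), whereas you apply Cheeger's finiteness theorem directly, which requires you to supply the lower volume bound explicitly. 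That bound is indeed available exactly as you indicate: either from volume invariance under the flow (Lemma \ref{L2volestimate}, \eqref{VolInvariant}) together with the non-collapsing hypothesis at $t=0$, which gives $Vol_{g(T)}(M)=Vol_{g(0)}(M)\geq\delta\omega_4$, or from the injectivity radius bound $T^{1/4}$ plus the curvature bound $KT^{-1/2}$ via standard comparison. Your direct route avoids the compactness-by-contradiction step at the cost of this extra (easy) verification; the paper's route leans on Anderson's theorem, whose hypotheses (curvature, injectivity radius, volume) are the same package delivered by Theorem \ref{existenceII}. Both arguments are sound and of essentially equal strength.
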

\begin{proof}
Suppose there exists a sequence of Riemannian $4$-manifolds $(M_i,g_i)_{i\in\mathbb{N}}$ satisfying the desired properties but
the elements in this sequence are pairwise not diffeomorphic. Using Theorem \ref{existenceII} we may smooth out each of these manifolds, then
we may apply \cite[Theorem 2.2, pp. 464-466]{anderson1989ricci} at a fixed later time point which yields a contradiction.
\end{proof}

\begin{proof}[Proof of Theorem \ref{th:1aa}]
The proof is nearly analogous to the proof of Theorem \ref{th:1} but the argumentation is slightly different. Throughout, using Corollary \ref{diffeofin},  we assume that $M_i=M$ for all $i\in\mathbb{N}$, 
applying  Theorem \ref{th:4.2}, we may assume, that for each $i\in\mathbb{N}$ the $L^2$-flow on $M$ with initial data $g_i$ exists on $[0,T]$ and satisfies  \eqref{eq:2.6-}, \eqref{eq:2.6}, \eqref{eq:2.6a} and \eqref{eq:2.6b}. Using Corollary \ref{GHclose}, we choose a monotone decreasing sequence $(t_j)_{j\in\mathbb{N}}\subseteq (0,1]$ converging to zero, so that
\begin{equation*}
d_{GH}((M,g_i),(M,g_i(t_j)))<\frac{1}{3j}\hspace{1cm}\forall i,j\in\mathbb{N}
\end{equation*}
 \eqref{eq:2.6} and \eqref{boundcurv} together imply 
\begin{equation}
\left\Vert \nabla^m Rm_{g_i(t_j)} \right\Vert_{L^{\infty}(M,g_i(t_j))}\leq C(m)  t_j^{-\frac{2+m}{4}}\hspace{1cm}\forall i,j\in\mathbb{N}
\end{equation}
for each $m\in \mathbb{N}$, \eqref{eq:2.6a} implies
\begin{equation*}
inj_{g_i(t_j)}(M)\geq  t_j^{\frac{1}{4}}\hspace{1cm}\forall i,j\in\mathbb{N}
\end{equation*}

Applying the same argumentation as in the proof of Theorem \ref{th:1}  we infer 
\begin{equation*}
v_0(\delta)\leq Vol_{g_i(t_j)}{(M)}\leq V_0(D,\Lambda)
\end{equation*}
for all $i,j\in\mathbb{N}$. We want to point out that $\delta>0$ only depends on the space dimension which is constant. Using the
 flow convergence result in \cite[Corollary 1.5, p. 42]{streets2013long} on each time interval $[t_{j+1},t_j]$, starting with $t_0$, we obtain
a subsequence $(M_{i(j,k)},g_{i(j,k)}(t_j))_{k\in\mathbb{N}}$ as well as a family of Riemannian manifolds $(M,g_{\infty, j}(t))_{t\in [t_{j+1},t_j]}$ so that for each $t\in [t_{j+1},t_j]$ the sequence of Riemannian manifolds 
$(M,g_{i(j,k)}(t))_{k\in\mathbb{N}}$ converges smoothly to $(M,g_{\infty, j}(t))$ and the family of manifolds $(M_{\infty, j},g_{\infty, j}(t))_{t\in [t_{j+1},t_j]}$ is 
also a solution to the $L^2$-flow in the sense of \cite[Corollary 1.5, p. 42]{streets2013long}. Since $\mathcal{G}_{g_i(t)}\leq\mathcal{G}_{g_i}\leq \frac{1}{i}$ for all $i\in\mathbb{N}$, we conclude that $\mathcal{G}_{g_{\infty, j}(t)}=0$ for all $t\in [t_{j+1},t_j]$. 
Hence, at infinity, the metric does not change along the interval $[t_{j+1},t_j]$, which means that the manifold $(M_{\infty, j},g_{\infty, j}(t_j))=(M_{\infty, j},g_{\infty, j}(t_{j+1}))=:(M,g)$ is an Einstein manifold. Inductively, we obtain for
 each $j\in\mathbb{N}$ a sequence $(M_{i(j,k)},g_{i(j,k)}(t_j))_{k\in\mathbb{N}}$ that is a subsequence from $(M_{i(j-1,k)},g_{i(j-1,k)}(t_j))_{k\in\mathbb{N}}$, so that the sequence $(M_{i(j,k)},g_{i(j,k)}(t_j))_{k\in\mathbb{N}}$ converges to the Einstein manifold $(M,g)$. Using the same diagonal choice as in the Proof of Theorem \ref{th:1}, we infer that  there exists a subsequence of $(M_i,g_i)_{i\in\mathbb{N}}$ that also converges in the Gromov-Hausdorff topology to $(M,g)$.
\end{proof}

\appendix
\section{Auxilary Results}

In this paragraph we present some results which we have used in this work. Most of them are quoted from \cite{streets2016concentration}.

\begin{lem}
Let $(M^n,g(t))_{t\in [t_1,t_2]}$ be a smooth family of Riemannian manifolds and let $\gamma: [0,L]\longrightarrow M$ be a smooth curve. Then we have the estimates:
\begin{align}
\label{ap:1}
\left|\frac{d}{d t} L(\gamma,t)\right| &\leq \int_{\gamma}{{|g'(t)|_{g(t)}}\, d\sigma_t} \\
\label{ap:1a}
\left|\log\left(\frac{|v|^2_{g(t_2)}}{|v|^2_{g(t_1)}}\right)\right| &\leq \int_{t_1}^{t_2}{{\left\Vert  g'(t)  \right\Vert_{L^{\infty}(M,g(t))}}\, dt } \hspace{1cm}\forall v\in TM\\
\label{ap:2}
\left| \frac{\partial }{\partial t} \left| \nabla_{\dot{\gamma}}   \dot{\gamma} \right|_{g(t)} ^2\right|&\leq \left| g' \right|_{g(t)}  \left| \nabla_{\dot{\gamma}}   
\dot{\gamma} \right|_{g(t)} ^2+C(n) |\dot{\gamma}|_{g(t)} ^2  \left| \nabla_{\dot{\gamma}}   \dot{\gamma} \right|_{g(t)}   \left|\nabla g'\right|_{g(t)}
\end{align}
on $M\times (t_1,t_2)$.
\end{lem}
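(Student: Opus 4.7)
The plan is to verify each of the three estimates as an essentially pointwise computation, all three reducing to the tensorial Cauchy--Schwarz bound that for any symmetric $(0,2)$-tensor $h$ and any vector $v$, $|h_{ij}v^iv^j| \leq |h|_{g(t)} |v|^2_{g(t)}$. This in turn follows by passing to an orthonormal frame for $g(t)$ in which $v$ is aligned with the first basis vector, so that $h_{ij}v^iv^j = h_{11}|v|^2_{g(t)}$ and $|h_{11}| \leq |h|_{g(t)}$.

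For \eqref{ap:1}, I would differentiate $L(\gamma,t) = \int_0^L |\dot\gamma|_{g(t)}\, ds$ under the integral sign. From $|\dot\gamma|^2_{g(t)} = g_{ij}(t)\dot\gamma^i\dot\gamma^j$ one obtains $\partial_t|\dot\gamma|_{g(t)} = (2|\dot\gamma|_{g(t)})^{-1}\, g'_{ij}\dot\gamma^i\dot\gamma^j$, and the preliminary Cauchy--Schwarz bound applied with $h = g'(t)$, $v = \dot\gamma$ yields an integrand bounded by $\tfrac{1}{2}|g'(t)|_{g(t)}|\dot\gamma|_{g(t)}$; the innocuous factor $\tfrac{1}{2}$ is absorbed into the statement. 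Estimate \eqref{ap:1a} is essentially the same idea applied to a fixed $v\in TM$: study $f(t) := \log|v|^2_{g(t)}$, compute $f'(t) = g'_{ij}v^iv^j/(g_{ij}(t)v^iv^j)$, apply the same Cauchy--Schwarz bound to see $|f'(t)| \leq |g'(t)|_{g(t)} \leq \Vert g'(t)\Vert_{L^{\infty}(M,g(t))}$ pointwise, and then integrate from $t_1$ to $t_2$.

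The substantive step is \eqref{ap:2}. In local coordinates write $T^k := \ddot\gamma^k + \Gamma^k_{ij}(t)\dot\gamma^i\dot\gamma^j$, so that $T$ represents $\nabla_{\dot\gamma}\dot\gamma$ at time $t$ and $|T|^2_{g(t)} = g_{kl}(t)T^kT^l$. Since the curve $\gamma$ is $t$-independent, differentiating in $t$ gives
\[
\partial_t |T|^2_{g(t)} = g'_{kl}T^kT^l + 2g_{kl}T^k(\partial_t\Gamma^l_{ij})\dot\gamma^i\dot\gamma^j.
\]
The first term is bounded by $|g'(t)|_{g(t)}|T|^2_{g(t)}$ via the preliminary Cauchy--Schwarz with $v = T$. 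The hard part, and the only substantive geometric input of the proof, is the second term. Here I would invoke the standard variation formula
\[
\partial_t\Gamma^k_{ij} = \tfrac{1}{2}g^{kl}\bigl(\nabla_i g'_{jl} + \nabla_j g'_{il} - \nabla_l g'_{ij}\bigr),
\]
from which one obtains the pointwise bound $|\partial_t\Gamma|_{g(t)} \leq C(n)|\nabla g'(t)|_{g(t)}$. Setting $A^l := (\partial_t\Gamma^l_{ij})\dot\gamma^i\dot\gamma^j$, one then estimates $|A|_{g(t)} \leq |\partial_t\Gamma|_{g(t)} |\dot\gamma|^2_{g(t)}$ and applies the ordinary vector Cauchy--Schwarz $|g_{kl}T^kA^l| \leq |T|_{g(t)}|A|_{g(t)}$; combining with the bound on the first term produces exactly the right-hand side of \eqref{ap:2}.
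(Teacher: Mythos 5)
Your proposal is correct, and it is in substance the standard argument: the paper itself disposes of \eqref{ap:1} by unit-speed parametrization and simply cites Hamilton's Lemma 14.2 for \eqref{ap:1a} and Streets for \eqref{ap:2}, and your computations (first variation of length with the tensorial Cauchy--Schwarz bound, the logarithmic derivative of $|v|^2_{g(t)}$, and the variation formula $\partial_t\Gamma^k_{ij}=\tfrac{1}{2}g^{kl}(\nabla_i g'_{jl}+\nabla_j g'_{il}-\nabla_l g'_{ij})$ for the term involving $\nabla g'$) are exactly what lies behind those citations. So you take essentially the same route, merely written out self-containedly rather than outsourced to the references.
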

\begin{proof}
Using a unit-speed-parametrization of $\gamma$ we infer \eqref{ap:1}. Estimate \eqref{ap:1a} is proved in \cite[14.2 Lemma, p. 279]{hamilton1982three}. Estimate \eqref{ap:2}
is stated in \cite[p. 271]{streets2016concentration}.
\end{proof}

A simple calculation shows the following scaling behavior of the quantity $f_k$ (cf. Definition \ref{fk}). 

\begin{lem}\label{fkscaling}
Let $(M^n,g)$ be a closed Riemannian manifold, $k\in\mathbb{N}$,  $x\in M$ and $c>0$. Then we have the following equality
\begin{equation}\label{fkscalingeq}
f_k(x,cg)=c^{-1}f_k(x,g) 
\end{equation}
\end{lem}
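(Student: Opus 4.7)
The plan is to verify the identity by directly computing how each summand $|\nabla^j Rm_g|_g^{2/(2+j)}$ transforms under the homothety $g \mapsto \tilde g := cg$, and to observe that the exponent $2/(2+j)$ has been engineered precisely so that every term picks up the same factor $c^{-1}$, independent of $j$.

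First I would record that a constant conformal rescaling leaves the Christoffel symbols invariant: in the standard formula, the factor $c$ coming from $\tilde g_{ij}$ cancels the factor $c^{-1}$ coming from $\tilde g^{ij}$, so $\tilde\Gamma^k_{ij} = \Gamma^k_{ij}$ and hence $\tilde\nabla = \nabla$. Taking covariant derivatives therefore commutes with rescaling. Since the $(1,3)$ Riemann tensor $R^\ell{}_{ijk}$ is built from Christoffel symbols alone, it is unchanged by the rescaling, and lowering its upper index using $\tilde g$ yields $\tilde R_{ijk\ell} = c\, R_{ijk\ell}$. Iterating, I obtain $\widetilde{\nabla^j Rm} = c\,\nabla^j Rm$ as a $(0,4+j)$-tensor.

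Next I would compute the pointwise norm. The squared norm contracts the $4+j$ lower indices of two copies of the tensor using $\tilde g^{-1} = c^{-1} g^{-1}$, which supplies a factor $c^{-(4+j)}$, while the two copies contribute a factor $c^{2}$. Thus
$$|\widetilde{\nabla^j Rm}|_{\tilde g}^{2} \;=\; c^{-(4+j)} \cdot c^{2}\, |\nabla^j Rm|_g^{2} \;=\; c^{-(2+j)}\,|\nabla^j Rm|_g^{2},$$
so $|\widetilde{\nabla^j Rm}|_{\tilde g} = c^{-(2+j)/2}\,|\nabla^j Rm|_g$. Raising this to the power $2/(2+j)$ gives the clean factor $c^{-1}$, uniformly in $j$. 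Summing from $j=0$ to $k$ yields $f_k(x,cg) = c^{-1} f_k(x,g)$, as claimed.

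There is no real obstacle here: the entire content of the lemma is the observation that the unusual exponents $2/(2+j)$ in the definition of $f_k$ are the unique ones that homogenize all summands simultaneously under scaling. The only step requiring a bit of bookkeeping is the scaling of the Riemann tensor itself, since the $(1,3)$- and $(0,4)$-versions scale differently; treating $Rm$ consistently as a $(0,4)$-tensor throughout avoids any confusion, and after that the computation is essentially a one-line index count.
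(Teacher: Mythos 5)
Your computation is correct and is exactly the ``simple calculation'' the paper alludes to without writing out: constant rescaling fixes the Christoffel symbols, the $(0,4+j)$-tensor $\nabla^j Rm$ picks up a factor $c$, the $\tilde g$-norm contributes $c^{-(4+j)/2}$, and the exponent $\frac{2}{2+j}$ turns the net factor $c^{-(2+j)/2}$ into a uniform $c^{-1}$ for every summand. Nothing is missing.
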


From the definition of the gradient in \cite[4.10 Definition, p. 119]{besse2007einstein} we obtain:

\begin{lem}\label{energydifferencelem}
Let $(M^n,g(t))_{t\in[0,T]}$ be a smooth solution to the flow given in \eqref{eq:2.5} then we have:
\begin{equation}\label{eq:2.7}
\int_{0}^t{\int_{M}{|\text{grad }\mathcal{F}_{g(s)}|^2\, d{V_{g(s)}}}ds}=\mathcal{F}(g(0))-\mathcal{F}(g(t))
\end{equation}
for all $t\in [0,T]$.
\end{lem}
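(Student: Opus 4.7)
The plan is to derive \eqref{eq:2.7} as the standard energy identity for a gradient flow, by differentiating $\mathcal{F}(g(t))$ in time, invoking the variational characterization of the gradient, and then substituting the flow equation \eqref{eq:2.5}. Everything is elementary once the defining property of $\mathrm{grad}\,\mathcal{F}$ is in hand, so the work is really just to assemble these ingredients carefully.

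First I would recall the definition of the gradient of a Riemannian functional from \cite[4.10 Definition, p.~119]{besse2007einstein}: for every smooth one-parameter family $(g(t))$ with $g'(t)\in\Gamma(S^2T^*M)$ one has
\begin{equation*}
\frac{d}{dt}\mathcal{F}(g(t))=\int_{M}\langle \mathrm{grad}\,\mathcal{F}_{g(t)},\,g'(t)\rangle_{g(t)}\,dV_{g(t)}.
\end{equation*}
This is the only nontrivial input; it encodes both the first variation of the curvature integrand and the first variation of $dV_{g(t)}$, which is why the explicit gradient formula appearing in \eqref{eq:2.5} can be used as a black box here.

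Next I would substitute the evolution equation $g'(t)=-\mathrm{grad}\,\mathcal{F}_{g(t)}$ from \eqref{eq:2.5} into the above identity, which yields
\begin{equation*}
\frac{d}{dt}\mathcal{F}(g(t))=-\int_{M}|\mathrm{grad}\,\mathcal{F}_{g(t)}|_{g(t)}^{2}\,dV_{g(t)}.
\end{equation*}
The right-hand side is a continuous (in fact smooth) function of $t\in[0,T]$ because $(g(t))$ is a smooth solution, so integration from $0$ to $t$ and the fundamental theorem of calculus give
\begin{equation*}
\mathcal{F}(g(t))-\mathcal{F}(g(0))=-\int_{0}^{t}\int_{M}|\mathrm{grad}\,\mathcal{F}_{g(s)}|_{g(s)}^{2}\,dV_{g(s)}\,ds,
\end{equation*}
which is exactly \eqref{eq:2.7} after rearrangement.

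There is essentially no obstacle beyond making sure one is entitled to differentiate under the integral sign; this is justified by smoothness of $(g(t))$ on the closed manifold $M$ together with compactness of $M$, which gives uniform bounds on all the quantities involved on any subinterval $[0,t]\subset[0,T]$. I would mention this point in one sentence rather than belabor it, since the real content of the lemma is just the gradient-flow identity.
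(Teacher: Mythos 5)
Your proposal is correct and is exactly the argument the paper intends: the paper derives \eqref{eq:2.7} directly from the variational definition of the gradient in \cite[4.10 Definition, p.~119]{besse2007einstein}, substituting $g'=-\mathrm{grad}\,\mathcal{F}$ and integrating in time, just as you do. The only addition you make is the (harmless and correct) remark about differentiating under the integral sign, which the paper leaves implicit.
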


In particular, we can see that the energy $\mathcal{F}(g(t))$ is monotone decreasing under the flow given in \eqref{eq:2.5}, and 
\begin{equation}\label{eq:2.7a}
\int_{0}^t{\int_{M}{|\text{grad }\mathcal{F}_{g(s)}|^2\, dV_{g(s)}}\,ds}\leq \epsilon
\end{equation}
for all $t\in [0,T]$ under the assumption that $\mathcal{F}(g_0)\leq \epsilon$

\begin{theorem}{\label{derestimates}}(\cite[Lemma 2.11, p. 269]{streets2016concentration})
Fix $m,n\geq 0$. There exists a constant $C(n,m)>0$ so that if $(M^n,g(t))_{t\in [0,T]}$ is a complete solution to the $L^2$-flow satisfying
\begin{equation}\label{boundcurv}
\sup_{t\in[0,T]}{t^{\frac{1}{2}}\left\Vert Rm_{g(t)} \right\Vert_{L^{\infty}(M,g(t))}}\leq A
\end{equation}
then for all $t\in (0,T]$,
\begin{equation}\label{derbound}
\left\Vert \nabla^m Rm_{g(t)} \right\Vert_{L^{\infty}(M,g(t))}\leq C  \left((A+1)t^{-\frac{1}{2}}\right)^{1+\frac{m}{2}}
\end{equation}
\end{theorem}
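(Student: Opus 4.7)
I would prove Theorem \ref{derestimates} by a Bernstein--Shi-type parabolic maximum principle argument adapted to the fourth-order $L^2$-curvature flow, running an induction on $m$. First, by the rescaling $g(t) \mapsto (A+1)^{-2} g((A+1)^{-4} t)$ (which, up to dimensional factors absorbed into $C(n,m)$, preserves the $L^2$-flow and the hypothesis) it suffices to establish the normalized statement that $\|Rm_{g(t)}\|_{L^\infty} \leq t^{-1/2}$ on $(0,T]$ implies $\|\nabla^m Rm_{g(t)}\|_{L^\infty} \leq C(n,m)\, t^{-(2+m)/4}$. The base case $m=0$ is the hypothesis; assume the bound is known for $0, 1,\dots, m-1$.

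\textbf{Evolution inequality.} Under $\partial_t g = -\text{grad } \mathcal{F}$, direct computation from the explicit expression in \eqref{eq:2.5} and the Bianchi identity gives that $Rm$ satisfies a fourth-order parabolic system of schematic form
\[
\partial_t Rm = -\Delta^2 Rm + \sum_{i+j\leq 2} \nabla^i Rm * \nabla^j Rm + Rm * Rm * Rm,
\]
where $*$ denotes a contraction with the metric. Commuting $\nabla^m$ through, using $[\nabla,\Delta]Rm = Rm*\nabla Rm$ and a standard Bochner identity, one obtains
\[
(\partial_t + \Delta^2)|\nabla^m Rm|^2 \leq -c\,|\nabla^{m+2} Rm|^2 + \mathcal{Q}_m,
\]
with $c(n,m) > 0$ and with $\mathcal{Q}_m$ a sum of triple products of covariant derivatives of $Rm$ of total order $2m+2$, whose top-order pieces (those involving $\nabla^{m+1} Rm$ or $\nabla^{m+2} Rm$) can be absorbed into $-c\,|\nabla^{m+2} Rm|^2$ by Cauchy--Schwarz, while the remaining pieces are controlled by factors of $|\nabla^k Rm|$ for $k \leq m$.

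\textbf{Weighted maximum principle.} Define the weighted quantity
\[
F_m := t^{(2+m)/2}\,|\nabla^m Rm|^2 + \sum_{k=0}^{m-1} \lambda_k\, t^{(2+k)/2}\,|\nabla^k Rm|^2,
\]
and choose the constants $\lambda_0,\dots,\lambda_{m-1}$ (depending only on $n,m$) large enough that, after substituting the inductive bounds for $|\nabla^k Rm|$ with $k<m$ into $\mathcal{Q}_m$ and regrouping, one obtains a pointwise differential inequality of the form
\[
(\partial_t + \Delta^2) F_m \leq \frac{C(n,m)}{t}\,F_m + C(n,m).
\]
The positive $t$-weights force $F_m(t) \to 0$ as $t \to 0^+$, and a standard parabolic maximum principle then yields $F_m \leq C(n,m)$ on $(0,T]$, which is \eqref{derbound}. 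If $M$ is merely complete rather than compact, I would localize by multiplying $F_m$ by a standard $C^4$ spatial cutoff on a ball of parabolic radius $r \sim t^{1/4}$ (finite by the hypothesis) and use the Omori--Yau or cutoff-maximum-principle version, then take $r \to \infty$ via completeness.

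\textbf{Main obstacle.} The principal difficulty, as compared with Shi's classical derivative estimates for Ricci flow, is that the leading elliptic operator is $\Delta^2$ rather than $\Delta$. Consequently the good term produced by the Bochner identity is $|\nabla^{m+2} Rm|^2$ --- \emph{two} orders higher than the controlled quantity $|\nabla^m Rm|$ --- and the intermediate-order term $|\nabla^{m+1} Rm|^2$ does not have a favourable sign. Closing the Bernstein iteration therefore hinges on the carefully weighted linear combination $F_m$, which trades higher-order positive terms for lower-order ones, together with the strict positivity of $c(n,m)$ --- the latter requiring that the principal symbol of $-\text{grad}\,\mathcal{F}$ be elliptic modulo the natural diffeomorphism gauge (the DeTurck trick used in \cite{streets2008gradient} for short-time existence). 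Once this interpolation is in place, the induction closes just as in Shi's argument for second-order flows.
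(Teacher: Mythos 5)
There is a genuine gap, and it sits at the heart of your argument. Note first that this statement is not proved in the paper at all: it is imported verbatim from \cite[Lemma 2.11]{streets2016concentration}, and Streets' own derivation of such smoothing estimates for the $L^2$-flow (going back to \cite{streets2008gradient} and \cite{streets2013long}) is carried out with \emph{integral} Bernstein--Bando--Shi type estimates: one derives evolution equations for $\int_M |\nabla^k Rm|^2$, absorbs the bad terms after integration by parts using Gagliardo--Nirenberg/Hamilton interpolation inequalities, and only then converts to pointwise bounds. This is not a stylistic choice but a necessity: your scheme culminates in applying ``a standard parabolic maximum principle'' to an inequality of the form $(\partial_t+\Delta^2)F_m\le \tfrac{C}{t}F_m+C$, and no such principle exists for fourth-order operators. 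At an interior spatial maximum of $F_m$ the Hessian is negative semidefinite, so $\Delta F_m$ has a sign, but $\Delta^2 F_m$ does not; equivalently, the biharmonic heat flow does not preserve positivity. The same objection defeats the localized Omori--Yau/cutoff variant. Relatedly, your absorption of the $|\nabla^{m+1}Rm|^2\,|Rm|$-type terms of $\mathcal{Q}_m$ into $-c\,|\nabla^{m+2}Rm|^2$ needs an interpolation inequality between consecutive derivative orders that is valid only in integral form, not pointwise, so even the differential inequality for $F_m$ is not established. These are exactly the obstructions that force the energy-method route in the fourth-order literature; acknowledging the fourth-order structure in your ``main obstacle'' paragraph does not remove them, and ellipticity of the gauge-fixed symbol is beside the point.

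A second, smaller but real error is the opening reduction: the quantity $\sup_t t^{1/2}\Vert Rm_{g(t)}\Vert_{L^\infty}$ is \emph{invariant} under the natural parabolic rescaling $g(t)\mapsto \lambda^2 g(\lambda^{-4}t)$ of the $L^2$-flow (this is precisely the rescaling used in Section \ref{prth12} with $\lambda^2=T_i^{-1/2}$), so no rescaling can normalize the constant $A$ to $1$; your proposed map also has inconsistent exponents. The correct reduction is to rescale a fixed time $t_0$ to $1$, which leaves you with a curvature bound of size comparable to $A$ on $[1/2,1]$, and the dependence $(A+1)^{1+\frac{m}{2}}$ in \eqref{derbound} must then be tracked through the estimate rather than scaled away. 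As it stands, the proposal would need to be rebuilt around $L^2$ derivative energies (or a blow-up/compactness argument) to be salvageable.
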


\begin{lem}\label{L2volestimate}
Let $M^4$ be a closed Riemannian manifold and $(M,g(t))_{t\in [0,T]}$ be a solution to the $L^2$-flow. We have the following estimates
\begin{equation}\label{VolInvariant}
Vol_{g(t)}(M)=Vol_{g(0)}(M)\text{ for all }t\in (0,T]
\end{equation}
and
\begin{align}\label{Volopenset}
\begin{split}
Vol_{g(t)}(U)^{\frac{1}{2}}=Vol_{g(0)}(U)^{\frac{1}{2}} -Ct^{\frac{1}{2}} \left(\int_0^t{\int_{U}{|\text{grad }\mathcal{F}_{g(s)}|_{g(s)}^2\, dV_{g(s)}}\, ds}\right)^{\frac{1}{2}} \\
\text{ for all }t\in (0,T]\text{ and }U\subseteq M \text{ open} 
\end{split}
\end{align}
\end{lem}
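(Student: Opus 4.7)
The argument hinges on the standard identity
\[
\frac{\partial}{\partial t}\, dV_{g(t)} \;=\; \tfrac{1}{2}\,\mathrm{tr}_{g(t)}(g'(t))\, dV_{g(t)},
\]
combined with the observation that $\mathcal{F}$ is scale-invariant in dimension four. For \eqref{VolInvariant} I would note that under a constant rescaling $g \mapsto c\,g$, $c>0$, one has $|Rm_{cg}|_{cg}^2 = c^{-2}|Rm_g|_g^2$ and $dV_{cg} = c^2\, dV_g$, so $\mathcal{F}(cg) = \mathcal{F}(g)$. Differentiating this identity at $c=1$ and using the definition of the gradient of a Riemannian functional from \cite[Chapter 4, 4.10 Definition, p.~119]{besse2007einstein} yields
\[
0 \;=\; \left.\tfrac{d}{dc}\right|_{c=1}\!\mathcal{F}(cg) \;=\; \langle \mathrm{grad}\,\mathcal{F}_g,\, g\rangle_g \;=\; \mathrm{tr}_g(\mathrm{grad}\,\mathcal{F}_g).
\]
Hence along the $L^2$-flow the velocity $g'(t) = -\mathrm{grad}\,\mathcal{F}_{g(t)}$ is pointwise trace-free, so $dV_{g(t)}$ is constant in $t$ and \eqref{VolInvariant} follows by integrating over $M$.

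For the second estimate (where the ``$=$'' should read ``$\geq$'', as is evident from its use in \eqref{noncolscaleII} in the proof of Theorem \ref{existenceII}) I would not invoke the trace-free property but rather derive a robust direct bound. Using the pointwise Cauchy-Schwarz inequality $|\mathrm{tr}_{g(t)}(g'(t))| \leq \sqrt{n}\,|g'(t)|_{g(t)} = \sqrt{n}\,|\mathrm{grad}\,\mathcal{F}_{g(t)}|_{g(t)}$ in conjunction with Cauchy-Schwarz in the spatial integral gives
\[
\left| \frac{d}{dt}\, Vol_{g(t)}(U)\right| \;\leq\; \tfrac{\sqrt{n}}{2}\, Vol_{g(t)}(U)^{1/2}\left(\int_U |\mathrm{grad}\,\mathcal{F}_{g(t)}|_{g(t)}^2\, dV_{g(t)}\right)^{1/2},
\]
whence
\[
\left|\frac{d}{dt}\, Vol_{g(t)}(U)^{1/2}\right| \;\leq\; \tfrac{\sqrt{n}}{4}\left(\int_U |\mathrm{grad}\,\mathcal{F}_{g(t)}|_{g(t)}^2\, dV_{g(t)}\right)^{1/2}.
\]
Integrating from $0$ to $t$ and applying Cauchy-Schwarz in the time variable manufactures the factor $t^{1/2}$ in front of the double integral, after which dropping the absolute value yields the asserted lower bound.

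I expect no serious obstacle. The only mildly delicate point is the division by $Vol_{g(t)}(U)^{1/2}$, which is harmless whenever $Vol_{g(0)}(U) > 0$ — the upper-bound version of the same inequality prevents the volume from collapsing in finite time — and the statement is vacuous otherwise. Strictly speaking, \eqref{VolInvariant} (applied locally) already renders the $U$-estimate immediate, but the direct integral derivation is the form that generalises and is needed for applications.
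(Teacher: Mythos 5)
Your treatment of \eqref{Volopenset} is essentially the paper's own argument: differentiate $Vol_{g(t)}(U)^{1/2}$, use $\frac{\partial}{\partial t}dV=\tfrac12\mathrm{tr}_{g(t)}(g')\,dV$ with $g'=-\mathrm{grad}\,\mathcal{F}$, bound $|\mathrm{tr}\,\mathrm{grad}\,\mathcal{F}|\leq C|\mathrm{grad}\,\mathcal{F}|$ pointwise, apply Cauchy--Schwarz in space (which cancels the $Vol^{1/2}$ in the denominator) and then in time to produce the factor $t^{1/2}$; and your reading of the ``$=$'' as ``$\geq$'' agrees with what the paper actually proves and with how \eqref{Volopenset} is used in \eqref{noncolscaleII}. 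The worry about dividing by $Vol_{g(t)}(U)^{1/2}$ is moot since $g(t)$ is a smooth metric for every $t\in[0,T]$, so nonempty open sets have positive volume.

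The gap is in your derivation of \eqref{VolInvariant}. Scale invariance of $\mathcal{F}$ in dimension four gives, via \cite[4.10 Definition]{besse2007einstein},
\begin{equation*}
0=\left.\tfrac{d}{dc}\right|_{c=1}\mathcal{F}(cg)=\int_M \langle \mathrm{grad}\,\mathcal{F}_g,\,g\rangle_g\,dV_g=\int_M \mathrm{tr}_g\big(\mathrm{grad}\,\mathcal{F}_g\big)\,dV_g ,
\end{equation*}
i.e.\ only the \emph{integrated} trace vanishes; your step identifying the $L^2$-pairing $\langle\mathrm{grad}\,\mathcal{F}_g,g\rangle$ with the pointwise trace, and the ensuing claim that $g'(t)$ is pointwise trace-free and $dV_{g(t)}$ is constant, is false. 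Indeed, from the gradient formula in \eqref{eq:2.5} one computes $\mathrm{tr}_g(\mathrm{grad}\,\mathcal{F}_g)=2\,\mathrm{tr}_g(\delta d\,Rc_g)$, which by the contracted second Bianchi identity is a nonzero multiple of $\Delta R_g$ --- a divergence-type quantity that integrates to zero on a closed manifold but does not vanish pointwise. (If it did vanish pointwise, $Vol_{g(t)}(U)$ would be constant for every open $U$ and \eqref{Volopenset} would be a trivial identity with no error term, which is plainly not the situation; for the same reason your closing remark that \eqref{VolInvariant} ``applied locally'' makes the $U$-estimate immediate is incorrect.) The fix is minor: integrate the trace identity, which gives $\frac{d}{dt}Vol_{g(t)}(M)=-\tfrac12\int_M\mathrm{tr}_{g(t)}(\mathrm{grad}\,\mathcal{F}_{g(t)})\,dV_{g(t)}=0$ and hence \eqref{VolInvariant}; this is a legitimate self-contained alternative to the paper's route, which simply quotes the corresponding identity from \cite[p.~44]{streets2013long}.
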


\begin{proof}
The equation \eqref{VolInvariant} is a special case of the first equation in \cite[p. 44]{streets2013long}. Furthermore
\begin{align*}
&\left[Vol_{g(t)}(U)\right]^{\frac{1}{2}}-\left[Vol_{g(0)}(U)\right]^{\frac{1}{2}}\\
=&\int_0^t{\frac{d}{ds}{\left[Vol_{g(s)}(U)\right]^{\frac{1}{2}}}\, ds}
=\frac{1}{2}\int_0^t{\frac{\frac{d}{ds}{Vol_{g(s)}(U)}}{{\left[Vol_{g(s)}(U)\right]^{\frac{1}{2}}}}\, dt}\\
=&-\frac{1}{4}\int_0^t{\frac{\int_{U}{\text{tr}_{g(s)}\text{ grad }\mathcal{F}_{g(s)}}\, dV_{g(s)}}{{\left[Vol_{g(s)}(U)\right]^{\frac{1}{2}}}}\, ds}\\
\geq &-\frac{1}{4}\int_0^t{\frac{\left(\int_{U}{|\text{tr}_{g(s)}\text{ grad }\mathcal{F}_{g(s)}|^2}\, dV_{g(s)}\right)^{\frac{1}{2}}}{{\left[Vol_{g(s)}(U)\right]^{\frac{1}{2}}}}{\left[Vol_{g(s)}(U)\right]^{\frac{1}{2}}}\, ds}\\
\geq & -C \int_0^t{\left(\int_{U}{|\text{grad }\mathcal{F}_{g(s)}|_{g(s)}^2\, dV_{g(s)}}\right)^{\frac{1}{2}}\, ds}\\
\geq & -Ct^{\frac{1}{2}} \left(\int_0^t{\int_{U}{|\text{grad }\mathcal{F}_{g(s)}|_{g(s)}^2\, dV_{g(s)}}\, ds}\right)^{\frac{1}{2}} \\
\end{align*} 
\end{proof}

\begin{lem}\label{coarealem}(\cite[Lemma 2.8, p. 268]{streets2016concentration})
Let $(M,g)$ and $(N,h)$ be smooth Riemannian manifolds and let $F: M\longrightarrow N$ be a smooth submersion. Furthermore, let $\phi: M\longrightarrow [0,\infty)$ be a smooth function, then one has:
\begin{equation}\label{coarea}
\int_{M}{\phi\, dV_g}=\int_{y\in N}{\int_{x\in F^{-1}(y)}{\frac{\phi(x)}{NJac\, F(x)}\, dF^{-1}(y)}\, dV_h}
\end{equation}
where $NJac\, F(x)$ is the determinant of the derivative restricted to the orthogonal complement of its kernel. This quantity is also called  ``normal Jacobian''. 
\end{lem}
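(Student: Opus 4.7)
The plan is to reduce to a local statement via a partition of unity on $M$, since both sides of \eqref{coarea} are linear in $\phi$ and the measures are $\sigma$-additive. It suffices to establish the identity when $\phi$ is supported in a small neighborhood $U \subseteq M$ of a chosen point $x_0$, on which the submersion theorem provides coordinates $(x^1,\dots,x^n)$ on $U$ and $(y^1,\dots,y^k)$ on $F(U) \subseteq N$ in which $F$ takes the form $(x^1,\dots,x^n) \mapsto (x^1,\dots,x^k)$. In these coordinates the fibers $F^{-1}(y) \cap U$ are the coordinate slices $\{x^i = y^i : i \leq k\}$, parametrized smoothly by $(x^{k+1},\dots,x^n)$, so Fubini's theorem is available on the product trivialization.

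The next step is to express all three measures appearing in the statement in terms of a common orthonormal framing. I would use the $g$-orthogonal splitting $TU = H \oplus V$ with $V_x := \ker dF_x$ and $H_x := V_x^{\perp}$, picking smooth orthonormal frames $(e_1,\dots,e_k)$ for $H$ and $(e_{k+1},\dots,e_n)$ for $V$, together with an orthonormal frame $(f_1,\dots,f_k)$ for $TN$ over $F(U)$. By definition, $NJac\, F(x)$ is the absolute determinant of the linear isomorphism $dF_x|_{H_x} : H_x \to T_{F(x)}N$ computed with respect to these orthonormal bases. Three elementary identities then drive the argument: (i) $dV_g = \theta^1 \wedge \cdots \wedge \theta^n$ where $\theta^i$ is the coframe dual to $e_i$; (ii) the induced Riemannian volume on the fiber $F^{-1}(y)\cap U$ equals $\theta^{k+1} \wedge \cdots \wedge \theta^n$ because the fiber is tangent to $V$ and the frame on $V$ is orthonormal; and (iii) the restriction of $F^{*}(dV_h)$ to any $H$-slice equals $NJac\, F(x)\, \theta^1 \wedge \cdots \wedge \theta^k$.

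Assembling (i)--(iii), Fubini on the product trivialization yields
\begin{equation*}
\int_{F(U)} \int_{F^{-1}(y) \cap U} \frac{\phi(x)}{NJac\, F(x)}\, dV_{F^{-1}(y)}(x)\, dV_h(y) = \int_U \frac{\phi(x)}{NJac\, F(x)} \cdot NJac\, F(x)\, dV_g(x) = \int_U \phi\, dV_g,
\end{equation*}
which is \eqref{coarea} for $\phi$ supported in $U$. Summing over the partition of unity then extends the identity to arbitrary nonnegative smooth $\phi$ on $M$ by monotone convergence, allowing $\int_M \phi\, dV_g = +\infty$ if needed.

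The step I expect to be the main obstacle is identity (iii): showing that the restriction of $F^{*}(dV_h)$ to an $H$-slice is exactly $NJac\, F(x)$ times the horizontal volume form. This is not a coordinate computation but a linear-algebraic fact about determinants of maps between inner product spaces, namely that $|\det(dF_x|_{H_x})|$ is precisely the ratio by which $dF_x|_{H_x}$ rescales $k$-dimensional volume. Once (iii) is in place, the rest is classical Fubini together with the usual bookkeeping of the smooth coarea formula.
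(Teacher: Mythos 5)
Your argument is correct in outline, but note that the paper does not prove this lemma at all: it is quoted verbatim from Streets (\cite[Lemma 2.8, p. 268]{streets2016concentration}) and used as a black box, so there is no internal proof to compare against. What you have written is essentially the standard proof of the smooth coarea formula for Riemannian submersions, and the three ingredients you isolate are the right ones: the adapted orthonormal (co)frame for the splitting $TU=H\oplus V$, the identification of the induced fiber volume with $\theta^{k+1}\wedge\cdots\wedge\theta^{n}$, and the identity $F^{*}(dV_h)=\pm\,NJac\,F\;\theta^{1}\wedge\cdots\wedge\theta^{k}$. Your step (iii) is in fact immediate once one observes that $F^{*}(dV_h)$ annihilates any vertical argument, hence is proportional to $\theta^{1}\wedge\cdots\wedge\theta^{k}$, with proportionality constant $dV_h(dF e_1,\dots,dF e_k)=\pm\det\bigl(\langle dF e_i,f_j\rangle\bigr)=\pm\,NJac\,F$; so the ``main obstacle'' you anticipate is a one-line computation. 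The step that actually deserves the explicit care is the middle display: passing from the pointwise identity $\phi\,dV_g=\frac{\phi}{NJac\,F}\,F^{*}(dV_h)\wedge\theta^{k+1}\wedge\cdots\wedge\theta^{n}$ to the iterated integral is integration along the fiber, which you should carry out in the submersion normal-form coordinates (where it reduces to classical Fubini--Tonelli together with the change-of-variables formula relating the coordinate Lebesgue measures to $dV_g$, $dV_h$ and the fiber measures), and you should either fix local orientations or phrase everything in terms of densities to avoid sign ambiguities when summing over the partition of unity. With those routine points filled in, your proof is complete and self-contained, which is more than the paper provides for this statement.
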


\begin{lem}\label{ricflatimplboundcurv}
Let $n\in\mathbb{N}$, $\iota>0$ and let $(M^n,g)$ be a complete $n$-dimensional Riemannian manifold
such that the following is true
\begin{align*}
Rc_g&\equiv 0 \\
\left\Vert Rm_g \right\Vert_{L^{\infty}(M^n,g)}&< \infty\\
inj_g(M)&\geq \iota
\end{align*}
then
\begin{equation*}
\left\Vert Rm_g \right\Vert_{L^{\infty}(M^n,g)}\leq C(n,\iota).
\end{equation*}
\end{lem}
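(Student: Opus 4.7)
The plan is to reduce the statement to an application of Anderson's harmonic coordinate theorem followed by a Schauder bootstrap on the Ricci-flat equation; no blow-up or compactness argument is needed.

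First, I would observe that $inj_g(M)\geq\iota$ alone yields a universal, dimension-only lower bound on the volume of small balls: by Croke's inequality, $Vol_g(B_g(x,\iota/2))\geq c(n)\iota^n$ for every $x\in M$, with no curvature assumption used. Combined with the trivial Ricci bound $|Rc_g|\equiv 0$, the local version of Anderson's harmonic radius estimate (cf.\ \cite{anderson1989ricci}) provides an $r_0=r_0(n,\iota)>0$ such that around every $x\in M$ there is a harmonic coordinate chart on $B_g(x,r_0)$ in which $g_{ij}$ is bi-Lipschitz comparable to $\delta_{ij}$ and satisfies $\|g_{ij}\|_{C^{1,\alpha}}\leq K_0(n,\iota)$. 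The finiteness assumption $\|Rm_g\|_{L^\infty}<\infty$ is not used here; its role is merely to ensure that the conclusion $\|Rm_g\|_{L^\infty}\leq C(n,\iota)$ is a meaningful a priori statement.

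Next, I would use the standard fact that in harmonic coordinates the Ricci tensor takes the quasilinear form $R_{ij}=-\tfrac{1}{2}g^{ab}\partial_a\partial_b g_{ij}+Q_{ij}(g,\partial g)$, where $Q_{ij}$ is smooth in $g$ and quadratic in $\partial g$. The hypothesis $Rc_g\equiv 0$ turns this into the elliptic system $g^{ab}\partial_a\partial_b g_{ij}=2Q_{ij}(g,\partial g)$. Given the $C^{1,\alpha}$-control on $g$, the coefficients $g^{ab}$ are $C^{0,\alpha}$ with uniform ellipticity constants and the right-hand side is $C^{0,\alpha}$; interior Schauder estimates on balls of half-radius $r_0/2$ then upgrade this to $\|g_{ij}\|_{C^{2,\alpha}}\leq C(n,\iota)$.

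Finally, since the components of the Riemann tensor are polynomial expressions in $g$, $g^{-1}$, $\partial g$, and $\partial^2 g$, the $C^{2,\alpha}$-bound on $g$ translates directly into a pointwise curvature bound $|Rm_g|(x)\leq C(n,\iota)$, and taking the supremum over $x\in M$ gives the claim. The only real obstacle is invoking Anderson's harmonic radius estimate with constants depending solely on $n$ and $\iota$ in this complete, non-compact setting (so that the chart radius $r_0$ is uniform across $M$); once the harmonic chart of definite size is available, the Schauder bootstrap and the passage from $\|g\|_{C^{2,\alpha}}$ to a pointwise bound on $|Rm|$ are entirely routine.
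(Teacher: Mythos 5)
Your route is genuinely different from the paper's. The paper argues by contradiction and blow-up: it rescales a sequence of Ricci-flat manifolds with curvature suprema $C_i\to\infty$ so that $\Vert Rm_{h_i}\Vert_{L^\infty}=1$ and $inj_{h_i}(M_i)\geq \sqrt{C_i}\,\iota\to\infty$, uses the elliptic equation $\Delta_{h_i}Rm_{h_i}=Rm_{h_i}\ast Rm_{h_i}$ and elliptic regularity to get uniform higher-order bounds, extracts a pointed limit via \cite[Theorem 2.2]{anderson1989ricci} and Sakai's continuity of the injectivity radius, and then applies the Cheeger--Gromoll splitting theorem \cite{cheeger1971splitting} iteratively to identify the limit with flat $\mathbb{R}^n$, contradicting $|Rm|(p_\infty)\geq\tfrac12$. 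You instead invoke a uniform $C^{1,\alpha}$ harmonic radius and a Schauder bootstrap of the Ricci-flat system in harmonic coordinates; this avoids any explicit blow-up and does not even use the hypothesis $\Vert Rm_g\Vert_{L^\infty}<\infty$, which the paper needs only to set up its rescaling. What it buys is a shorter, purely local argument; what it hides is that the blow-up work is buried inside the harmonic radius theorem you cite, whose proof is close in spirit to the paper's.

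One step must be repaired. As written you feed the harmonic radius estimate the inputs ``volume lower bound (via Croke) plus $Rc\equiv 0$''. That combination does \emph{not} control the harmonic radius: rescaled Eguchi--Hanson (more generally ALE Ricci-flat) spaces are Ricci-flat and volume non-collapsed at all scales below a fixed one, yet their curvature, and hence their harmonic radius, degenerates --- this is exactly the phenomenon forcing $\delta$ close to $1$ in Theorem \ref{th:1aa}. The statement you actually need is Anderson's harmonic radius estimate under a two-sided Ricci bound together with a lower bound on the injectivity radius (see \cite{anderson1990convergence}); since $inj_g(M)\geq\iota$ is a hypothesis of the lemma, this applies directly, and the Croke detour should simply be deleted. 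Note also that \cite[Theorem 2.2]{anderson1989ricci} is not the right reference for this step, since it presupposes an $L^\infty$ curvature bound --- the very quantity being estimated here. With the harmonic radius of definite size $r_0(n,\iota)$ so obtained, your remaining steps --- uniform ellipticity and $C^{0,\alpha}$ data in $g^{ab}\partial_a\partial_b g_{ij}=2Q_{ij}(g,\partial g)$, interior Schauder on balls of radius $r_0/2$, and the expression of $Rm$ through two derivatives of $g$ --- are routine and yield $\Vert Rm_g\Vert_{L^\infty(M,g)}\leq C(n,\iota)$; completeness and non-compactness are no obstacle because the whole estimate is local.
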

\begin{proof}
We argue by contradiction. Suppose this statement would be wrong, then we could find a sequence of complete $n$-dimensional 
Ricci-flat manifolds $(M_i,g_i)_{i\in\mathbb{N}}$ so that 
\begin{align*}
inj_{g_i}(M_i)&\geq \iota
\intertext{and}
\left\Vert Rm_{g_i}\right\Vert_{L^{\infty}(M_i,g_i)}&=C_i
\intertext{where}
\lim_{i\to\infty}{C_i}&=\infty
\end{align*}
We construct a blow-up sequence as follows:  for each $i\in\mathbb{N}$ let
\begin{equation*}
h_i:=C_i \cdot g_i
\end{equation*}
so that

\begin{align*}
inj_{h_i}(M_i)&\geq \sqrt{C_i} \iota
\intertext{and}
\left\Vert Rm_{h_i}\right\Vert_{L^{\infty}(M_i,h_i)}&=1
\end{align*}
For each $i\in\mathbb{N}$ we choose a fixed point $p_i\in M_i$, so that $|Rm_{h_i}(p_i)|_{h_i}\geq \frac{1}{2}$. Using $Rc_{h_i}\equiv 0$, the first equation on
\cite[p. 461]{anderson1989ricci} or \cite[7., 7.1. Theorem, p. 274]{hamilton1982three} implies
\begin{equation}\label{DeltaRmEinst}
\Delta_{h_i} Rm_{h_i}=Rm_{h_i}\ast Rm_{h_i}
\end{equation}
and consequently
\begin{equation*}
\left\Vert \Delta_{h_i} Rm_{h_i}\right\Vert_{L^{\infty}(M_i,h_i)}\leq K(n)
\end{equation*}
Furthermore, from \cite[Lemma 1]{hildebrandt1977existence}, we obtain uniform $C^0$-bounds on the metrics $(h_i)_{i\in\mathbb{N}}$ in normal coordinates. Hence, an iterative application
of the theory of linear elliptic equations of second order to \eqref{DeltaRmEinst}, following the arguments of \cite[p. 478, second paragraph]{anderson1989ricci}, we obtain uniform higher order estimates, i.e.:
\begin{equation*}
 \left\Vert \nabla_{h_i}^k Rm_{h_i}\right\Vert_{L^{\infty}(M_i,h_i)}\leq K(n,k)
\end{equation*}
for all $i,k\in\mathbb{N}$. Hence, \cite[Theorem 2.2, pp. 464-466]{anderson1989ricci} implies that there exists a subsequence $(M_i,g_i,p_i)_{i\in\mathbb{N}}$ that converges in the pointed $C^{k,\alpha}$-sense, where 
$k\in\mathbb{N}$ is arbitrary, to a smooth manifold $(X,h,p)$ satisfying
\begin{equation*}
|Rm_{h}(p)|_{h}\geq \frac{1}{2}
\end{equation*}
and, using \cite[Theorem]{sakai1983continuity}
\begin{equation*}
inj_{h}(X,p)=\infty 
\end{equation*}
An iterative application of \cite[Theorem 2]{cheeger1971splitting} implies that $(X,h,p)=(\mathbb{R}^n,g_{euc},0)$ which yields a contradiction.
\end{proof}

\section{Notation}
Here, we give an overview of the notation that we are using in this work. Sometimes it is clear that
a quantity depends on a certain metric. In this situation we often omit the dependency in the notation, i.e. $Rm_g=Rm$ for instance.
\begin{itemize}
\item For $i\in \{1,...,n\}$ $\partial_i=\frac{\partial}{\partial x^i}$ denotes a coordinate vector in a local coordinate system
\item $g_{ij}$ is a Riemannian metric in a local coordinate system and $g^{ij}$ is the inverse of the Riemannian metric
\item $dV_g=dV$ is the volume form induced by a Riemannian metric $g$
\item $Vol_g(\cdot)=Vol(\cdot)$ is the $n$-dimensional volume of a set in a Riemannian manifold $(M,g)$
\item $dA_g=dA$ is the $n-1$-dimensional volume form induced by a Riemannian metric $g$
\item $Area_g(\cdot)=Area(\cdot)$ is the $n-1$-dimensional volume of a set in a Riemannian manifold $(M,g)$
\item $\omega_n$ is the euclidean volume of a euclidean unit ball 
\item $Rm_g=Rm$ is the Riemannian curvature tensor. As in \cite{streets2008gradient}, in local coordinates,
 the sign convention is consistent with \cite[p. 5]{chow2006hamilton}, i.e. $R_{ijkl}=R^m_{ijk} g_{ml}$. 
\item $Rc_g=Rc$ is the Ricci tensor
\item $R_g=R$ is the scalar curvature
\item $\frac{\partial}{\partial t}g=g'$ is the time derivative of the metric
\item $\text{grad }\mathcal{F}_{g}$ is the gradient of the functional $\mathcal{F}_{g}$ with respect to $g$ (cf. \cite[Chapter 4, 4.10 Definition, p. 119]{besse2007einstein})
\item $\mathring{Rc_g}=\mathring{Rc}$ is the traceless Ricci tensor, i.e.: $\mathring{Rc_g}=Rc_g-\frac{1}{n}R g$
\item $^g \nabla T$=$\nabla T$ is the covariant derivative of a tensor $T$ with respect to g
\item $^g \nabla^m T$=$\nabla^m T$ is the covariant derivative of order $m$
\item $\langle T,S \rangle_g$=$\langle T,S \rangle$ is the inner product of two tensors 
\item $|T|_g=|T|$ is the norm of a tensor, i.e. $|T|_g:=\sqrt{\langle T,T\rangle}_{g}$
\item $diam_g(\cdot)=diam(\cdot)$ is the diameter of a set in a Riemannian manifold
\item $inj_{g}(M,x)$ is the injectivity radius in a point of a Riemannian manifold 
\item $inj_{g}(M)$ is the injectivity radius of a Riemannian manifold 
\item $d_g(x,y)=d(x,y)$ is the distance between the points $x$ and $y$ in a Riemannian manifold
\item $B_d(x,r)=B(x,r)$ is the ball of radius $r>0$ around $x$ in a metric space
\item $d_g$ is the metric which is induced by a Riemannian metric $g$
\item $B_g(x,r)=B_{d_g}(x,r)$ is a metric ball in a Riemannian manifold 
\item $d(x,y,t)$ is the distance between the points $x$ and $y$ in a Riemannian manifold $(M,g(t))$
\item $L(\gamma,t)$ is the length of a curve $\gamma$ in a Riemannian manifold $(M,g(t))$
\item The notation $d\sigma$, which occurs in an integral like $ \int_{\gamma}{ \left|\text{grad }\mathcal{F}\right|\, d\sigma  }$, refers
to the integration with respect to arc length
\item $D(\gamma(t),r)$ / $D(\gamma,r)$ is a normal disc around a point in a curve $\gamma$ / a (normal) tube around a curve $\gamma$ with radius $r$ (cf. Definition \ref{tubedefi})
\item $f_k(x,g)$ / $f_k(M,g)$ is introduced in Definition \ref{fk}
\item $d \pi$ denotes the push forward and $|d \pi |$ denotes the operator norm of the push forward of the projection map in the context of Theorem \ref{tube}
\item $\Gamma$ denotes the local bilinear form in Definition \ref{Gamma}, $|\Gamma|$ is the norm of this bilinear form which is also introduced in Definition \ref{Gamma}
\end{itemize}

The following definition is based on \cite[(1), p. 261]{kaul1976schranken}
\begin{defi}\label{Gamma}
Let $(M^n,g)$ be a smooth Riemannian manifold $p\in M$, $U\subseteq M$ a star-shaped neighborhood around $p$, and $\varphi: U\longrightarrow V$
a normal chart centered at $p$, then for each $q\in U$ we define a symmetric, bilinear map $\Gamma $ as follows:
\begin{align*}
\Gamma : T_q M\times T_q M&\longrightarrow T_q M \\
(u,v)&\mapsto \Gamma_{ij}^k u^i v^i \partial_k
\end{align*}
and $|\Gamma|$ is defined to be the smallest value $C>0$ so that
\begin{align*}
|\Gamma(u,v)|_g\leq C |u|_g |v|_g
\end{align*}
for all $u,v\in T_p M$.
\end{defi}

\bibliographystyle{alphaurl}
\bibliography{Conv4mani}

\end{document}